\newcommand{\erase}[1]{}
\theoremstyle{theorem}
\newtheorem{theorem}{Theorem}[section]
\newtheorem{lemma}[theorem]{Lemma}
\newtheorem{proposition}[theorem]{Proposition}
\newtheorem{corollary}[theorem]{Corollary}
\theoremstyle{definition}
\newtheorem{definition}[theorem]{Definition}
\newtheorem{remark}[theorem]{\it Remark}
\newtheorem{example}[theorem]{Example}
\numberwithin{equation}{section}
\numberwithin{table}{section}
\numberwithin{figure}{section}
\renewcommand{\qed}{\hfill {$\Box$}}
\newcommand{\C}{\mathord{\mathbb C}}
\newcommand{\F}{\mathord{\mathbb F}}
\renewcommand{\P}{\mathord{\mathbb  P}}
\newcommand{\Q}{\mathord{\mathbb  Q}}
\newcommand{\R}{\mathord{\mathbb R}}
\newcommand{\Z}{\mathord{\mathbb Z}}
\newcommand{\AAA}{\mathord{\mathcal A}}
\newcommand{\CCC}{\mathord{\mathcal C}}
\newcommand{\FFF}{\mathord{\mathcal F}}
\newcommand{\III}{\mathord{\mathcal I}}
\newcommand{\LLL}{\mathord{\mathcal L}}
\newcommand{\MMM}{\mathord{\mathcal M}}
\newcommand{\OOO}{\mathord{\mathcal O}}
\newcommand{\PPP}{\mathord{\mathcal P}}
\newcommand{\SSS}{\mathord{\mathcal S}}
\newcommand{\TTT}{\mathord{\mathcal T}}
\newcommand{\UUU}{\mathord{\mathcal U}}
\newcommand{\VVV}{\mathord{\mathcal V}}
\newcommand{\XXX}{\mathord{\mathcal X}}
\newcommand{\ZZZ}{\mathord{\mathcal Z}}
\newcommand{\SSSS}{\mathord{\mathfrak S}}
\newcommand{\maprightsp}[1]{\; \smash{\mathop{\; \longrightarrow \; }\limits\sp{#1}}\; }
\newcommand{\mapdown}{\phantom{\Big\downarrow}\hskip -8pt \downarrow}
\newcommand{\mapdownsurj}{
\hbox{$\bigm\downarrow$}
\llap{\hbox{\raise 2pt\hbox{$\bigm\downarrow$}}}%
\vstrechmapdown
}
\newcommand{\mapupsurj}{
\hbox{$\bigm\uparrow$}
\llap{\hbox{\raise 2pt\hbox{$\bigm\uparrow$}}}%
\vstrechmapup
}
\newcommand{\inj}{\hookrightarrow}
\newcommand{\isom}{\raise -1pt \hbox{\,$\xrightarrow{\sim\,}$\,}}
\newcommand{\set}[2]{\{\, {#1} \,|  \,{#2}\,   \}}
\newcommand{\sethd}[3]{\left\{\;\;  {#1}\;\; \left|\;\;  \vcenter{\hbox{\parbox{#2}{#3}}}\;\;  \right. \right\}}
\newcommand{\bigset}[2]{\left\{\; {#1} \; \left\vert \; {#2} \;  \right.\right \}}
\newcommand{\tensor}{\otimes}
\newcommand{\sprime}{\sp\prime}
\newcommand{\spprime}{\sp{\prime\prime}}
\newcommand{\sptimes}{\sp{\times}}
\newcommand{\sperp}{\sp{\perp}}
\newcommand{\dual}{\sp{\vee}}
\newcommand{\inv}{\sp{-1}}
\newcommand{\bdr}{\partial\,}
\newcommand{\PGL}{\mathord{\mathrm{PGL}}}
\newcommand{\OG}{\mathord{\mathrm{O}}}
\newcommand{\Image}{\operatorname{\mathrm{Im}}\nolimits}
\newcommand{\Gal}{\operatorname{\mathrm{Gal}}\nolimits}
\newcommand{\pione}{\pi_1}
\newcommand{\intf}[1]{\langle #1 \rangle}
\newcommand{\barL}{\overline{L}}
\newcommand{\tilL}{\widetilde{L}}
\newcommand{\barC}{\overline{C}}
\newcommand{\barl}{\bar{l}}
\newcommand{\barc}{\bar{c}}
\newcommand{\Rats}{\mathord{\mathit{Rats}}}
\newcommand{\Ells}{\mathord{\mathit{Ells}}}
\newcommand{\GO}{\mathord{\mathrm{GO}}}
\newcommand{\disjsevenset}{L^{\{7\}}}
\newcommand{\disjsevenlist}{L^{[7]}}
\newcommand{\disjsevenlistfam}{\LLL^{[7]}}
\newcommand{\bl}{\mathord{\mathrm{bl}}}
\newcommand{\LL}{\mathord{\mathbf{L}}}
\newcommand{\PP}{\mathord{\mathbf{P}}}
\newcommand{\PPseven}{\PP^{[7]}}
\newcommand{\gens}[1]{\langle #1\rangle}
\newcommand{\diag}{\mathord{\mathrm{diag}}}
\newcommand{\SX}{S\hskip -.9ptX}
\newcommand{\SY}{SY}
\newcommand{\SSSY}{\mathrm{\mathcal{SY}}}
\newcommand{\barSY}{\overline{SY}}
\newcommand{\barSSSY}{\overline{\SSSY}}
\newcommand{\tilh}{\tilde{h}}
\newcommand{\till}{\tilde{l}}
\newcommand{\tilc}{\tilde{c}}
\newcommand{\tilv}{\tilde{v}}
\newcommand{\tilgamma}{\tilde{\gamma}}
\newcommand{\barN}{\overline{N}}
\newcommand{\barR}{\overline{R}}
\newcommand{\barF}{\overline{F}}
\newcommand{\barPhi}{\overline{\Phi}}
\newcommand{\barFFF}{\overline{\FFF}}
\newcommand{\barLLL}{\overline{\LLL}}
\newcommand{\barTTT}{\overline{\TTT}}
\newcommand{\barCCC}{\overline{\CCC}}
\newcommand{\barNsprime}{\overline{N}^{\,\lower 1pt \hbox{\scriptsize $\prime$}}}
\newcommand{\barPPP}{\overline{\PPP}}
\newcommand{\barSigma}{\overline{\Sigma}}
\newcommand{\typeI}{\mathord{\mathrm{I}}}
\newcommand{\typeII}{\mathord{\mathrm{II}}}
\newcommand{\typeIII}{\mathord{\mathrm{III}}}
\newcommand{\intfX}[1]{\intf{#1}_X}
\newcommand{\intfY}[1]{\intf{#1}_Y}
\newcommand{\spmn}{^{(m,n)}}
\newcommand{\PhiUUU}{\Phi_{\hskip .3pt\UUU}}
\newcommand{\barPhiUUU}{\barPhi_{\hskip .3pt\UUU}}
\newcommand{\iotaUUU}{\iota_{\hskip .3pt\UUU}}
\newcommand{\Qtype}{\mathord{\mathcal{Q}}}
\newcommand{\Qmn}{\Qtype^{(m,n)}}
\newcommand{\UUUR}{\UUU_{\,\R}}
\newcommand{\UUURf}{\UUU_{\,\R, 4}}
\newcommand{\wtAtwo}{\setlength\unitlength{.2truecm}%
\begin{picture}(4.7,1.7)(0,0)%
\put(.9,.5){\circle{.7}}%
\put(3.9,.5){\circle{.7}}%
\put(1.25,.66){\line(1,0){2.3}}%
\put(1.25,.34){\line(1,0){2.3}}%
\end{picture}}%
\newcommand{\Pic}{\mathord{\mathrm{Pic}}}
\begin{document}
\title[Quartic curves]
{Zariski multiples associated with quartic curves}

\author[I. Shimada]{Ichiro Shimada}
\address{Department of Mathematics, Graduate School of Science, Hiroshima University,
1-3-1 Kagamiyama, Higashi-Hiroshima, 739-8526 JAPAN}
\email{ichiro-shimada@hiroshima-u.ac.jp}

\begin{abstract}
We investigate Zariski multiples  of 
plane curves $Z_1, \dots, Z_N$ 
such that each $Z_i$ is a union of a smooth quartic curve, some of its bitangents,
and  some of its $4$-tangent conics.
We show that,
for plane curves of this type,
the deformation types are equal to
the homeomorphism  types,
and that the number of deformation types grows 
as $O(d^{\,62})$ when the degree $d$ of the plane curves tends to infinity.
\end{abstract}

\subjclass[2020]{14H50,  14J26}
\thanks{This work was supported by JSPS KAKENHI Grant Number~20H01798~and~20K20879.}


\maketitle
%
\section{Introduction}\label{sec:Introduction}
By a \emph{plane curve},
we mean a reduced, possibly reducible, complex projective plane curve. 
We say that two plane curves $C$ and $C\sprime$ have
the same \emph{combinatorial type}
if there exist tubular neighbourhoods  $T(C)$ of $C$ 
and $T(C\sprime)$ of $C\sprime$  
such that $(T(C), C)$ and $(T(C\sprime), C\sprime)$ are homeomorphic,
whereas we say that $C$ and $C\sprime$ have
the same \emph{homeomorphism type}
if $(\P^2, C)$ and $(\P^2, C\sprime)$ are homeomorphic.
\par
A \emph{Zariski $N$-ple} is a set of plane curves
$\{C_1, \dots, C_N\}$ 
such that 
the curves $C_i$ have the same combinatorial type,
but their homeomorphism types are pairwise different.
The notion of  Zariski $N$-ples was introduced by Artal-Bartolo~\cite{Artal1994}
in reviving a classical example of $6$-cuspidal curves of degree $6$ due to Zariski.
Since then, 
this notion has been studied by many people from various points of view.
Some of the tools that have been used  in this investigation are:
Alexander polynomials, characteristic varieties,   fundamental groups of complements, 
topological invariants of  branched coverings, and so on. 
See the survey paper~\cite{Survey2008}.
Recently,  
Bannai et al.~\cite{Bannai2016, BannaiOhno2020, BannaiEtAl2019} have investigated Zariski $N$-ples
such that each member is a union of a smooth quartic curve and some of its bitangents.
\par
In this paper,
we introduce  $4$-tangent conics of a smooth quartic curve,
and consider 
Zariski $N$-ples $Z_1, \dots, Z_N$ 
such that each $Z_i$ is a union of a  smooth quartic curve, some of its bitangents,
and  some of its $4$-tangent conics. 
\par
Let  $Q$ be a smooth quartic curve.
A \emph{bitangent} of $Q$ is a line whose intersection multiplicity 
with $Q$ is even at each intersection point.
It is well known that every smooth quartic curve has 
exactly $28$ bitangents.
We say that a bitangent $\barl$ of $Q$ is \emph{ordinary}
if $\barl$ is tangent to $Q$ at distinct $2$ points. 
A smooth conic $\barc\subset \P^2$ is called a \emph{$4$-tangent conic of $Q$} 
if $\barc$ is tangent to $Q$ at $4$ distinct points.
Every smooth quartic curve has $63$ one-dimensional connected families of $4$-tangent conics
(see Theorem~\ref{thm:barCCC}).
\begin{definition}\label{def:Qmn}
Let  $m$ and $n$ be non-negative integers such that $m\le 28$.
We say that  a plane curve $Z$ is a \emph{$\Qmn$-curve} 
if $Z$ is of the form 
\begin{equation}\label{eq:Z}
Z=Q +\barl_1+\cdots +\barl_m+\barc_1+\cdots +\barc_n,
\end{equation}
where $Q$ is a smooth quartic curve, 
$\barl_1, \dots, \barl_m$ are distinct bitangents of  $Q$, and 
$\barc_1, \dots, \barc_n$ are distinct $4$-tangent conics of $Q$,
and they satisfy that 
\begin{enumerate}[{\rm (i)}]
\item  the bitangents $\barl_1, \dots, \barl_m$ are ordinary, 
\item the intersection of any  three of $Q, \barl_1, \dots, \barl_m, \barc_1, \dots, \barc_n$ is empty,
and 
\item the intersection of any  two of $\barl_1, \dots, \barl_m, \barc_1, \dots, \barc_n$ is transverse.
\end{enumerate}
\end{definition}
It is obvious that any two  $\Qmn$-curves have 
the same combinatorial type.
We construct a non-singular variety $\ZZZ\spmn$ 
parameterizing all $\Qmn$-curves in Section~\ref{sec:ZZZ}.
\begin{definition}\label{def:deftype}
We say that two $\Qmn$-curves
have the same \emph{deformation type}
if they belong to the same connected component of
the parameter space $\ZZZ\spmn$. 
\end{definition}
It is obvious that  $\Qmn$-curves of  the same deformation type
have the same homeomorphism type.
Our main results are the following:
\begin{theorem}\label{thm:main}
If two $\Qmn$-curves 
have the same homeomorphism type,
then they have the same deformation type.
\end{theorem}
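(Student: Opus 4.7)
The strategy is to extract from each $\Qmn$-curve $Z$ a lattice-theoretic invariant $\Sigma(Z)$ that controls both the deformation type and the homeomorphism type. The theorem then follows from three properties: (i) $\Sigma$ is constant on connected components of $\ZZZ\spmn$; (ii) if $\Sigma(Z)\cong \Sigma(Z\sprime)$, then $Z$ and $Z\sprime$ lie in the same component of $\ZZZ\spmn$; (iii) $\Sigma(Z)$ is determined by the homeomorphism type of $(\P^2, Z)$.

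I would take $\Sigma(Z)$ to come from the double cover $\pi_Z\colon Y_Z \to \P^2$ branched along the smooth quartic component $Q$. This $Y_Z$ is a del Pezzo surface of degree $2$, on which each ordinary bitangent $\barl_i$ splits into a pair of disjoint $(-1)$-curves, and each $4$-tangent conic $\barc_j$ splits into a pair of smooth rational curves of self-intersection $0$ meeting in $4$ points. Let $\Sigma(Z)$ denote the sublattice of $\mathrm{Pic}(Y_Z)$ spanned by the hyperplane pullback and by these split components, together with a labelling recording the provenance of each generator (i.e., which bitangent or $4$-tangent conic it lifts). Property (i) is automatic by parallel transport in the smooth family of double covers over $\ZZZ\spmn$. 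For (iii), the cover $\pi_Z$ is a topological invariant of $(\P^2, Z)$, since it is determined by the unique surjection $\pi_1(\P^2 \setminus Q) \to \mathbb{Z}/2$ sending the meridian of $Q$ to the generator; any homeomorphism of pairs lifts to the double covers and induces a marked-lattice isometry that preserves intersection numbers and component labels.

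Property (ii) is the main obstacle. One needs to show that the monodromy of the universal family of smooth quartics, acting on the joint configuration of the $28$ bitangents and the $63$ families of $4$-tangent conics, realizes the full Weyl group $W(E_7)$ of the Picard lattice of $Y_Z$, with orbits exactly the isomorphism classes of marked $\Sigma$-data. The action on the bitangents alone realizing $W(E_7)$ is classical, so the work lies in identifying the $4$-tangent conic families with specific elements of the $E_7$ root/weight structure on $\mathrm{Pic}(Y_Z)$ and confirming compatible transitivity. Once this is established, two $\Qmn$-curves with isomorphic $\Sigma$ can be connected by first moving the quartic along a loop realizing the required Weyl group element, and then isotoping within the fiber of permissible bitangent/conic choices — this latter fiber is open and connected thanks to the transversality conditions of Definition~\ref{def:Qmn}.
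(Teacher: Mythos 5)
Your overall strategy --- pass to the degree-$2$ del Pezzo double cover $Y\to\P^2$, read off a marked lattice from the split components, and use the $W(E_7)$ monodromy of smooth quartics to convert isomorphism of that data into a path in $\ZZZ\spmn$ --- is the same as the paper's. But there is a genuine gap in your step (ii), caused by how the invariant is defined. You take $\Sigma(Z)$ to be the \emph{sublattice} of $\mathrm{Pic}(Y)$ spanned by $h$ and the split components, considered up to label-preserving isomorphism of abstract lattices. Since the two $Y$-lifts of a bitangent have classes summing to $h$, for $n=0$ this datum is exactly the two-graph of the chosen bitangents (the Gram matrix of the lifts, taken modulo the switchings $l_i\leftrightarrow l_i'$, is a switching class of graphs), and Section~\ref{subsec:m6n0} of the paper exhibits two $W(E_7)$-orbits $o_1,o_2$ in $P_b^{(6,0)}$ with identical two-graphs that nevertheless lie in different connected components of $\ZZZ^{(6,0)}$ and have different homeomorphism types. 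So ``$\Sigma(Z)\cong\Sigma(Z')$ implies same deformation type'' is false as stated, and the whole chain breaks at its middle link. What a homeomorphism of pairs actually provides, after lifting to the double covers, is an isometry of the \emph{full} unimodular lattice $H^2(Y,\Z)\cong I_{1,7}$ carrying $h$ to $h$ and the configuration of classes to the configuration of classes; the embedding of the configuration into the ambient lattice is essential extra information that the abstract sublattice forgets. The correct invariant is therefore the $W(E_7)$-orbit of the configuration (the paper's condition (iii) in Section~\ref{subsec:Homeotypes}), and with that replacement your step (ii) becomes the paper's implication (iii)$\Rightarrow$(ii), which does go through --- by Harris's theorem (Theorem~\ref{thm:monodromy}) together with the smoothness and fiber-connectedness of $\theta\colon\ZZZ\spmn\to\PPP\spmn$ (Lemma~\ref{lem:smooth}), the latter resting on the nontrivial fact that each family of $4$-tangent conics is a connected open subset of $\P^1$ (Theorem~\ref{thm:barCCC}).

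A second point you elide: a homeomorphism of pairs may act as $-1$ on $H^2(\P^2,\Z)$, in which case its lift to the double covers sends the fundamental classes of the split components to the \emph{negatives} of classes of components on the other side, so it does not directly ``preserve intersection numbers and component labels'' in the form needed. The paper devotes Section~\ref{subsec:real} to this: it constructs an orientation-reversing self-homeomorphism of every $\Qmn$-curve from a real quartic with four ovals (Zeuthen--Klein), checking via Lemma~\ref{lem:tausamefam} that complex conjugation preserves each connected family of $4$-tangent conics, and thereby reduces to orientation-preserving homeomorphisms. Some such device (or an explicit composition with $-\mathrm{id}$ on $H^2(Y,\Z)$, which is an isometry and repairs all the signs at once) is needed before the induced map can be fed into the monodromy argument.
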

We put
\begin{equation}\label{eq:dmn}
d\spmn:=\binom{28}{m}\cdot \binom{n+62}{62},
\end{equation}
which grows  as  $O( n^{62})$ when  $n\to \infty$.
\begin{theorem}\label{thm:main2}
The number $N\spmn$ of
deformation types of $\Qmn$-curves
satisfies
\begin{equation}\label{eq:estimate}
d\spmn/1451520 \le N\spmn \le d\spmn.
\end{equation}
\end{theorem}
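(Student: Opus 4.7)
The plan is to exhibit $\ZZZ\spmn$ as a family $\rho\colon \ZZZ\spmn\to U$ over the locus $U$ of smooth quartics in $|\mathcal{O}_{\P^2}(4)|$, and to identify its connected components (equivalently, the deformation types of $\Qmn$-curves) with orbits of the monodromy group $\pi_1(U,Q)$ acting on a combinatorial fiber of $\rho$. The map $\rho$ sends a $\Qmn$-curve to its quartic component $Q$.

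First I would enumerate the connected components of a single fiber $\rho^{-1}(Q)$. A point of $\rho^{-1}(Q)$ consists of an unordered choice of $m$ distinct bitangents of $Q$ together with an unordered choice of $n$ distinct $4$-tangent conics. By Theorem~\ref{thm:barCCC}, the $4$-tangent conics form $63$ irreducible one-parameter families, each isomorphic to $\P^1$. Within a single family, the configuration space of $n_i$ unordered distinct conics is an open subset of $\mathrm{Sym}^{n_i}(\P^1)\cong\P^{n_i}$, hence connected; a product of connected varieties over different families is again connected. Consequently the components of $\rho^{-1}(Q)$ are indexed by pairs $(B,(n_1,\dots,n_{63}))$, where $B$ is an $m$-subset of the $28$ bitangents and $n_i\ge 0$ with $\sum n_i=n$. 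The total number of such pairs is $\binom{28}{m}\binom{n+62}{62}=d\spmn$, which immediately yields $N\spmn\le d\spmn$.

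Next I would control the monodromy. Classical theta-characteristic theory on a smooth plane quartic (a genus-$3$ curve with canonical class $K_Q=H$) identifies the $28$ bitangents with the $28$ odd theta characteristics in $J(Q)$, while the $63$ families of $4$-tangent conics correspond to the $63$ nonzero elements of $J(Q)[2]\cong\mathbb{F}_2^6$: to $\tau\neq 0$ one associates the pencil $|H+\tau|$, each of whose effective divisors is cut out by a unique $4$-tangent conic. The topological monodromy of $\pi_1(U,Q)$ on $J(Q)[2]$ preserves both the Weil pairing and the parity of theta characteristics, so it factors through the symplectic group $\mathrm{Sp}(6,\mathbb{F}_2)$, whose order equals $1451520$.

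Finally, the lower bound follows by orbit counting: every monodromy orbit on the $d\spmn$-element fiber has cardinality at most $|\mathrm{Sp}(6,\mathbb{F}_2)|=1451520$, whence $N\spmn\ge d\spmn/1451520$. The main obstacle is the second step: namely giving a canonical and simultaneously monodromy-equivariant identification of the $28$ bitangents and the $63$ families of $4$-tangent conics with the odd theta characteristics and with $J(Q)[2]\setminus\{0\}$ respectively, so that both actions factor through a single action on $J(Q)[2]$. Once this compatibility is in place, the two inequalities reduce to Lagrange's theorem and direct enumeration.
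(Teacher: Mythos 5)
Your proposal follows the same two-step skeleton as the paper's proof: identify the deformation types with orbits of the quartic monodromy on a $d\spmn$-element combinatorial set, then bound the orbit sizes by the order of the finite group through which the monodromy acts. The difference is in the bookkeeping of that finite group. The paper works on the degree-$2$ del Pezzo double cover $Y_u\to\P^2$: the $63$ conic families are the $126$ roots of $\Sigma_u\cong E_7$ modulo the deck involution, the bitangents are the $(-1)$-classes modulo the same involution, and the monodromy lands in $\OG(\SY_b,h_b)\cong W(E_7)$, which acts on the relevant sets through $W(E_7)/\{\pm 1\}$ of order $1451520$. You instead use $\Pic^0(Q_b)[2]$ with the Weil pairing, odd theta characteristics for the bitangents, and $\mathrm{Sp}_6(\F_2)$; since $W(E_7)/\{\pm 1\}\cong\mathrm{Sp}_6(\F_2)$ and the paper's own Remark~\ref{rem:Pic} records the bijection $\barF_u\cong\Pic^0(Q_u)[2]\setminus\{0\}$, the two pictures are equivalent, and the ``main obstacle'' you flag is not really one: both identifications are canonical in $Q$, hence automatically monodromy-equivariant. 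You also correctly observe that the lower bound needs only an \emph{upper} bound on the order of the group through which the monodromy factors, not Harris's surjectivity (the paper cites Theorem~\ref{thm:monodromy} here, but surjectivity is essential only for Theorem~\ref{thm:main} and for the exact values of $N\spmn$).

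Two technical points in your first step need shoring up, and one of them matters for the lower bound. First, a point of $\rho\inv(Q)$ is not merely a choice of $m$ bitangents and $n$ distinct $4$-tangent conics: Definition~\ref{def:Qmn} also imposes the genericity conditions (i)--(iii), so the stratum of $\rho\inv(Q)$ with a given combinatorial type $(B,(n_1,\dots,n_{63}))$ is only a Zariski-open subset of the product you describe. For the upper bound this is harmless, but for the lower bound you must show that every one of the $d\spmn$ types is actually realized by a nonempty stratum, since otherwise the orbit count runs over a smaller set. The paper proves this (Lemma~\ref{lem:smooth}) by showing that each conic family is a base-point-free pencil (Proposition~\ref{prop:nobasepts}) and invoking Bertini. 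Second, identifying $\pi_0(\ZZZ\spmn)$ with monodromy orbits on $\pi_0(\rho\inv(Q))$ requires that every component of $\ZZZ\spmn$ dominate $\UUU$ and a local-triviality statement for $\pi_0$ of the fibers; the paper obtains both cleanly by factoring through the finite \'etale cover $\PPP\spmn\to\UUU$ via the smooth, dominant morphism $\theta$ with irreducible fibers. With these two points supplied, your argument is correct.
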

The main ingredient of the proof of these results is 
the monodromy argument of Harris~\cite{Harris1979} (see Theorem~\ref{thm:monodromy}).
This argument converts the problem of enumerating deformation types of $\Qmn$-curves 
to an easy combinatorial problem of counting orbits of an action of 
the Weyl group $W(E_7)$ 
on a certain finite set. 
In Tables~\ref{table:Nm0},~\ref{table:N0n},~\ref{table:Nmn}, we give a list of $N\spmn$
for some $(m, n)$.
See~Section~\ref{subsec:computationalexample} for more detail.
\begin{table}
{\small
\[
\begin{array}{c|cccccccccccccc}
m & 1 & 2 & 3 & 4 & 5 & 6 & 7 &8 &  9 & 10 & 11 & 12 & 13 & 14 \\
\hline 
N &  1 &1 & 2 &3 &5 & 10&16 &23& 37& 54&70& 90& 101& 103 \\
G & 1 &1 & 2 &3 &5 & 9&16 &23& 37& 54&70& 90& 101& 103
\end{array}
\]
\caption{$N^{(m, 0)}=N^{(28-m, 0)}$}\label{table:Nm0}
\vskip -3pt 
\[
\begin{array}{c|cccccccccc}
n & 1 & 2 & 3 & 4 & 5 & 6 & 7 &8  &9 &10\\
\hline 
N &  1& 3& 9& 30&112&501& 2483& 13791 &81404 &490750\\
G & 1 & 3 & 7 & 22 & 71 & 306 & 1585 & 9831 & 64790 & 425252
\end{array}
\]
}
\caption{$N^{(0, n)}$}\label{table:N0n}
%
\vskip -3pt 
{\small
\[
\begin{array}{c|cccccccccc}
(m, n) & (1,1) & (1,2) & (2,1) & (1,3) &(2,2) &(3,1)& (1,4) &(2,3) &(3,2) &(4,1)  \\
\hline 
N &  2& 8& 4 &33 & 23 & 9 &162 & 132 &66 & 20 \\
G &2 & 8 & 3 & 30 & 17 & 8 & 140 & 95 & 57 & 17 
\end{array}
\]
\vskip 2pt
\[
\begin{array}{c|ccccccccccc}
(m, n) & (1,5) &(2,4) &(3,3)&(4,2) &(5,1)  &(1, 6)  &(2, 5) &(3,4) &(4,3) &(5, 2) &(6,1)\\
\hline 
N &  901  &889 &508 &190 &45 &5674 &6503 &4348 &1854  &531 &103\\
G &  753& 670 & 430 & 164 & 42 & 4829 & 5259 & 3812  & 1649  & 501&96
\end{array}
\]
}
\caption{$N^{(m, n)}$}\label{table:Nmn}
\end{table}
\par
To each $\Qmn$-curve $Z$,
we associate a discrete invariant $g(Z)$,
which we call an \emph{intersection graph}.
This invariant is  similar to the \emph{splitting graph} defined in~\cite{Shirane2019}.
Note that  each of the bitangents $\barl_1, \dots, \barl_m$ and a  $4$-tangent conics 
$\barc_1, \dots, \barc_n$ of the smooth quartic curve $Q\subset Z$
splits by the double covering $Y\to \P^2$ branched along  $Q$.
This data $g(Z)$ describes 
how the irreducible components of these pull-backs intersect on $Y$.  
See Section~\ref{sec:intgraph} for the precise definition.
In Tables,
we also present 
the number $G$ of non-isomorphic intersection graphs
obtained from $\Qmn$-curves.
When $n=0$, the intersection graph $g(Z)$ is the two-graph studied in~\cite{BannaiOhno2020},
in which Bannai and Ohno studied  $\Qtype^{(m, 0)}$-curves  for $m\le 6$,
and enumerated their homeomorphism types that can be distinguished by the two-graphs.
See Sections~\ref{subsec:m6n0}~and~\ref{subsec:n0} for the details. 
\par
The $4$-tangent conics of a smooth quartic curve $Q$
are related to the $2$-torsion points 
of the Jacobian of $Q$  (see Remark~\ref{rem:Pic}).
A similar idea applied to plane cubic curves
enabled us to  construct in~\cite{Shimada2003} 
certain equisingular families of plane curves with  many connected components.
In~\cite{Shimada2003}, it was also shown that 
these connected components cannot be distinguished by the fundamental groups of the complements,
because they are all ablelian. 
Then it was shown in~\cite{GBS2017} and~\cite{Shirane2017} 
that  the homeomorphism types of distinct connected components of  these families 
can be distinguished by the invariant called \emph{linking numbers}.
\par
The embedding topology of 
 reducible plane curves whose irreducible components are tangent to each other  was also  investigated by 
 Artal Bartolo,  Cogolludo,  and Tokunaga in~\cite{ABCT2008} from the view point of dihedral covering of the plane
 branched along the curve.
In this case, the complement can have  a non-abelian fundamental group.
See~\cite[Corollary 1]{ABCT2008}.  
 \par
It would be  an interesting problem to study the fundamental groups of the complements of $\Qtype^{(m, n)}$-curves,
and their related invariants such as linking numbers and/or  (non-)existence of finite coverings of the plane with prescribed Galois groups.
\par
Via the cyclic covering of the plane of degree $4$ branched along a smooth quartic curve,
the geometry of $\Qtype^{(m, n)}$-curves is  related to the geometry of $K3$ surfaces.
By considering 
the double  covering  branched along a singular sextic curve and employing Torelli theorem for complex $K3$ surfaces,
we have investigated in~\cite{Shimada2010}  Zariski $N$-ples of plane curves of degree $6$ with only simple singularities.
We expect that a similar idea can be applied to Zariski $N$-ples
associated with  singular quartic curves.
\par
This paper is organized as follows.
In Section~\ref{sec:split},
we introduce cyclic  coverings $X_u\to Y_u\to \P^2$
branched over a smooth  quartic curve $Q_u$,
and fix some notation.
In Section~\ref{sec:monodromy},
we recall  the  result of Harris~\cite{Harris1979}.
In Section~\ref{sec:familyofconics},
we construct the family of $4$-tangent conics.
In Section~\ref{sec:ZZZ},
we construct the space $\ZZZ\spmn$ 
parameterizing all $\Qmn$-curves,
and prove Theorems~\ref{thm:main}~and~\ref{thm:main2}.
In Section~\ref{sec:familyofconics2},
we further study the family of $4$-tangent conics in detail.
The geometry of the $K3$ surface $X_u$ is closely investigated.
In Section~\ref{sec:config},
we study the configurations of lifts in $Y_u$ of 
bitangents and $4$-tangent conics,
and we define the intersection graph $g(Z)$ in 
Section~\ref{sec:intgraph}.
In Section~\ref{sec:examples},
we examine some examples for small $m$ and $n$.
\par
\medskip
{\bf Acknowledgement.}
Thanks are due to  Professor Shinzo Bannai and Professor Taketo Shirane for discussions and comments.
The author also thanks the anonymous referee for his/her valuable comments on the first version of this paper.
\section{Coverings of $\P^2$}\label{sec:split}
For a positive integer $d$,
we put
\[
\Gamma(d):=H^0(\P^2, \OOO(d)).
\]
Let $\UUU$ denote the space of smooth quartic curves,
which is a Zariski open subset of $\P_*(\Gamma(4))$.
Let $u$ be a point of $\UUU$.
We denote 
by $Q_u\subset \P^2$ the smooth quartic curve corresponding to the point $u$.
We consider the following  branched coverings:
\[
\gamma_u \colon X_u\maprightsp{\eta_u}  Y_u  \maprightsp{\pi_u}  \P^2,
\]
where $\pi_u$ is the double covering of $\P^2$ branched along $Q_u$, 
$\eta_u$ is  the double covering of $Y_u$ branched along $\pi_u\inv (Q_u)$, and 
 $\gamma_u=\pi_u\circ \eta_u$ is the cyclic covering of degree $4$   of $\P^2$ branched along $Q_u$.
We put
\[
\SY_u:=H^2(Y_u, \Z),
\]
which is a unimodular lattice of rank $8$
with the cup-product $\intfY{\phantom{i}, \phantom{i}}$.
Let $h_u\in \SY_u$ be  the class of the pull-back of a line on $\P^2$ by $\pi_u$.
It is well known that 
$Y_u$ is a del Pezzo surface of degree $2$
with the anti-canonical class $h_u$.
(See~\cite[Chapters~6 and~8]{DolgachevBook} about del Pezzo surfaces.)
On the other hand,  the surface $X_u$ is a $K3$ surface.
Let $\intfX{\phantom{i}, \phantom{i}}$ denote the cup product of $H^2(X_u, \Z)$, and 
let $\tilh_u$ be the class $\eta_u^*(h_u)$.
Then $\tilh_u$ is an ample class of degree $\intfX{\tilh_u, \tilh_u}=4$.
\par
It is classically known that 
every  smooth quartic curve $Q_u$ has exactly $28$ bitangents.
Moreover, if $u$ is general in $\UUU$,
all bitangents $l$ of $Q_u$  are ordinary, that is,
$l$ is tangent to $Q_u$ at distinct two points. 
\begin{definition}
A reduced conic $\barc\subset \P^2$ is called a \emph{splitting  conic} of $Q_u$ 
if the intersection multiplicity of $Q_u$ and $\barc$ is even at  each intersection point.
\end{definition}
A smooth conic $\barc$ is splitting
if and only if  $\pi_u^*(\barc)\subset Y_u$ has two irreducible components. 
A singular reduced conic   $\barc$ is splitting
if and only if $\barc$ is a union of two distinct  bitangents. 
\par
It is easy to see that a smooth conic $\barc=\{g=0\}$ 
defined by $g\in \Gamma(2)$ is a splitting conic of $Q_u=\{\varphi=0\}$
defined by $\varphi\in \Gamma(4)$
if and only if there exist polynomials $f\in \Gamma(2)$ and $q \in \Gamma(2)$ such that
$\varphi=fg+q^2$.
By an easy dimension counting,
we see the following:
\begin{lemma}\label{lem:2222or422}
Suppose that $u$ is general in $\UUU$.
Let  $\barc\subset \P^2$ be a smooth splitting conic of $Q_u$.
Then 
the intersection multiplicities of $Q_u$ and $\barc$ are either $(2, 2, 2, 2)$ or $(4, 2,2)$.
\qed
\end{lemma}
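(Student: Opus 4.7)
The plan is to use the parameterization $\varphi = fg + q^2$ just recalled, translate the intersection pattern of $Q_u$ and $\barc$ into the zero pattern of a binary quartic form on $\barc$, and then argue by a dimension count on an appropriate incidence variety.

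First, if $\varphi = fg + q^2$ with $g$ defining $\barc$, then $\varphi|_\barc = (q|_\barc)^2$ in $H^0(\barc, \OOO_\barc(4))$. Since $\barc \cong \P^1$ and $\OOO_\barc(2) = \OOO_{\P^1}(4)$, the restriction $q|_\barc$ is a binary quartic form, and for each $P \in Q_u \cap \barc$ the intersection multiplicity of $Q_u$ and $\barc$ at $P$ equals $\mathrm{ord}_P(\varphi|_\barc) = 2\,\mathrm{ord}_P(q|_\barc)$. Thus the five possible zero patterns $(1,1,1,1), (2,1,1), (2,2), (3,1), (4)$ of $q|_\barc$ correspond bijectively to the five a priori possible intersection patterns $(2,2,2,2), (4,2,2), (4,4), (6,2), (8)$, and it suffices to exclude the last three patterns for $u$ general in $\UUU$.

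Next I consider the incidence variety
\[
I = \{(u, [g]) \in \UUU \times \P_*(\Gamma(2)) : V(g) \text{ is a smooth splitting conic of } Q_u\}
\]
and compute $\dim I = 15$ by parameterizing via the map $(f, g, q) \mapsto (fg + q^2, g)$ from $\Gamma(2)^3$ (dimension $18$) to $\Gamma(4) \times \Gamma(2)$: the generic affine fiber over $(\varphi, g)$ consists of the two sign choices for $q|_\barc$ together with the one-parameter freedom $q \mapsto q + cg$ (which does not change $q|_\barc$), hence has dimension $1$; modulo the two projectivizations we obtain $\dim I = 18 - 1 - 2 = 15$. I then stratify $I$ by the zero pattern $\lambda$ of $q|_\barc$. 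Because the restriction $\Gamma(2) \to H^0(\barc, \OOO_{\P^1}(4))$ is surjective with one-dimensional kernel spanned by $g$, imposing pattern $\lambda$ on $q|_\barc$ is a condition of the same codimension as the corresponding stratum in the projective space $\P H^0(\P^1, \OOO(4)) = \P^4$ of binary quartics; this codimension equals $2, 2, 3$ for $\lambda = (2,2), (3,1), (4)$ respectively.

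Consequently the strata $I_{(2,2)}, I_{(3,1)}, I_{(4)}$ have dimensions at most $13, 13, 12$, and their images under the first projection $I \to \UUU$ are contained in a proper Zariski-closed subset of the $14$-dimensional variety $\UUU$; for any $u$ in the complement every smooth splitting conic of $Q_u$ falls in the generic stratum or in $I_{(2,1,1)}$, yielding intersection pattern $(2,2,2,2)$ or $(4,2,2)$. The one step that requires care is the codimension comparison: one must verify that imposing the pattern $\lambda$ on $q|_\barc$ really cuts out the expected codimension in the parameter space $\Gamma(2)^3$, and that the fiber dimension of the map $(f,g,q) \mapsto (fg+q^2, g)$ does not drop on the bad stratum (it does not, since both sign branches and the $cg$-direction still lie inside the bad locus). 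This is the only delicate point in what is otherwise a routine dimension count.
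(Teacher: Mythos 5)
Your proof is correct and follows essentially the same route as the paper, which justifies this lemma only by the remark ``by an easy dimension counting'' immediately after stating the characterization $\varphi=fg+q^2$ of smooth splitting conics: you carry out precisely that dimension count, via the incidence variety of pairs $(Q_u,\barc)$ and the stratification by the zero pattern of the binary quartic $q|_{\barc}$. The details (fiber dimension of $(f,g,q)\mapsto(fg+q^2,g)$, surjectivity of the restriction $\Gamma(2)\to H^0(\barc,\mathcal{O}_{\barc}(2))$, and the codimensions $2,2,3$ of the bad strata) all check out.
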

\begin{definition}
A smooth splitting conic $\barc\subset \P^2$ of $Q_u$ is called a \emph{$4$-tangent conic} (resp.~a \emph{$3$-tangent conic}) of $Q_u$ 
if the intersection multiplicities of $Q_u$ and $\barc$ are $(2,2,2,2)$ (resp.~$(4,2,2)$).
\end{definition}
\par
The following is easy to verify. 
The results are summarized in Table~\ref{table:pullbacks}. 
\begin{table}
{\small
\[
\renewcommand{\arraystretch}{1.4}
\begin{array}{l | l |l}
\P^2 & Y_u  &X_u \\
\hline 
\barl &\pi_u^*(\barl)= l+l\sprime  &\gamma_u^*(\barl)=\till+\till\sprime \\
&\intfY{l, l}=\intfY{l\sprime, l\sprime}=-1 &\intfX{\till, \till}=\intfX{\till\sprime, \till\sprime}=-2  \\
&\intfY{l, l\sprime}=2 &\intfX{\till, \till\sprime}=4 \\
&\intfY{h_u, l}=\intfY{h_u, l\sprime}=1 &\intfX{\tilh_u, \till}=\intfX{\tilh_u, \till\sprime}=2  \\
\hline
\barc &\pi_u^*(\barc)= c+c\sprime  &\gamma_u^*(\barc)= \tilc+\tilc\sprime \\
&\intfY{c, c}=\intfY{c\sprime, c\sprime}=0 &\intfX{\tilc, \tilc}=\intfX{\tilc\sprime, \tilc\sprime}=0  \\
&\intfY{c, c\sprime}=4 &\intfX{\tilc, \tilc\sprime}=8 \\
&\intfY{h_u, c}=\intfY{h_u, c\sprime}=2 &\intfX{\tilh_u, \tilc}=\intfX{\tilh_u, \tilc\sprime}=4
\end{array}
\]
}
\vskip 3pt
\caption{Pull-backs of bitangents and $4$-tangent conics}\label{table:pullbacks}
\end{table}
\begin{proposition}\label{prop:lifts}
{\rm (1)} 
Let $\barl$ be an ordinary  bitangent of $Q_u$.
Then $\pi_u^*(\barl)$ is a union of two smooth rational curves $l$ and $l\sprime$ on $Y_u$  with self-intersection $-1$ 
that intersect at two points transversely,
and $\gamma_u^*(\barl)$ is a union of two smooth rational curves $\till$ and $\till\sprime$ on $X_u$  with self-intersection $-2$ 
that intersect at two points with intersection multiplicities $(2,2)$.
\par
{\rm (2)} 
Let $\barc$ be a $4$-tangent conic of $Q_u$.
Then $\pi_u^*(\barc)$ is a union of two smooth rational curves $c$ and $c\sprime$ on $Y_u$  with self-intersection $0$ 
that intersect at four points transversely,
and $\gamma_u^*(\barc)$ is a union of two smooth elliptic curves $\tilc$ and $\tilc\sprime$ on $X_u$  with self-intersection $0$ 
that intersect at four points with intersection multiplicities $(2,2,2,2)$.
\qed
\end{proposition}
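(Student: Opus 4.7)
The plan is to treat the two parts in parallel: since the hypotheses ensure that all local intersection multiplicities of $Q_u$ with $\barl$ (resp.\ with $\barc$) are even, both $\pi_u^*(\barl)$ and $\pi_u^*(\barc)$ split as sums $l+l'$ (resp.\ $c+c'$) by the definition of a splitting curve. I would first pin down all intersection numbers on $Y_u$ purely from projection-formula arithmetic, then carry out a short local computation at the branch points to verify smoothness and transversality, and finally pass to $X_u$ via $\eta_u$ using the same two ingredients.

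For the numerics on $Y_u$, let me write out the bitangent case; the conic case is identical in spirit. From $\pi_u^*(\barl)\cdot h_u = 2$ and the invariance of $l,l'$ under the covering involution, I get $h_u\cdot l = h_u\cdot l' = 1$ and $l^2 = (l')^2$. The two tangency points $p_1,p_2\in Q_u\cap\barl$ lie on the branch curve, so set-theoretically $l\cap l'$ consists of the two points $\pi_u^{-1}(p_1),\pi_u^{-1}(p_2)$. A local model $Q_u=\{y^2-xq(x,y)=0\}$, $\barl=\{x=0\}$ at $p_i$ (with $q(0,0)\neq 0$ to keep $Q_u$ smooth) gives $Y_u=\{t^2 = y^2-xq\}$, in which $(y,t)$ are local coordinates and the two lifts become $\{t=\pm y\}$, meeting the branch curve $\{t=0\}$ transversely and meeting each other transversely at a single point. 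This yields $l\cdot l'=2$, hence $(l+l')^2 = 2\barl^2 = 2$ forces $l^2=(l')^2=-1$, and adjunction on the del Pezzo surface $Y_u$ gives $g(l)=0$; smoothness is clear from the local chart. The conic case runs identically: the local model near each simple tangency produces one transverse intersection with $B_u$ and one transverse intersection of $c$ with $c'$, giving the four transverse points and $c\cdot c'=4$, then $c^2=0$ and rationality by adjunction.

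For $X_u$, I use projection formula on $\eta_u$: all intersection numbers in the right column of Table~\ref{table:pullbacks} double those in the middle column, which gives $\till^2$, $\till\cdot\till'$ and $\tilh_u\cdot\till$ for free once we know that $\till$ and $\till'$ are defined as $\eta_u^*(l)$ and $\eta_u^*(l')$ (resp.\ $\eta_u^*(c)$ and $\eta_u^*(c')$) with multiplicity one. To justify that these are genuinely irreducible and to determine their geometric type, I would apply Riemann--Hurwitz to the restricted double cover $\eta_u^{-1}(l)\to l$ (resp.\ $\eta_u^{-1}(c)\to c$): since $l\cdot B_u = l\cdot 2h_u = 2$ and $l$ meets $B_u$ transversely at two distinct points, the restriction is a double cover of $\mathbb{P}^1$ branched at two points, necessarily connected of genus $0$; analogously $c\cdot B_u = 4$ gives a connected double cover of $\mathbb{P}^1$ branched at four points, elliptic.

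The only step requiring care, and the one I would foreground as the main obstacle, is computing the local intersection multiplicity of $\till$ and $\till'$ at a point above $p_i$. Using the coordinates $(y,t)$ on $Y_u$ described above and writing $\eta_u$ locally as $s^2=t$, the two lifts become $\till:\{s^2 = y\}$ and $\till':\{s^2 = -y\}$ in coordinates $(y,s)$ on $X_u$, and the intersection multiplicity is
\[
\dim_{\mathbb{C}} \mathbb{C}[y,s]/(s^2-y,\,s^2+y) = \dim_{\mathbb{C}} \mathbb{C}[y,s]/(s^2,\,y) = 2.
\]
This gives the multiplicity $2$ at each of the two (resp.\ four) intersection points, matching $\till\cdot\till' = 4$ in part~(1) and $\till\cdot\till' = 8$ in part~(2). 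The conic case uses the same local normal form at each of the four tangency points, so the computation is identical.
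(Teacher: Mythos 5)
Your proof is correct. The paper gives no argument for this proposition at all --- it is declared ``easy to verify,'' with the resulting intersection numbers simply recorded in Table~\ref{table:pullbacks} --- and your verification (the pullback splits because every local intersection multiplicity of $Q_u$ with $\barl$ or $\barc$ is even; the projection formula pins down all the intersection numbers once $l\cdot l\sprime$ and $c\cdot c\sprime$ are known; the local model $t^2=y^2-xq$, $s^2=t$ at a tangency point gives transversality on $Y_u$ and the multiplicity-$2$ contacts on $X_u$; and Riemann--Hurwitz applied to $\eta_u^{-1}(l)\to l$ and $\eta_u^{-1}(c)\to c$ identifies $\till$ as rational and $\tilc$ as elliptic) is precisely the computation the author leaves to the reader, with no gaps.
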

\begin{definition}\label{def:lifts}
A curve $l$ on $Y_u$ is called a \emph{$Y$-lift} of 
a bitangent $\barl$ of $Q_u$ if 
$\pi_u$ maps $l$ to $\barl$ isomorphically.
We also say that 
a curve $c$ on $Y_u$ is  a \emph{$Y$-lift} of 
a splitting conic $\barc$ of $Q_u$ if 
$\pi_u$ maps $c$ to $\barc$ isomorphically.
\end{definition}
\section{Monodromy}\label{sec:monodromy}
Let $u$ be a point of $\UUU$.
It is well known 
that the lattice 
$\SY_u=H^2(Y_u, \Z)$ is isomorphic to the lattice of rank $8$
whose Gram matrix is  the diagonal matrix  
$\diag (1, -1, \dots, -1)$,
and that 
the orthogonal complement 
\[
\Sigma_u := (\Z h_u \inj \SY_u)\sperp
\]
of the ample class $h_u$ in $\SY_u$ 
is isomorphic to the negative-definite root lattice of type $E_7$.
The deck transformation 
\[
\iota_u\colon Y_u\to Y_u
\]
of  $\pi_u\colon Y_u\to\P^2$
acts on $\Sigma_u$ as $-1$.
Note that the group $\OG(\Sigma_u)$  of isometries of  $\Sigma_u$
is equal to the Weyl group
$W(E_7)$, which is of order $2903040$. 
Hence there exists an injective  homomorphism  
\begin{equation}\label{eq:restoW}
\OG(\SY_u, h_u):=\set{g\in \OG(\SY_u)}{h_u^g=h_u} \;\;\inj \;\; W(E_7).
\end{equation}
%
%
It is easy to check that 
the action on $\Sigma_u$ of 
each of the standard generators of $W(E_7) $ lifts 
to an isometry of $\SY_u$ that fixes $h_u$.
Hence the homomorphism~\eqref{eq:restoW} is in fact  an isomorphism.
The family of lattices $\set{\SY_u}{u\in \UUU}$ 
forms a locally constant system  
\[
\SSSY\to \UUU.
\]
Let $b$ be a general point of $\UUU$,
which will serve as a base point of  $\UUU$.
The  monodromy action
of $\pi_1(\UUU, b)$ on the lattice $\SY_b$
preserves $h_b\in \SY_b$.
%
\begin{theorem}[Harris~\cite{Harris1979}]\label{thm:monodromy}
The monodromy homomorphism
\begin{equation}\label{eq:mon}
\pi_1(\UUU, b) \to \OG(\SY_b, h_b)\cong W(E_7)
\end{equation}
is surjective.
\end{theorem}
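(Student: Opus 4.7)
The plan is to use Picard--Lefschetz theory applied to the discriminant hypersurface $\Delta\subset \P_*(\Gamma(4))$ of singular quartic curves. The containment of the monodromy image $M$ in $W(E_7)$ is automatic, since the local system $\SSSY$ preserves the intersection form on each fibre and transports the class $h_u$ (the pullback of the hyperplane class from $\P^2$) in a locally constant way; consequently $M\subseteq \OG(\SY_b,h_b)\cong W(E_7)$. What demands work is surjectivity.

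The first step is to exhibit at least one reflection in $M$. Consider a generic point of $\Delta$, where the corresponding quartic has a single node. Passing to the double cover $Y_u$, the family acquires an ordinary double point, and the Picard--Lefschetz formula for a surface degeneration identifies the local monodromy as the reflection $s_\delta$ in a vanishing cycle $\delta\in \SY_b$ satisfying $\intfY{\delta,\delta}=-2$ and $\intfY{\delta,h_b}=0$. Thus $\delta$ is a root of $\Sigma_b\cong E_7$, and $s_\delta$ lies in $M$. Since the incidence variety of pairs (nodal quartic, its node) is irreducible, different choices of loop yield vanishing cycles in a single $M$-orbit, so $M$ contains the full conjugacy class $\{s_{g\delta}: g\in M\}$.

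The final step is to show that this orbit sweeps out the entire root system of $E_7$, after which surjectivity follows because reflections in all roots generate $W(E_7)$ and because $E_7$, being simply laced, has a single $W(E_7)$-orbit of roots. To carry this out I would examine the codimension-two strata of $\Delta$ parametrising quartics with two distinct nodes (and, if needed, higher singularities such as cusps or tacnodes): two vanishing cycles $\delta,\delta\sprime$ appear with a prescribed Gram matrix, and the element $s_\delta(\delta\sprime)\in M\cdot \delta$ produces new roots in the orbit. The main obstacle, and the technical heart of Harris's argument in~\cite{Harris1979}, is verifying that these strata realise every possible pair of roots up to $W(E_7)$-conjugacy; equivalently, ruling out the possibility that $M$ is trapped inside a proper reflection subgroup of $W(E_7)$.
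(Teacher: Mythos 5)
Your opening two steps are sound and classical: a one\hyphenchar\font=-1 -nodal quartic produces an ordinary double point on the double cover $Y_u$, Picard--Lefschetz exhibits the local monodromy as the reflection $s_\delta$ in a class $\delta$ with $\intfY{\delta,\delta}=-2$ and $\intfY{\delta,h_b}=0$, i.e.\ a root of $\Sigma_b\cong E_7$, and irreducibility of the generic stratum of the discriminant places all vanishing cycles in a single orbit of the monodromy group $M$. The genuine gap is that the argument stops exactly where the theorem begins. Knowing that $M$ contains the reflections in a single $M$-stable, full-rank set of roots does not force $M\supseteq W(E_7)$: the Borel--de Siebenthal subsystem $A_7\subset E_7$ has rank $7$, and its $56$ roots form a single orbit under $W(A_7)\cong\mathfrak{S}_8$ acting irreducibly on $\Sigma_b\otimes\Q$, so nothing you have established excludes $M$ from being trapped in $W(A_7)$ (or in $W(D_6{+}A_1)$, etc.). Worse, the specific mechanism you propose for enlarging the orbit --- the codimension-two stratum of two-nodal quartics --- cannot work as stated: two distinct nodes of the quartic yield two disjoint ordinary double points on $Y_u$, hence \emph{orthogonal} vanishing cycles $\delta,\delta'$, and then $s_\delta(\delta')=\delta'$ produces no new roots. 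One is forced to the cuspidal stratum (an $A_2$ point on the double cover), where the two vanishing cycles satisfy $\intfY{\delta,\delta'}=\pm1$, and one must then still prove that the resulting orbit exhausts all $126$ roots. Since you explicitly defer this step as ``the technical heart,'' what you have is a programme, not a proof.

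For comparison, the paper's proof is discriminant-free and avoids vanishing cycles entirely. It considers ordered $7$-tuples of pairwise disjoint $Y$-lifts of bitangents (equivalently, markings of $Y_u$ as a blow-up of $\P^2$ at $7$ points), notes that their number $576\cdot 7!$ equals $|W(E_7)|$ and that an isometry of $\SY_b$ fixing $h_b$ is uniquely determined by where it sends one such tuple; surjectivity therefore reduces to transitivity of the monodromy on these tuples, i.e.\ to connectedness of their total space over $\UUU$, which follows because blowing down realizes that space, up to $\PGL_3(\C)$-fibrations, over the connected space of $7$-tuples of points in a fixed plane. If you wish to complete your route instead, you must (a) replace the two-nodal stratum by the cuspidal one to obtain a non-orthogonal pair of vanishing cycles, and (b) give the lattice-theoretic argument that a $W$-stable set of roots of $E_7$ containing such a pair and spanning $\Sigma_b$ is the full root system, thereby ruling out every proper full-rank reflection subgroup.
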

The original statement in~\cite{Harris1979}
is not on the monodromy action on the lattice $\SY_b$,
but on the 
Galois group $W(E_7)/\{\pm 1\}\cong \GO_6(\F_2)$ of bitangents of $Q_b$.
Moreover the proof in~\cite{Harris1979}  
is via the proof of a similar result
on cubic surfaces with $E_7$ replaced by $E_6$.
Hence we give a direct and simple proof of Theorem~\ref{thm:monodromy} below.
\par
For the proof, we prepare some more notation,
which will be used throughout this paper. 
We denote by 
$\barL_u$  the set of bitangents of  $Q_u$, and 
$L_u$  the set of $Y$-lifts of  bitangents of $Q_u$.
Let $\Sigma_u\dual$ denote  the dual lattice of $\Sigma_u$.
By identifying  $l\in L_u$ with  its class $[l]\in \SY_u$,
we have an identification 
\begin{eqnarray}
L_u &\;=\;& \set{v\in \SY_u}{\intfY{v, h_u}=1, \;\intfY{v, v}=-1} \label{eq:Lu} \\
&\;\cong\;& \set{v\in \Sigma_u\dual}{\intfY{v, v}=-3/2}, \nonumber
\end{eqnarray}
where the second bijection is obtained by  the 
orthogonal projection
$\SY_u\to \Sigma_u\dual$.
We put $\barSY_u:=\SY_u/\gens{\iota_u}$, and consider the commutative diagram
\begin{eqnarray}\label{eq:diagLLSSYY}
\begin{array}{ccc}
L_u & \inj & \SY_u \\
\mapdown  &  &\mapdown \\ 
\barL_u &\inj &\barSY_u,
\end{array}
\end{eqnarray}
where vertical arrows are quotient maps by the involution $\iota_u$.
Since the  action of $\pione(\UUU, b)$
on $\SY_b$ commutes with $\iota_b$,
we have a monodromy  action of $\pione(\UUU, b)$ 
on $ \barSY_b$.
Thus we obtain a diagram
\begin{eqnarray}\label{eq:caldiagLLSSYY}
\begin{array}{ccc}
\LLL & \inj & \SSSY \\
\mapdown   &  &\mapdown \\ 
\barLLL &\inj& \barSSSY
\end{array}
\end{eqnarray}
of locally constant systems over $\UUU$  parameterizing the diagram~\eqref{eq:diagLLSSYY} over $\UUU$,
where vertical arrows are quotient maps by the family of involutions 
\begin{equation*}\label{eq:iotaUUU}
\iotaUUU:=\set{\iota_u}{u\in \UUU}.
\end{equation*}
Note that $\barLLL$ is the space parameterizing all
bitangents of smooth quartic curves.
\begin{proof}[Proof of Theorem~\ref{thm:monodromy}]
Let $\disjsevenset_u$ (resp.~$\disjsevenlist_u$) be the set 
of non-ordered $7$-tuples $\{l_1, \dots, l_7\}$ 
(resp.~ordered $7$-tuples $[l_1, \dots, l_7]$)
of elements $l_1, \dots, l_7\in L_u$ such that $\intfY{l_i, l_j}=0$ for $i\ne j$. 
By~\eqref{eq:Lu}, we can enumerate all  elements  of $\disjsevenset_u$.
It turns out that  
$|\disjsevenset_u|=576$, 
and hence 
\begin{equation}\label{eq:disjsize}
|\disjsevenlist_u|=576\cdot 7!=2903040 =|W(E_7)|.
\end{equation}
(See also Remark~\ref{rem:576}.)
For $7$-tuples $\lambda=[l_1, \dots, l_7]$ and 
$\lambda\sprime=[l\sprime_1, \dots, l\sprime_7]$ in $\disjsevenlist_u$,
there exists a unique isometry
$g_{\lambda, \lambda\sprime}\in \OG(\SY_u\tensor \Q)$
such that
\[
g_{\lambda, \lambda\sprime}(h_u)=h_u, \quad
g_{\lambda, \lambda\sprime}(l_i)=l\sprime_i \quad(i=1, \dots, 7).
\]
It is enough show that, 
when $u=b$, 
these elements 
$g_{\lambda, \lambda\sprime}$ are 
contained in the image of the monodromy~\eqref{eq:mon}.
Indeed, by~\eqref{eq:disjsize}, 
this claim implies that 
these isometries  $g_{\lambda, \lambda\sprime}$ 
constitute  the whole group $\OG(\SY_b, h_b)\cong W(E_7)$.
To prove this claim,
it is enough to show  that 
$\pione (\UUU, b)$ acts on $\disjsevenlist_b$ transitively by the monodromy, 
or equivalently, to show  that   the total space $\disjsevenlistfam$ of 
the locally constant system 
\[
\disjsevenlistfam \to \UUU
\]
obtained from the family $\set{\disjsevenlist_u}{u\in \UUU}$ is connected.
\par
Let $\lambda=[l_1, \dots, l_7] $ be a point of $\disjsevenlistfam$
over   $u\in \UUU$. 
Contracting the  $(-1)$-curves $l_1, \dots, l_7$, we obtain a birational morphism
\[
\bl_{\lambda}\colon Y_u \to \PP_{\lambda}
\]
to a projective plane $\PP_{\lambda}$.
We put  $\beta_{\lambda}:=[\bl_{\lambda}(l_1), \dots, \bl_{\lambda}(l_7) ]$.
Conversely, we fix a projective plane  $\PP$,
and let $\PPseven$ denote the set of 
ordered $7$-tuples $[p_1, \dots, p_7]$ of distinct points
of $\PP$.
For a general point $\beta=[p_1, \dots, p_7]$ of $ \PPseven$,
let 
\[
\bl^{\beta}\colon Y^{\beta}\to \PP
\]
be the blowing-up
at the points $p_1, \dots, p_7$.
Then $Y^{\beta}$ is a del Pezzo surface of degree $2$,
and the complete linear system 
of the anti-canonical divisor on $Y^{\beta}$ gives 
a double covering $Y^{\beta}\to \P^2$ branched along a smooth quartic curve $Q^{\beta}$
such that each of the $7$ exceptional curves 
over  $p_1, \dots, p_7$
is a $Y$-lift of a bitangent of $Q^{\beta}$.
Hence there exist a point $\lambda\in  \disjsevenlistfam$
and an isomorphism $\PP_{\lambda} \cong \PP$
that maps $\beta_{\lambda}$ to $\beta$.
\par
We put
\[
\III:=\bigset{(\lambda, \gamma, \beta)}{\;\;\parbox{7cm}{$\lambda \in \disjsevenlistfam$, $\beta\in \PPseven$, and $\gamma$ is 
an isomorphism $\PP_{\lambda} \cong \PP$
that maps $\beta_{\lambda}$ to $\beta$}}.
\]
Then the projection $\III\to \disjsevenlistfam$ is surjective with fibers isomorphic to $\PGL_3(\C)$,
whereas  the projection $\III\to \PPseven$ is dominant with fibers isomorphic to $\PGL_3(\C)$.
Since $\PPseven$ and $\PGL_3(\C)$ are connected, 
we see that $\disjsevenlistfam$  is connected.
\end{proof}
\section{Family of $4$-tangent conics}\label{sec:familyofconics}
In this section,
we construct a space $\barCCC$
parameterizing all $4$-tangent conics of smooth quartic curves.
\par
Let $\barC_u$  denote the set of $4$-tangent conics of $Q_u$, 
and let $C_u$ be the set of $Y$-lifts of $4$-tangent conics of $Q_u$.
We put
\begin{equation}\label{eq:Fu}
F_u:=\set{v\in \SY_u}{\intfY{v, h_u}=2, \intfY{v, v}=0} 
\;\cong\;  \set{v\in \Sigma_u}{\intfY{v, v}=-2}, 
\end{equation}
where the second bijection is given by  the orthogonal projection $\SY_u\to \Sigma_u\dual$.
As was shown in Table~\ref{table:pullbacks}, we have $[c]\in F_u$ for any $c\in C_u$.
Note that  $|F_u|=126$,
the number of roots of the root lattice $\Sigma_u$ of type $E_7$.
We  put
\[
\barF_u:=F_u/\gens{\iota_u} \;\; \subset \;\; \barSY_u.
\]
Then we have a commutative diagram
\begin{equation}\label{eq:CCFFdiag}
\begin{array}{ccccc}
C_u&\maprightsp{ \Phi_u} & F_u & \inj  & \SY_u \\
\mapdown  & & \mapdown  & & \mapdown  \\
\barC_u&\maprightsp{ \barPhi_u} & \barF_u & \inj  & \barSY_u
\end{array}
\end{equation}
where $ \Phi_u\colon C_u\to F_u$ is given by $c\mapsto [c]\in \SY_u$,
and the vertical arrows are quotient by the involution $\iota_u\colon Y_u\to Y_u$.
We have locally constant systems 
$\FFF\to \UUU$  and  $\barFFF\to \UUU$ 
obtained from  the families  $\set{F_u}{u\in \UUU}$
and $\set{\barF_u}{u\in \UUU}$.
\begin{theorem}\label{thm:barCCC}
There exists a commutative diagram 
\begin{equation}\label{eq:calCCFFdiag}
\begin{array}{ccccc}
\CCC&\maprightsp{ \PhiUUU} & \FFF & \inj  & \SSSY \\
\mapdown  & & \mapdown  & & \mapdown  \\
\barCCC&\maprightsp{ \barPhiUUU} & \barFFF & \inj  & \barSSSY
\end{array}
\end{equation}
of morphisms over $\UUU$ 
that parameterizes the diagrams~\eqref{eq:CCFFdiag} over $\UUU$.
The morphisms   $\PhiUUU\colon \CCC\to \FFF$ and $\barPhiUUU\colon \barCCC\to \barFFF$  are smooth and surjective,
and every fiber of them is a Zariski open subset of $\P^1$.
\end{theorem}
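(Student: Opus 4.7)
The plan is to first establish the pointwise claim---that for every $u\in\UUU$ and every $v\in F_u$, the complete linear system $|v|$ on $Y_u$ is isomorphic to $\P^1$ and contains a non-empty Zariski open locus of $Y$-lifts of $4$-tangent conics of $Q_u$---and then to globalize this over $\UUU$ as a relative $\P^1$-bundle minus a proper closed subscheme.

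For a single fiber, fix $u\in\UUU$ and $v\in F_u$. Since $h_u=-K_{Y_u}$, $v^2=0$, and $v\cdot h_u=2$, Riemann--Roch on the del Pezzo surface $Y_u$ gives $\chi(\OOO_{Y_u}(v))=2$. Serre duality kills $h^2(v)$ because $(-v-h_u)\cdot h_u=-4<0$ while $h_u$ is ample, and Kodaira vanishing applied to the expression $v=K_{Y_u}+(v+h_u)$ kills $h^1(v)$ once $v+h_u$ is shown ample. Ampleness is verified by the Nakai--Moishezon criterion: $(v+h_u)^2=2>0$ and $(v+h_u)\cdot h_u=4>0$, and the remaining checks against the finitely many $(-1)$- and $(-2)$-classes in $\SY_u$ reduce to bounded integer pairings inside the $E_7$-lattice. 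Thus $|v|\cong\P^1$. A general member $c\in|v|$ is an irreducible smooth rational curve with $c\cdot h_u=2$, so $\pi_u(c)$ is a plane conic, and the relation $c+\iota_u(c)\sim 2h_u=\pi_u^*(\pi_u(c))$ identifies $\pi_u(c)$ as a splitting conic of $Q_u$ with exactly two $Y$-lifts $\{c,\iota_u(c)\}$. By Lemma~\ref{lem:2222or422} this is either a $3$- or $4$-tangent conic, and being a $4$-tangent conic is an open condition cutting out $\Phi_u^{-1}(v)$ as a Zariski open subset of $|v|$.

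Non-emptiness of $\Phi_u^{-1}(v)$ for every pair $(u,v)$ reduces, via Theorem~\ref{thm:monodromy}, to the existence of a single $4$-tangent conic on a single smooth quartic: the monodromy image $W(E_7)$ acts transitively on the $126$ roots of $\Sigma_b$ and hence on $F_b$, so a non-empty open locus on one fiber propagates to all fibers. Existence on a single quartic is ensured by the parametrization $\varphi=fg+q^2$ of splitting conics together with the dimension count already used in Lemma~\ref{lem:2222or422}. To globalize, form the universal del Pezzo surface over $\UUU$, pull it back along $\FFF\to\UUU$, and represent the tautological divisor class étale-locally by a relative Cartier divisor. The fiberwise vanishing $h^1(v)=0=h^2(v)$ together with cohomology-and-base-change make the relative direct image of the associated line bundle a rank-$2$ locally free sheaf on $\FFF$, whose projectivization is a $\P^1$-bundle over $\FFF$. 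Defining $\CCC$ to be the open subscheme where the corresponding divisor is a $Y$-lift of a smooth $4$-tangent conic yields the required morphism $\PhiUUU$, and taking the quotient by $\iotaUUU$ produces $\barPhiUUU$; the commutativity of~\eqref{eq:calCCFFdiag} is then tautological. The main obstacle is the Nakai--Moishezon verification of ampleness for $v+h_u$; once this cohomological input is in hand, the globalization and the open-subscheme description are routine.
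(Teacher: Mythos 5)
Your route is genuinely different from the paper's: you work with Riemann--Roch and Kodaira vanishing directly on the del Pezzo surface $Y_u$ to get $|v|\cong\P^1$, whereas the paper passes to the $K3$ surface $X_u$ and realizes $\eta_u^*(v)$ as the fiber class of an elliptic fibration $\phi_v$. The cohomological part of your argument is sound (modulo the slip $(v+h_u)^2=6$, not $2$, and the fact that a del Pezzo surface has no $(-2)$-curves, so the Nakai--Moishezon check reduces to the $56$ classes of $Y$-lifts of bitangents, against which one indeed finds $v\cdot l\ge 0$). But there is a genuine gap at the crucial step: the surjectivity of $\Phi_u$, i.e.\ the non-emptiness of $\Phi_u^{-1}(v)$ for \emph{every} pair $(u,v)$. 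Your two inputs do not deliver this. First, Lemma~\ref{lem:2222or422} is stated only for general $u$, while the theorem asserts that every fiber of $\PhiUUU$ is a (non-empty) open subset of $\P^1$ over all of $\FFF$. Second, the monodromy propagation does not work as stated: the monodromy of Theorem~\ref{thm:monodromy} is realized by topological parallel transport, which produces homeomorphisms of $(\P^2,Q_b)$ that do not carry algebraic conics to algebraic conics; transitivity of $W(E_7)$ on $F_b$ therefore does not transport a non-empty fiber of $\Phi_b$ from one $v$ to another. At best you obtain that the image of $\CCC\to\FFF$ is a dense open subset of the irreducible variety $\FFF$, which leaves open the possibility that $\Phi_u^{-1}(v)=\emptyset$ for special $(u,v)$.

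The paper closes exactly this point by observing that every elliptic fibration on a $K3$ surface has smooth fibers and that a smooth fiber of $\phi_v$ must map to a conic tangent to $Q_u$ at four distinct points (otherwise the double cover would be singular). If you want to stay on $Y_u$, the fix is: the morphism $\psi_v\colon Y_u\to\P^1$ defined by $|v|$ restricts to a finite degree-$4$ map from the ramification curve $R=\pi_u^{-1}(Q_u)\cong Q_u$ to $\P^1$; since $R$ is a smooth irreducible curve and we are in characteristic $0$, all but finitely many fibers of $\psi_v|_R$ consist of four distinct points, and for such a fiber the corresponding member $c$ meets $R$ transversally in four points, so $\pi_u(c)$ is a $4$-tangent conic. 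A secondary, smaller gap is your unproved assertion that the general member of $|v|$ is an irreducible smooth rational curve: you need that $|v|$ has no fixed component (this follows from the six reducible members $l_i+l_i\sprime$ with twelve distinct components, which is precisely the verification the paper makes upstairs), whence $v^2=0$ forces base-point-freeness; connectedness of the general member requires the primitivity observation that no class $v\sprime$ with $\intfY{v\sprime,v\sprime}=0$ and $\intfY{v\sprime,h_u}=1$ exists in $\SY_u$; then Bertini and adjunction finish. With these two repairs your del Pezzo argument would give a complete, somewhat more elementary proof; as written it does not establish surjectivity of $\PhiUUU$ and $\barPhiUUU$.
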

For the proof, we use the double covering $\eta_u\colon X_u\to Y_u$ of $Y_u$ 
by the  $K3$ surface $X_u$.
We consider the N\'eron-Severi lattice 
\[
\SX_u:=H^2(X_u, \Z)\cap H^{1,1}(X_u)
\]
with the intersection form $\intfX{\,,\,}$.
Then $\eta_u$ induces 
a embedding of the lattice
\[
\eta_u^*\colon \SY_u(2)\inj \SX_u,
\]
where $\SY_u(2)$ is the lattice obtained from $\SY_u$ by multiplying the intersection form by  $2$.
Let $j_u\colon  X_u\to X_u$ be a generator of 
the cyclic group $\Gal(X_u/\P^2)$ of order $4$.
Then $\eta_u\colon X_u\to Y_u$
is the quotient morphism by $j_u^2$.
Hence $j_u^2$ acts on the image of $\eta_u^*\colon \SY_u(2)\inj \SX_u$ trivially.
\begin{proof}[Proof of Theorem~\ref{thm:barCCC}]
Note that the family of involutions $\iotaUUU=\set{\iota_u}{u\in \UUU}$  
acts on $\FFF$ over $\UUU$ 
without fixed points.
Hence, 
if the parameterizing space  $\PhiUUU\colon \CCC\to \FFF$ 
of $\Phi_u\colon C_u\to F_u$ is constructed,
then $\barPhiUUU\colon \barCCC\to\barFFF$ is constructed  as a quotient of 
$\PhiUUU\colon \CCC\to \FFF$ by $\iotaUUU$.
\par
Let $u$ be an arbitrary point of $\UUU$,
and let $v$ be an element of $F_u\subset \SY_u$.
We put  $\tilv:=\eta_u^*(v)\in \SX_u$.
We can easily confirm that
there exist exactly $6$ pairs  
$\{l_i, l\sprime_i\}$ ($i=1, \dots, 6$) of $Y$-lifts of bitangents of $Q_u$
such that $\intfY{l_i, l_i\sprime}=1$ and $v=[l_i]+[l_i\sprime]$,
and that these $12$  curves $l_1, l_1\sprime, \dots, l_6, l_6\sprime$
are distinct. 
Hence the complete linear system on the $K3$ surface $X_u$ 
corresponding to $\tilv\in \SX_u$ has no fixed components.
The class $\tilv$ is primitive in $\SX_u$
with  $\intfX{\tilv, \tilv}=0$ and $\intfX{\tilh_u, \tilv}=4$.
Therefore there exists an elliptic fibration on $X_u$
such that the class of a fiber is equal to $\tilv$.
We denote this elliptic fibration by $\phi_v\colon X_u\to \P^1$ .
\par
If $c\in C_u$,
then $\eta_u^*(c)$ is an elliptic curve,
and hence $\eta_u^*(c)$ is a smooth fiber of an elliptic fibration of $\phi_v\colon X_u\to \P^1$, 
where $v=[c]$.
Conversely, suppose that $(u, v)\in \FFF$, and 
let  $f$ be a smooth fiber of the  elliptic fibration $\phi_v\colon X_u\to \P^1$.
We denote by  $a\subset \P^2$  the plane curve $\gamma_u(f)$
with the reduced structure.
Let $d$ be the degree of $a$,
and $\delta$ the mapping degree of $\gamma_u|f\colon f\to a$.
Since $\intfX{\tilh_u, f}=4$ and $\gamma_u\colon X_u\to \P^2$ is Galois,
we have $(d, \delta)=(1,4)$, $(2,2)$, or $(4,1)$.
If $(d, \delta)=(1,4)$, 
then $f=\gamma_u\inv(a)$ is invariant under the action of $\Gal(X_u/\P^2)=\gens{j_u}$,
and hence the class $[f]\in \SX_u$ is a non-zero multiple of $\tilh_u$,
which contradicts $\intfX{f, f}=0$.
If $(d, \delta)=(4,1)$, 
then $f$, $j_u(f)$,  $j_u^2(f)$, $j_u^3(f)$
are distinct curves that intersect over the points  of $a\cap Q_u$.
On the other hand, since $[f]=\eta_u^* (v)\in \Image \eta_u^*$,
we have $j_u^{*2}([f])=[f]$.
This contradicts $\intfX{f, f}=0$.
Hence $(d, \delta)=(2,2)$, and 
we see that $a$ is a smooth splitting conic.
Note that $a$ is $4$-tangent,
because otherwise $f$ would be singular.
Thus we have proved that $c\mapsto \eta_u^*(c)$
gives a bijection from $C_u$ to the union of the sets of smooth fibers
of elliptic fibrations $\phi_v\colon X_u\to \P^1$, where  $v$ runs through $F_u$.
\par
Let $\XXX\to \UUU$ be the universal family of $\set{X_u}{u\in \UUU}$,
and let $\pi_{\FFF}\colon \FFF\times_{\UUU}\XXX\to \FFF$ be the pull-back of $\XXX\to \UUU$ by $\FFF\to\UUU$.
Let $\MMM$ be a line bundle on $\FFF\times_{\UUU}\XXX$
such that
the class $[\MMM|X_u]\in \SX_u$ of the line bundle 
$\MMM|X_u$ on  $\pi_{\FFF}\inv (u,v)= X_u$
is equal to $v\in F_u$.
Then $\pi_{\FFF*} \MMM\to \FFF$ is a vector bundle of rank $2$.
The fiber over $(u, v)\in \FFF$ 
of the $\P^1$-bundle $\P_*( \pi_{\FFF*} \MMM)\to \FFF$  
is identified with the base curve of the elliptic fibration
$\phi_v\colon X_u\to \P^1$.
We can construct $\CCC$ as the open subset of $\P_*( \pi_{\FFF*} \MMM)$
consisting of non-critical points of $\phi_v\colon X_u\to \P^1$.
\end{proof}
The non-singular varieties $\CCC$ and $\barCCC$
parameterize all pairs $(u, c)$ and  $(u, \barc)$, respectively, 
where $u\in \UUU$ and $c \in C_u$, $\barc\in \barC_u$.
Since $\Phi_u$ and $\barPhi_u$ have connected fibers, 
we can regard $F_u$  as the set of connected families of $Y$-lifts of 
$4$-tangent conics of $Q_u$,
and  $\barF_u$ as the set of connected families of
$4$-tangent conics.
The following observation obtained in the proof of Theorem~\ref{thm:barCCC} 
will be  used in the next section.
\begin{proposition}\label{prop:nobasepts}
Every connected family $[c]\in F_u$ of $Y$-lifts of  splitting conics is a pencil with no base points.
\qed
\end{proposition}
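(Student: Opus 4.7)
The plan is to descend the base-point-free pencil structure from the $K3$ surface $X_u$ to the del Pezzo surface $Y_u$ along the double cover $\eta_u \colon X_u \to Y_u$. The proof of Theorem~\ref{thm:barCCC} already provides, for each $v \in F_u$, an elliptic fibration $\phi_v \colon X_u \to \P^1$ whose fiber class is $\tilv = \eta_u^*(v)$; in particular the linear system $|\tilv|$ on $X_u$ is a base-point-free pencil with $h^0(X_u, \OOO_{X_u}(\tilv)) = 2$.

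First I would compute the push-forward of the structure sheaf under $\eta_u$. Since $\eta_u$ is branched along $\pi_u^{-1}(Q_u)$, whose class in the torsion-free lattice $\SY_u$ is $2 h_u$, the standard double-cover formula gives
\[
\eta_{u*}\OOO_{X_u} \;=\; \OOO_{Y_u} \oplus \OOO_{Y_u}(-h_u).
\]
Twisting by $\OOO_{Y_u}(v)$ and taking global sections yields
\[
H^0(X_u, \OOO_{X_u}(\tilv)) \;=\; H^0(Y_u, \OOO_{Y_u}(v)) \;\oplus\; H^0(Y_u, \OOO_{Y_u}(v - h_u)).
\]
The second summand vanishes: the class $v - h_u$ is nonzero (since $\intfY{v,v} = 0 \ne 2 = \intfY{h_u, h_u}$) and satisfies $\intfY{v - h_u, h_u} = 2 - 2 = 0$, so it cannot be effective because every nonzero effective divisor class on $Y_u$ has strictly positive intersection with the ample class $h_u$. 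Hence $h^0(Y_u, \OOO_{Y_u}(v)) = 2$, so $|v|$ is a pencil, and pullback by $\eta_u$ identifies $|v|_{Y_u}$ with $|\tilv|_{X_u}$.

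For base-point-freeness, I would argue by contradiction: if $p \in Y_u$ were a base point of $|v|$, then every section of $\eta_u^*\OOO_{Y_u}(v) = \OOO_{X_u}(\tilv)$ would vanish along the entire preimage $\eta_u^{-1}(p)$, contradicting the fact that $|\tilv|$ has empty base locus. The only step requiring genuine care is the vanishing $H^0(Y_u, \OOO_{Y_u}(v - h_u)) = 0$, and this is the main obstacle in the argument; it is dispatched by the short positivity argument above, after which everything reduces to a direct transcription of the $X_u$-level facts established in the proof of Theorem~\ref{thm:barCCC}.
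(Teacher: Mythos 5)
Your argument is correct, and it reaches the statement by a slightly different mechanism than the paper. Both proofs ultimately rest on the elliptic fibration $\phi_v\colon X_u\to\P^1$ with fibre class $\tilv=\eta_u^*(v)$ constructed in the proof of Theorem~\ref{thm:barCCC}; but the paper simply identifies the connected family of $Y$-lifts with the set of smooth fibres of $\phi_v$ (equivalently, with the base $\P^1$ of the fibration, realized as a fibre of $\P_*(\pi_{\FFF*}\MMM)$), and reads off both conclusions from there --- the pencil structure from the rank-$2$ push-forward, and base-point-freeness from the fact that distinct fibres of $\phi_v$ are disjoint, hence so are their images in $Y_u$. You instead descend the linear system itself: the decomposition $\eta_{u*}\OOO_{X_u}=\OOO_{Y_u}\oplus\OOO_{Y_u}(-h_u)$ together with the vanishing of $H^0(Y_u,\OOO_{Y_u}(v-h_u))$ (via $\intfY{v-h_u, h_u}=0$ and ampleness of $h_u$) shows $\eta_u^*$ identifies $|v|$ with $|\tilv|$, after which a base point of $|v|$ would pull back to a base point of $|\tilv|$. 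Your route has the merit of making explicit a step the paper leaves implicit, namely why $h^0(Y_u,\OOO_{Y_u}(v))=2$ rather than larger; the paper's route is shorter because disjointness of fibres gives base-point-freeness for free. (A still more elementary variant, entirely on $Y_u$: Riemann--Roch on the del Pezzo surface gives $h^0(\OOO_{Y_u}(v))\ge 2$, and since two distinct irreducible members are linearly equivalent with $\intfY{v,v}=0$ they are disjoint, which forces both $h^0=2$ and the absence of base points.) All steps of your write-up check out; the only point worth flagging is that you should note explicitly that the connected family $\Phi_u^{-1}(v)$ consists of divisors that are \emph{linearly} (not merely homologically) equivalent, which holds because $Y_u$ is rational, so that the family really sits inside the complete linear system $|v|$ you analyze.
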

\begin{remark}\label{rem:Pic}
A line section $\Lambda_u$ of $Q_u\subset \P^2$ is a canonical class of the genus $3$ curve $Q_u$.
Let $\Pic^0(Q_u)$ be the Picard group of line bundles of degree $0$  of  $Q_u$,
and let $\Pic^0(Q_u)[2]$ be the  subgroup of $2$-torsion points of $\Pic^0(Q_u)$.
For a $4$-tangent conic $\barc$ of $Q_u$,
let $\Theta_u(\barc)$ be 
the reduced part of the divisor $\barc\cap Q_u$ of $Q_u$.
Then the class of the divisor $\Theta_u(\barc)-\Lambda_u$ of degree $0$ is a point of $\Pic^0(Q_u)[2]-\{0\}$,
and this correspondence gives a bijection $\barF_u\cong \Pic^0(Q_u)[2]-\{0\}$.
\end{remark}
\section{Proof of the main results}\label{sec:ZZZ}
In this section, 
we construct the space $\ZZZ\spmn$ parameterizing all  $\Qmn$-curves,
and prove Theorems~\ref{thm:main}~and~\ref{thm:main2}.
\subsection{Deformation types}\label{subsec:Deftypes}
We fix some notation.
For a  set $A$, let $S^k(A)$ denote  the symmetric product 
$A^k/\SSSS_k$, 
where $A^k=A\times \dots\times A$ ($k$ times), 
and let $S_0^k(A)$ denote the complement in $S^k(A)$ of the image of the big diagonal in $A^k$. 
\par
For a morphism  $\AAA\to \UUU$,
let $\SSS^k(\AAA)$ denote  the symmetric product
$\AAA^k/\SSSS_k$, 
where $\AAA^k:=\AAA\times_{\UUU} \dots\times_{\UUU} \AAA$  ($k$ times), 
and  let $\SSS_0^k(\AAA)$ denote the complement in $\SSS^k(\AAA)$ of the image of the big diagonal
in  $\AAA^k$. 
Note that,  if $\AAA$ is smooth over $\UUU$ with relative dimension $1$,
then $\SSS^k(\AAA)$ is smooth over $\UUU$ with relative dimension $k$.
\par
Recall that   $\barLLL\to \UUU$ and $\barCCC\to \UUU$  are 
the spaces parameterizing
all  bitangents and 
all $4$-tangent conics of smooth quartic curves,
respectively.
We put
\[
\ZZZ\sp{\prime (m, n)}:=\SSS_0^m(\barLLL)\times_{\UUU} \SSS_0^n(\barCCC),
\]
which is the space parameterizing  all curves $Z\sprime$,
where $Z\sprime$ is a union    
of a smooth quartic curve $Q$, $m$ distinct bitangents of $Q$, and $n$ distinct $4$-tangent conics of $Q$.
Now we can construct  the parameter space 
\[
\varpi\colon \ZZZ\spmn\to \UUU
\]
of $\Qmn$-curves
as the open subvariety of $\ZZZ\sp{\prime (m, n)}$ 
consisting of points
corresponding to plane curves 
$Z\sprime$ satisfying conditions  (i), (ii), (iii) in Definition~\ref{def:Qmn}.
For a point $\zeta\in \ZZZ\spmn$,
we denote by $Z_{\zeta}$ the $\Qmn$-curve  corresponding to $\zeta$. 
\par
For $u\in \UUU$,
we put
\[
P\spmn_u:=S_0^m(\barL_u)\times S^n(\barF_u)\;\;\subset\;\; S_0^m(\barSY_u)\times S^n(\barSY_u).
\]
The size of $P\spmn_u$ is equal to $d\spmn$ defined  by~\eqref{eq:dmn}.
Then we obtain  a finite \'etale covering 
\[
\rho\colon \PPP\spmn:=\SSS_0^m(\barLLL)\times_{\UUU} \SSS^n(\barFFF) \to \UUU
\]
of degree $d\spmn$ parameterizing 
the family $\set{P\spmn_u}{u\in \UUU}$.
Using  
$\barPhiUUU\colon \barCCC\to \barFFF$
in Theorem~\ref{thm:barCCC},
we have a morphism
$\theta\sprime\colon \ZZZ\sp{\prime (m, n)}\to \PPP\spmn$.
Restricting $\theta\sprime$ to the open subvariety $\ZZZ\spmn \subset  \ZZZ\sp{\prime (m, n)}$, 
we obtain a morphism
\[
\theta\colon \ZZZ\spmn\to \PPP\spmn,
\]
which maps $\zeta\in \ZZZ\spmn$ to 
\begin{equation}\label{eq:rhoZ}
\theta(\zeta):=
(\{\barl_1, \dots, \barl_m\}, [[\barc_1], \dots, [\barc_n]]) \;\in\;P\spmn_{\varpi(\zeta)},
\end{equation}
where $Z_{\zeta}$ has the irreducible components  as in~\eqref{eq:Z}.
Thus we obtain the following  commutative diagram.
\[
\begin{array}{ccccc}
\ZZZ\spmn && \raise 2pt \hbox{$\maprightsp{\theta} $}&& \PPP\spmn \\
&\hskip -10pt \llap{\tiny $\varpi$}\searrow && \swarrow\rlap{\tiny $\rho$} \hskip -10pt \\
&& \UUU &&
\end{array}
\]
To investigate the image of $\theta$, 
we put
\[
\UUU\sprime
:=\sethd{u\in \UUU}{8cm}{every bitangent of $Q_u$ is ordinary, and their union has only ordinary nodes as its singularities},
\]
which is a Zariski open dense subset of  $\UUU$.
\begin{lemma}\label{lem:smooth}\label{lem:UUUsprime}
The  morphism $\theta\colon \ZZZ\spmn \to \PPP\spmn$ is 
smooth with each non-empty  fiber being an irreducible variety of 
dimension $n$.
The image of $\theta$ contains $\rho\inv (\UUU\sprime)$.
In particular, the morphism $\theta$ is dominant.
\end{lemma}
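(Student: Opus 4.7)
The plan has three steps: (a) describe each fiber of $\theta$ explicitly via Theorem~\ref{thm:barCCC}, (b) deduce smoothness of $\theta$, and (c) establish $\rho\inv(\UUU\sprime)\subseteq\theta(\ZZZ\spmn)$, from which dominance follows. For step~(a), given a point $p\in\PPP\spmn$ lying over $u\in\UUU$ that records bitangents $\{\barl_1,\dots,\barl_m\}$ and a multiset $\{v_1,\dots,v_n\}\subset\barF_u$, formula~\eqref{eq:rhoZ} identifies $\theta\inv(p)$ with the set of unordered $n$-tuples $\{\barc_1,\dots,\barc_n\}$ of distinct $4$-tangent conics of $Q_u$ whose multiset of classes is $\{v_1,\dots,v_n\}$ and which, together with the fixed $\barl_i$, satisfy conditions~(ii) and~(iii) of Definition~\ref{def:Qmn} (condition~(i), depending only on the bitangents, is automatic). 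Theorem~\ref{thm:barCCC} identifies each $\barPhi_u\inv(v_j)$ with a Zariski-open subset of $\P^1$, so $\theta\inv(p)$ is a Zariski-open subset of the product $\prod_v\SSS_0^{k_v}(\barPhi_u\inv(v))$, where $k_v$ is the multiplicity of $v$ in the multiset; when non-empty, this is an irreducible smooth variety of dimension~$n$.

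For step~(b), I observe that $\barLLL\to\UUU$ and $\barFFF\to\UUU$ are finite \'etale, so the relative symmetric products $\SSS_0^m(\barLLL)$ and $\SSS^n(\barFFF)$ are finite \'etale over $\UUU$ as well; hence the target $\PPP\spmn$ is smooth. The source $\ZZZ\spmn$ is open in $\SSS_0^m(\barLLL)\times_\UUU\SSS_0^n(\barCCC)$, which is smooth over $\UUU$ because $\barCCC\to\UUU$ is smooth of relative dimension~$1$ by Theorem~\ref{thm:barCCC}. Since both source and target are smooth and, by step~(a), every non-empty fiber of $\theta$ has the same dimension $n=\dim\ZZZ\spmn-\dim\PPP\spmn$, the standard equidimensionality criterion gives flatness of $\theta$, and flatness combined with smoothness of fibers yields smoothness of $\theta$ of relative dimension~$n$.

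For step~(c), fix $u\in\UUU\sprime$ and $p\in\rho\inv(u)$; I plan to choose $\barc_j\in\barPhi_u\inv(v_j)$ one at a time so that all conditions of Definition~\ref{def:Qmn} hold. Condition~(i) is automatic by the definition of $\UUU\sprime$. The conditions to avoid at the $j$-th step are: passage through any of the finitely many pairwise intersection points among $Q_u$, the $\barl_i$, and previously chosen $\barc_{j'}$ (for~(ii)); and non-transversal intersection with any $\barl_i$ or any previously chosen $\barc_{j'}$ (for~(iii)). Each of these is a Zariski-closed condition on the $1$-dimensional pencil $\barPhi_u\inv(v_j)$, and the main obstacle of the plan is to verify that each is a \emph{proper} closed subset. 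For the ``passage'' conditions, properness reduces to showing the bad point is not a base point of the image pencil on $\P^2$; for the ``tangency'' conditions, it reduces to showing the pencil does not consist entirely of conics tangent to a given line or conic. Both reductions can be handled using Proposition~\ref{prop:nobasepts} together with a Riemann--Hurwitz argument applied to the base-point-free morphism $Y_u\to\P^1$ induced by the pencil of $Y$-lifts, restricted to suitable lifts on $Y_u$. Once properness is established, the open complement in each $\barPhi_u\inv(v_j)$ is non-empty, and an inductive choice of the $\barc_j$ produces a $\Qmn$-curve over $p$. Finally, dominance of $\theta$ follows from the density of $\rho\inv(\UUU\sprime)$ in $\PPP\spmn$.
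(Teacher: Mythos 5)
Your proposal is correct and follows essentially the same route as the paper: the fibers of $\theta$ are identified via Theorem~\ref{thm:barCCC} as open subsets of (symmetric products of) the pencils $\barPhi_u\inv(v)\subset\P^1$, hence irreducible of dimension $n$, and surjectivity over $\rho\inv(\UUU\sprime)$ is obtained from Proposition~\ref{prop:nobasepts} together with a general-position (Bertini-type) argument choosing each $\barc_j$ generically in its connected family. The only cosmetic difference is that the paper deduces smoothness of $\theta$ by restricting the smooth surjection $\theta\sprime\colon \ZZZ\sp{\prime (m, n)}\to\PPP\spmn$ (inherited directly from smoothness of $\barPhiUUU$) to the open subset $\ZZZ\spmn$, rather than via your miracle-flatness detour, which requires the slightly delicate extra check that the scheme-theoretic fibers are reduced.
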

\begin{proof}
Since $\barPhiUUU\colon \barCCC\to \barFFF$ is smooth and surjective with each  fiber being
a Zariski open subset of $\P^1$,
the morphism $\theta\sprime\colon \ZZZ\sp{\prime (m, n)}\to \PPP\spmn$ 
is smooth and surjective 
with each fiber being an irreducible variety of 
dimension $n$. 
Suppose that $u\in \UUU\sprime$,
and let $p:= (\{\barl_1, \dots, \barl_m\}, [[\barc_1], \dots, [\barc_n]])$
be a point of $P\spmn_u$.
By  Proposition~\ref{prop:nobasepts} and Bertini's theorem,
if we choose each $4$-tangent conic $\barc_j\sprime$ in the connected family $[\barc_j]\in \barF_u$ generally,
the curve $Q_{u}+\sum \barl_i+ \sum \barc_j\sprime$
satisfies conditions (ii) and (iii) in Definition~\ref{def:Qmn}.
Hence $\theta\inv(p)=\theta^{\prime-1}(p)\cap \ZZZ\spmn$ is non-empty.
\end{proof}
\begin{proof}[Proof of Theorem~\ref{thm:main2}]
By Lemma~\ref{lem:smooth}, 
the connected components of $\ZZZ\spmn$
are  in bijective correspondence with 
the connected components of $\PPP\spmn$,
and hence with the $\pi_1(\UUU, b)$-orbits
in $P\spmn_b$.
By Theorem~\ref{thm:monodromy},
the number $N\spmn$ of $\pi_1(\UUU, b)$-orbits in $P\spmn_b$
satisfies~\eqref{eq:estimate}, because $|W(E_7)/\{\pm 1\}|=1451520$.
\end{proof}
\subsection{Computation of $N\spmn$}\label{subsec:computationalexample}
Recall that $\Sigma_b$ is a negative-definite root lattice of type $E_7$.
Let $\Sigma$ be the negative-definite root lattice of type $E_7$ with the standard basis, and let $\Sigma\dual$  be its dual.
According to~\eqref{eq:Lu} and~\eqref{eq:Fu},
we define the subsets 
\[
\barL:= \set{v\in \Sigma\dual}{\intf{v, v}=-3/2}/\{\pm 1\}
\]
 of $\barSigma\dual:=\Sigma\dual /\{\pm 1\}$,
  and 
 \[
\barF:= \set{v\in \Sigma}{\intf{v, v}=-2}/\{\pm 1\}
\]
 of $\barSigma:=\Sigma/\{\pm 1\}$.
We then put 
\[
P\spmn:= S_0^m(\barL)\times S^n(\barF).
\]
The group $W(E_7)$ is generated by seven standard reflections.
The permutations  on  $\barL$ and on $\barF$   induced by these generators are easily calculated.
Hence the permutations on $P\spmn$  induced by these generators are also calculated.
Thus we can compute the orbit decomposition of $P\spmn$ by $W(E_7)$,
and obtain the number $N\spmn$ of deformation types of $\Qmn$-curves.
\begin{example}
The size $d\sp{(4,0)}$ of  $P^{(4,0)}$ is $20475$.
The group $W(E_7)$ decomposes this set into three orbits of sizes $315, 5040, 15120$.
Hence $N^{(4,0)}=3$.
\end{example}
\begin{example}
The size $d\sp{(0,4)}$ of  $P^{(0,4)}$ is $720720$.
The group $W(E_7)$ decomposes this set into $30$ orbits as follows:
{\small
\begin{eqnarray*}
720720= 63+ 
 945\times 3+
 1008\times 2+
 1890+2016+3780\times 2+ 5040\times 2+ 10080+\\+11340+15120\times 5+
 22680+30240\times 5+ 45360\times 2+90720+
      120960\times 2.
\end{eqnarray*}}%
Hence $N^{(0,4)}=30$.
\end{example}
\begin{example}
The size $d\sp{(2,2)}$ of  $P^{(2,2)}$ is $762048$.
The group $W(E_7)$ decomposes this set into 
$23$ orbits  as follows:
{\small
\begin{eqnarray*}
762048=378+1890+3780\times 3+6048+7560\times2 + 12096\times2 + 15120+ 22680+\\+ 30240\times 3 +45360\times 2+ 60480\times 4 +
120960\times 2.
\end{eqnarray*}}%
Hence $N^{(2,2)}=23$.
\end{example}

\begin{remark}
For the computation, we used~{\tt GAP}~\cite{GAP},
which is  good at computations of permutation groups.
\end{remark}
\subsection{Real quartic curves}\label{subsec:real}
\begin{definition}
Note that $H^2(\P^2, \Z)\cong \Z$ has a canonical generator, that is, 
the class of a line.
Let $C$ and $C\sprime$ be plane curves with the same homeomorphism type.
A homeomorphism $\sigma\colon (\P^2, C)\isom (\P^2, C\sprime)$ is said to be
\emph{orientation-preserving} (resp.~\emph{orientation-reversing}) 
if the action of $\sigma$ on $H^2(\P^2, \Z)$  is the identity (resp.~the multiplication by  $-1$).
\end{definition}
\begin{example}\label{example:alphastar}
Suppose that $\Qmn$-curves $Z_{\zeta}$ and $Z_{\zeta\sprime}$ are of the same deformation type.
Let $\alpha\colon I\to \ZZZ\spmn$ be a path from $\zeta$ to $\zeta\sprime$,
where $I:=[0,1]\subset\R$.
By the parallel transport along $\alpha$,
we obtain a homeomorphism $\alpha_{*}\colon (\P^2, Z_{\zeta})\isom (\P^2, Z_{\zeta\sprime})$.
It is obvious that $\alpha_{*}$ is orientation-preserving.
\end{example}
\begin{proposition}\label{prop:selfhomeo}
Every $\Qmn$-curve $Z_{\zeta}$ admits  an orientation-reversing self-homeomorphism 
$(\P^2, Z_{\zeta}) \isom (\P^2, Z_{\zeta})$.
\end{proposition}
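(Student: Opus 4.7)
The plan is to realise the desired orientation-reversing self-homeomorphism of $(\P^{2},Z_{\zeta})$ as the composition of two pieces: the standard complex conjugation $\tau\colon [x:y:z]\mapsto [\bar x:\bar y:\bar z]$ on $\P^{2}$, and a parallel-transport homeomorphism inside a single connected component of $\ZZZ\spmn$. Write $\kappa\colon \ZZZ\spmn\to\ZZZ\spmn$ for the anti-holomorphic involution sending $\zeta$ to the point $\bar\zeta$ whose associated curve is $\tau(Z_{\zeta})$. By definition $\tau$ induces a homeomorphism of pairs $(\P^{2},Z_{\zeta})\isom (\P^{2},Z_{\bar\zeta})$. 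This $\tau$ is orientation-preserving as a self-diffeomorphism of the real $4$-manifold $\P^{2}(\C)$ (antiholomorphic on a complex surface), but it restricts to orientation-reversing complex conjugation on any real line; equivalently $\tau^{*}\OOO(1)\cong\overline{\OOO(1)}$, so $\tau^{*}[\ell]=-[\ell]$ in $H^{2}(\P^{2},\Z)$. Hence $\tau$ is orientation-reversing in the paper's sense. Once I show that $\zeta$ and $\bar\zeta$ always lie in the same connected component of $\ZZZ\spmn$, a path $\alpha$ joining them yields an orientation-preserving homeomorphism $\alpha_{*}\colon(\P^{2},Z_{\zeta})\isom(\P^{2},Z_{\bar\zeta})$ by Example~\ref{example:alphastar}, and the composition $\alpha_{*}^{-1}\circ\tau$ is orientation-reversing from $(\P^{2},Z_{\zeta})$ to itself, as required.

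The main step, then, is to prove that $\kappa$ preserves each connected component of $\ZZZ\spmn$. I fix a real basepoint $b\in\UUU$, so that $\tau$ lifts to an anti-holomorphic involution $\tilde\tau\colon Y_{b}\to Y_{b}$. Then $\tilde\tau$ is an orientation-preserving diffeomorphism of $Y_{b}$, and $\tilde\tau^{*}$ is an integral isometry of $\SY_{b}=H^{2}(Y_{b},\Z)$. Since $\pi_{b}\circ\tilde\tau=\tau\circ\pi_{b}$ and $\tau^{*}[\ell]=-[\ell]$, I get $\tilde\tau^{*}(h_{b})=-h_{b}$, so $-\tilde\tau^{*}$ belongs to $\OG(\SY_{b},h_{b})\cong W(E_{7})$. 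The key observation is that the two isometries $\pm\tilde\tau^{*}$ induce identical maps on the quotient $\barSY_{b}=\SY_{b}/\{\pm 1\}$, and in particular on the subsets $\barL_{b},\barF_{b}\subset\barSY_{b}$ through which $\kappa$ acts on $P_{b}\spmn$. Consequently $\kappa$ acts on $P_{b}\spmn$ as the single element $-\tilde\tau^{*}\in W(E_{7})$.

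By Theorem~\ref{thm:monodromy} the monodromy of $\pi_{1}(\UUU,b)$ on $P_{b}\spmn$ has image the entire group $W(E_{7})$, so each $W(E_{7})$-orbit is trivially preserved setwise by every element of $W(E_{7})$, and hence by $\kappa$. By Lemma~\ref{lem:smooth} these orbits correspond bijectively to the connected components of $\ZZZ\spmn$, so $\kappa$ preserves each component, completing the reduction. The delicate point, and the only step that requires any real care, is the sign bookkeeping in the previous paragraph: $\tilde\tau^{*}$ itself is \emph{not} an element of $\OG(\SY_{b},h_{b})$, only its negative is, and one must verify that this sign becomes invisible on passing to the quotient $\barSY_{b}$ on which the combinatorial parameter space $P_{b}\spmn$ genuinely lives. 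Once this is dispatched, the proof is a formal consequence of Theorem~\ref{thm:monodromy} and the parallel-transport construction.
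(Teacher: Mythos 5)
Your proof is correct, but it reaches the key step --- that a curve and its complex conjugate always lie in the same connected component of $\ZZZ\spmn$ --- by a genuinely different route than the paper. The paper invokes the Zeuthen--Klein classification (Theorem~\ref{thm:ZK}) to pick a special real quartic $Q_r$ with four non-nested ovals and all $28$ bitangents real; for such an $r$ the $Y$-lifts of bitangents are conjugation-invariant, their classes span $\SY_r$, so conjugation acts as $-1$ on $\SY_r$ and hence acts \emph{trivially} on $P\spmn_r$ (this is the content of Lemma~\ref{lem:tausamefam}). You instead take an arbitrary real quartic $b$ and observe that whatever isometry $-\tilde\tau^{*}\in\OG(\SY_b,h_b)$ conjugation induces, it lies in $W(E_7)$, which by Harris's theorem is exactly the monodromy image, so it automatically preserves every orbit in $P\spmn_b$; this bypasses Theorem~\ref{thm:ZK} and Lemma~\ref{lem:tausamefam} entirely and is arguably cleaner, at the cost of proving only orbit-preservation rather than triviality of the action. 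Two small points of bookkeeping you should tighten: first, $\barSY_b$ is $\SY_b/\gens{\iota_b}$, where $\iota_b$ fixes $h_b$ and is $-1$ only on $\Sigma_b$, not $\SY_b/\{\pm1\}$ --- the sign ambiguity you want to kill genuinely disappears only after projecting $\barL_b$ and $\barF_b$ into $\Sigma_b\dual/\{\pm1\}$ and $\Sigma_b/\{\pm1\}$ as in~\eqref{eq:Lu} and~\eqref{eq:Fu} (note that $\tilde\tau^{*}$ itself does not even preserve the subset $L_b\subset\SY_b$, since it sends $\intfY{v,h_b}=1$ to $\intfY{v,h_b}=-1$); second, to pass from ``conjugation preserves each $W(E_7)$-orbit in $P\spmn_b$'' to ``conjugation preserves each connected component of $\ZZZ\spmn$'' you should record that $\theta$ intertwines the conjugation on $\ZZZ\spmn$ with the induced conjugation on $\PPP\spmn$, i.e.\ that conjugation carries the connected family $[\barc]\in\barF_u$ to the connected family $[\tau[\barc]]\in\barF_{\bar u}$. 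Both are easy, and with them your argument is a complete and valid alternative proof.
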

For the proof of Proposition~\ref{prop:selfhomeo},
we use  a classical result on real quartic curves.
We give a structure of the $\R$-scheme to $\P^2$.
We denote by $\Gamma_{\R}(d)$ the space of homogeneous polynomials of degree $d$ on $\P^2$
with real coefficients,
and consider the real projective space $\P_{*} (\Gamma_{\R}(4))$ as a closed subset of 
$\P_{*} (\Gamma(4))$.
We then put
\[
\UUUR:=\UUU\cap \P_{*} (\Gamma_{\R}(4)).
\]
The topological types of smooth real quartic curves are classified by Zeuthen and Klein, 
and the result is summarized  in~\cite[Theorem 1.7]{PSV2011}.
Using this result, we obtain the following:
\begin{theorem}[Zeuthen~(1873) and Klein~(1876)]\label{thm:ZK}
There exists a unique  connected component $\UUURf$ of $\UUUR$ 
consisting of points $u\in \UUUR$
such that the real plane curve $Q_{u}(\R)$ is a union  of $4$ ovals.
If $u\in \UUURf$,
the ovals in $Q_{u}(\R)$ are pairwise non-nested,
and every bitangent of $Q_u$ is defined over $\R$.
\qed
\end{theorem}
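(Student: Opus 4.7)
The plan is to deduce this theorem directly from the classical Zeuthen--Klein topological classification of smooth real plane quartic curves, as cited from [PSV2011, Theorem~1.7]. There is no new geometric content to supply; the work consists of translating the classification into the language of connected components of $\UUUR$ and reading off the relevant row.

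My first step is to describe $\pi_0(\UUUR)$. The Zeuthen--Klein classification asserts that the rigid isotopy classes of smooth real plane quartics---which coincide with the connected components of $\UUUR$, since $\UUUR$ is the complement of a real algebraic hypersurface in $\P_{*}(\Gamma_{\R}(4))$---are in bijection with the possible topological types of the real locus $Q_u(\R)$: the empty set, one oval, two non-nested ovals, two nested ovals, three ovals, or four ovals. The last case saturates the Harnack bound for a smooth curve of genus $3$, and corresponds to a single rigid isotopy class. This immediately yields both the existence and the uniqueness of the component $\UUURf$. The classification also records that when $Q_u(\R)$ consists of four ovals they are automatically pairwise non-nested; this fact can in addition be checked directly by Bezout, since a line through the innermost of three pairwise nested ovals would meet $Q_u$ in at least six real points, contradicting $\deg Q_u = 4$.

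My second step is to obtain the reality of the $28$ bitangents from the classical counting formula, also due to Zeuthen and included in [PSV2011]: the number of real bitangents of a smooth real plane quartic is a rigid isotopy invariant, taking values in $\{4, 8, 16, 28\}$, and the maximum value $28$ is attained precisely on M-quartics, i.e.~precisely when $Q_u(\R)$ consists of four ovals. Complex conjugation permutes the $28$ bitangents of $Q_u$; if $u\in\UUURf$ then all $28$ of them are fixed by this involution, and hence every bitangent of $Q_u$ is defined over $\R$.

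The only real obstacle is interpretive rather than mathematical: I must confirm that [PSV2011, Theorem~1.7] genuinely classifies rigid isotopy classes (and therefore enumerates the connected components of $\UUUR$), not merely the homeomorphism types of the real locus $Q_u(\R)$ up to ambient homeomorphism. Once that identification is in place, existence and uniqueness of $\UUURf$, the non-nestedness of the four ovals, and the reality of all $28$ bitangents are read off simultaneously from the four-oval row of the classification table, completing the proof.
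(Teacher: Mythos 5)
Your proposal is correct and follows essentially the same route as the paper, which simply invokes the Zeuthen--Klein classification as summarized in~\cite[Theorem 1.7]{PSV2011} and offers no further argument. Your added details (the identification of rigid isotopy classes with connected components of $\UUUR$, the Bezout check for non-nestedness, and the count $\{4,8,16,28\}$ of real bitangents with $28$ attained exactly for M-quartics) are accurate elaborations of that citation rather than a different method.
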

\begin{remark}
For  beautiful pictures of  real plane quartic curves with real $28$ bitangents,
see~\cite{Kuwata2005} and~\cite[Section 10.5]{MSBook2019}.
These pictures are in fact defined over $\Q$,
and were obtained by the theory of Mordell-Weil lattices.
\end{remark}
For an algebraic variety $V$ defined over $\R$,
we denote by $H^*(V, \Z)$ the cohomology ring
of the topological space $V(\C)$ of $\C$-valued points of $V$,
by 
\[
\tau_{V}\colon V(\C)\isom V(\C)
\]
the self-homeomorphism of $V(\C)$ 
obtained by the complex conjugation, 
and by $V_{\C}$ the variety $V\tensor_{\R}\C$ defined over $\C$.
Let $S$ be an algebraic surface defined over $\R$,
and let $C$ be a reduced irreducible  curve on $S_{\C}$.
Then there exists a unique  reduced irreducible curve $C\sprime$ on $S_{\C}$
such that $\tau_S$ induces an orientation-reversing 
homeomorphism $C(\C)\isom C\sprime(\C)$.
We denote this curve $C\sprime$ by  $\tau_S[C]$.
Then we have 
\[
[\tau_S[C]]=-\tau_S^*([C])
\]
 in $H^2(S, \Z)$.
If $C$ is also defined over $\R$, then $\tau_S[C]=C$,
and hence $\tau_S^*([C])=-[C]$.
If $H^2(S, \Z)$ is generated by classes 
of curves defined over $\R$,
then $\tau_S$ acts on $H^2(S, \Z)$ as the multiplication by $-1$,
and hence, for any curve $C$ (not necessarily defined over $\R$),
we have $[\tau_S[C]]=[C]$ in $H^2(S, \Z)$.
\begin{lemma}\label{lem:tausamefam}
Let $r$ be a point of $\UUURf$.
If $\barc$ is a $4$-tangent conic of $Q_r$,
then the $4$-tangent conic $\tau_{\P^2}[\barc]$ of $\tau_{\P^2}[Q_r]=Q_r$
is in the same connected family as $\barc$.
\end{lemma}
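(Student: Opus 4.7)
The plan is to translate the statement into the language of the Jacobian via the bijection $\barF_r \cong \Pic^0(Q_r)[2] - \{0\}$ from Remark~\ref{rem:Pic}, and then to observe that complex conjugation acts as the identity on $\Pic^0(Q_r)[2]$ when $r \in \UUURf$. Under that bijection, a $4$-tangent conic $\barc$ maps to the class $\xi(\barc) := [\Theta_r(\barc) - \Lambda_r] \in \Pic^0(Q_r)[2]-\{0\}$. Choosing $\Lambda_r$ to be a real line section (which is possible because $Q_r$ is defined over $\R$), the equality $\Theta_r(\tau_{\P^2}[\barc]) = \tau_*(\Theta_r(\barc))$ is immediate, where $\tau_*$ denotes complex conjugation on divisors and divisor classes of $Q_r$. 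Hence $\xi(\tau_{\P^2}[\barc]) = \tau_*(\xi(\barc))$, and the lemma is equivalent to the assertion that $\tau_*$ acts trivially on $\Pic^0(Q_r)[2]$.

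To prove this triviality, I would exploit that $Q_r$ is an M-curve: by Theorem~\ref{thm:ZK} it has genus $g = 3$ with $g+1 = 4$ real ovals, and all $28$ of its bitangents are defined over $\R$. For each real bitangent $\barl_i$, the reduced tangency divisor $E_i := \Theta_r(\barl_i)$ is a real effective divisor of degree $2$ satisfying $2 E_i \sim \Lambda_r$; consequently $[E_i - E_j] \in \Pic^0(Q_r)[2]$ is $\tau_*$-fixed for every pair $i,j$. By the classical correspondence between bitangents and odd theta characteristics of a non-hyperelliptic genus-$3$ curve, the $28$ classes $\{[E_i - E_j] : i = 1, \ldots, 28\}$ (for a fixed $j$) generate the $\F_2$-vector space $\Pic^0(Q_r)[2]$ of dimension $2g = 6$; this can be made explicit by choosing an Aronhold system of $7$ bitangents. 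Therefore $\tau_*$ fixes a generating set of $\Pic^0(Q_r)[2]$, hence acts trivially on the whole group, and $\xi(\barc) = \xi(\tau_{\P^2}[\barc])$.

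The main obstacle in this plan is the spanning statement at the end of the second paragraph. It can be bypassed by appealing to the classical Harnack--Weichold theorem, which directly asserts that $\tau_*$ is trivial on the $2$-torsion of the Jacobian of any M-curve; since $Q_r$ is an M-curve, this immediately yields the required triviality and shortens the proof.
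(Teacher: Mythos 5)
Your proposal is correct, but it proves the lemma by a genuinely different route than the paper. The paper stays on the del Pezzo double cover: it chooses a real equation $\varphi$ of $Q_r$ that is positive outside the ovals, so that $Y_r=\{w^2=\varphi\}$ is real and each $Y$-lift $l$ of each of the $28$ (real) bitangents satisfies $\tau_Y[l]=l$; since these $56$ classes span $\SY_r=H^2(Y_r,\Z)$, the conjugation acts as $-1$ on $\SY_r$, whence $[\tau_Y[C]]=-\tau_Y^*([C])=[C]$ for every curve $C$, and in particular the class in $F_r$ of a $Y$-lift of $\barc$ --- which determines the connected family --- is preserved. You instead descend to the curve itself via Remark~\ref{rem:Pic}, identifying $\barF_r$ with $\Pic^0(Q_r)[2]-\{0\}$, and show that $\tau_*$ is trivial on $\Pic^0(Q_r)[2]$ because that group is generated by differences of the (real) tangency divisors of the $28$ bitangents, i.e.\ of the odd theta characteristics; alternatively, because $Q_r$ is an M-curve so that $\Jac(Q_r)(\R)\cong(\R/\Z)^3\times(\Z/2)^3$ already contains all $2^{6}$ two-torsion points. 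Both arguments ultimately rest on the same input --- the reality of all $28$ bitangents on the component $\UUURf$ --- but yours trades the paper's self-contained surface-cohomology computation (including the sign analysis of $\varphi$ needed to see that conjugation fixes each individual lift rather than swapping the two lifts, a subtlety your approach avoids entirely) for reliance on Remark~\ref{rem:Pic}, which the paper states without proof, and on classical facts about theta characteristics and real Jacobians external to the paper. The spanning claim you worry about is fine: the $28$ odd theta characteristics form the zero locus of a nondegenerate quadratic form on $\F_2^6$, which is contained in no hyperplane; and the M-curve fact you invoke is correct (it is usually attributed to Weichold/Klein, cf.\ Gross--Harris, rather than called the ``Harnack--Weichold theorem''). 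Either way your reduction $\xi(\tau_{\P^2}[\barc])=\tau_*(\xi(\barc))$ and the triviality of $\tau_*$ on the $2$-torsion do yield the lemma.
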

\begin{proof}
Note that, for $\varphi\in \Gamma_{\R}(4)$ and $x\in \P^2(\R)$,
the sign of $\varphi(x)$ is well-defined, 
because $\lambda^4>0$ for any $\lambda\in \R\sptimes$.
We choose a defining equation $\varphi\in \Gamma_{\R}(4)$ of $Q_r$ in 
such a way that $\varphi(x)>0$ for any point $x$ of $\P^2(\R)$ in the outside of the ovals of $Q_r(\R)$.
We let $Y_r$ be defined over $\R$ by $w^2=\varphi$,
and consider the self-homeomorphism  
$\tau_{Y}\colon Y_r(\C)\isom Y_r(\C)$
given by the complex conjugation.
For any bitangent $\barl$ of $Q_r$,
each of its $Y$-lifts $l$
satisfies $\tau_{Y}[l]=l$,
because $\varphi(x)\ge 0$ for any point $x$ of $\barl(\R)$.
Since the classes of these curves $l$ span $\SY_r=H^2(Y_r, \Z)$,
we see that $\tau_Y$ acts on $\SY_r$ as the multiplication by $-1$.
Therefore, for any curve $C$ on $Y_r$, we have $[\tau_Y[C]]=[C]$.
In particular, if $c\subset Y_r$ is a $Y$-lift of $\barc$,
then $\tau_Y[c]$ 
is a $Y$-lift of the $4$-tangent conic  $\tau_{\P^2}[\barc]$.
Then   $[\tau_Y[c]]=[c]$ in $\SY_r$ 
implies that $\tau_{\P^2}[\barc]$ and $\barc$ belong to 
the same connected family of 
$4$-tangent conics.
\end{proof}
\begin{proof}[Proof of Proposition~\ref{prop:selfhomeo}]
Since $\UUURf$ is  open in $\P_{*} (\Gamma_{\R}(4))$, 
it follows that $\UUURf$ is Zariski dense in $\UUU$, and hence 
there exists a point $r\in \UUURf \cap \UUU\sprime$.
By Lemma~\ref{lem:UUUsprime},
we see that 
$\varpi\inv (r)$ intersects every connected component of $\ZZZ\spmn$.
Let $\xi$ be a point of $\varpi\inv (r)$ that belongs to the same connected component 
as $\zeta$,
and let
\[
Z_{\xi}=Q_r+\barl_1\sprime+ \cdots +\barl_m\sprime+\barc_1\sprime+ \cdots +\barc_n\sprime
\]
be the decomposition of $Z_{\xi}$ into irreducible components.
Remark that $Q_r$ and all of its bitangents are defined over $\R$ by the definition of $\UUURf$ (see Theorem~\ref{thm:ZK}).
We choose a path $\alpha\colon I\to \ZZZ\spmn$
from $\zeta$ to $\xi$.
Then we obtain an orientation-preserving homeomorphism
\[
\alpha_{*}\colon (\P^2, Z_{\zeta})\isom (\P^2, Z_{\xi}).
\]
For simplicity,
we write $\tau$ instead of $\tau_{\P^2}$.
We have an orientation-reversing homeomorphism
\[
\tau \colon (\P^2, Z_{\xi})\isom (\P^2, \tau[Z_{\xi}])
\]
obtained by the complex conjugation.
Since
\[
\tau[Z_{\xi}]=
Q_r+\barl_1\sprime+ \cdots +\barl_m\sprime+\tau[\barc_1\sprime]+ \cdots +\tau[\barc_n\sprime],
\]
and,
for $j=1, \dots, n$,
the $4$-tangent conic  $\tau[\barc_j\sprime]$  
of $\tau[Q_r]=Q_r$ belongs to the same connected family as $\barc_j\sprime$ by Lemma~\ref{lem:tausamefam},
we see that $\Qmn$-curves $\tau[Z_{\xi}]$ and $Z_{\xi}$ have the same  deformation type,
and we have 
an orientation-preserving homeomorphism
\[
\beta_{*}\colon (\P^2, \tau[Z_{\xi}])\isom (\P^2, Z_{\xi}).
\]
Composing $\alpha_{*}$,  $\tau$, $\beta_{*}$ and $\alpha_{*}\inv$,
we obtain an orientation-reversing self-homeomorphism 
of $(\P^2, Z_{\zeta})$.
\end{proof}
%

%
\subsection{Homeomorphism types}\label{subsec:Homeotypes}
Let $\zeta$ be a point of $\ZZZ\spmn$ 
such that $Z_{\zeta}$ has the decomposition 
\begin{equation*}\label{eq:Z}
Z=Q +\barl_1+\cdots +\barl_m+\barc_1+\cdots +\barc_n.
\end{equation*}
We consider another point $\zeta\sprime\in \ZZZ\spmn$ with
the decomposition 
\begin{equation*}\label{eq:Zsprime}
Z_{\zeta\sprime}=Q_{u\sprime} +\barl\sprime_1+\cdots +\barl\sprime_m+\barc\sprime_1+\cdots +\barc\sprime_n.
\end{equation*}
%
Recall that the involution $\iota_u$ of $Y_u$ acts on  
the orthogonal complement $\Sigma_u$ of $h_u\in \SY_u$ as $-1$.
Let  $g\colon \SY_u \isom  \SY_{u\sprime}$  be an isometryof lattices.
Suppose that $g$ 
maps $h_u$ to $h_{u\sprime}$. 
Then we have 
$g\circ \iota_{u}=\iota_{u\sprime}\circ g$.
Moreover, by definitions~\eqref{eq:Lu}~and~\eqref{eq:Fu}, 
the  isometry $g$  maps $L_u$ to $L_{u\sprime}$ and $F_u$ to $F_{u\sprime}$.
Therefore 
$g$ induces a bijection $P\spmn_u\isom  P\spmn_{u\sprime}$.
%
\par
Theorem~\ref{thm:main} is an immediate consequence of the following:
\begin{theorem}
The following are equivalent:
\begin{enumerate}[{\rm (i)}]
\item $\zeta$ and $\zeta\sprime$ belong to the same connected component of $\ZZZ\spmn$,
\item $\theta(\zeta)$ and $\theta(\zeta\sprime)$ belong to the same connected component of $\PPP\spmn$,
\item there exists an isometry $g\colon \SY_u \isom  \SY_{u\sprime}$ of lattices 
that maps $h_u$ to $h_{u\sprime}$ and such that
the induced bijection $P\spmn_u\isom  P\spmn_{u\sprime}$ maps $\theta(\zeta)$ to $\theta(\zeta\sprime)$, 
and
\item  there exists a homeomorphism $(\P^2, Z_{\zeta})\isom(\P^2, Z_{\zeta\sprime})$.
\end{enumerate} 
\end{theorem}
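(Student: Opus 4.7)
The plan is to prove the cycle of implications (i)$\Rightarrow$(ii)$\Leftrightarrow$(iii)$\Rightarrow$(i) together with (i)$\Rightarrow$(iv)$\Rightarrow$(iii), with the single substantive step being (iv)$\Rightarrow$(iii).

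The easy implications run as follows. The equivalence (i)$\Leftrightarrow$(ii) is immediate from Lemma~\ref{lem:smooth}: since $\theta$ is smooth with irreducible (hence connected) fibers over its image, the connected components of $\ZZZ\spmn$ correspond bijectively to those of $\theta(\ZZZ\spmn)\subset \PPP\spmn$. The equivalence (ii)$\Leftrightarrow$(iii) follows from Theorem~\ref{thm:monodromy}: choose any path from $u := \varpi(\zeta)$ to $u' := \varpi(\zeta')$ in $\UUU$ and let $g_0\colon \SY_u \isom \SY_{u'}$, with $g_0(h_u) = h_{u'}$, be the parallel transport of $\SSSY$ along it; composing $g_0$ with monodromy loops based at $u'$ realises, by Harris's surjectivity onto $\OG(\SY_{u'}, h_{u'}) \cong W(E_7)$, every isometry $\SY_u \isom \SY_{u'}$ sending $h_u \mapsto h_{u'}$. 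Hence (iii) is equivalent to the statement that $\theta(\zeta)$ and $\theta(\zeta')$ can be joined by a path in $\PPP\spmn$, i.e., to (ii). Finally, (i)$\Rightarrow$(iv) is the parallel-transport construction recalled in Example~\ref{example:alphastar}.

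The main obstacle is (iv)$\Rightarrow$(iii). Given a homeomorphism $\sigma\colon (\P^2, Z_\zeta) \isom (\P^2, Z_{\zeta'})$, after pre-composing if necessary with an orientation-reversing self-homeomorphism supplied by Proposition~\ref{prop:selfhomeo}, I may assume $\sigma$ is orientation-preserving. The irreducible decomposition of $Z_\zeta$ is topologically identifiable: $Q$ is the unique component of positive genus (namely $3$), and among the rational components each bitangent contains exactly two tacnodes of $Z_\zeta$ (its contacts with $Q$) whereas each $4$-tangent conic contains four. Because tacnodes and ordinary nodes are distinguishable by their local embedded topology in $\P^2$ (for example, via the linking number of the two smooth branches in a small $3$-sphere about the singularity), $\sigma$ sends $Q$ to $Q_{u'}$, permutes the bitangents $\barl_i \mapsto \barl_{\rho(i)}'$, and permutes the $4$-tangent conics $\barc_j \mapsto \barc_{\tau(j)}'$ for some permutations $\rho$ and $\tau$.

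To extract the lattice isometry, I would then lift $\sigma$ through the double covers. Since $\pi_1(\P^2 \setminus Q_u) \cong \Z/4$ has a unique index-$2$ subgroup, the covering $\pi_u\colon Y_u \to \P^2$ is topologically intrinsic to the pair $(\P^2, Q_u)$, and $\sigma$ lifts to an orientation-preserving homeomorphism $\tilde\sigma\colon Y_u \isom Y_{u'}$ with $\pi_{u'}\circ \tilde\sigma = \sigma\circ \pi_u$, uniquely up to the deck involution $\iota_{u'}$. Set $g := (\tilde\sigma^*)^{-1}\colon \SY_u \isom \SY_{u'}$. Writing $\ell$ for the class of a line in $H^2(\P^2,\Z)$, the orientation-preservation of $\sigma$ gives $\sigma^*(\ell) = \ell$, so $\tilde\sigma^*(h_{u'}) = \pi_u^*\sigma^*(\ell) = h_u$, i.e.\ $g(h_u) = h_{u'}$. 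Moreover $\tilde\sigma$ sends the pair of $Y$-lifts of $\barl_i$ to the pair of $Y$-lifts of $\barl_{\rho(i)}'$, so the induced map on $\barSY$ carries $\barl_i \in \barL_u$ to $\barl_{\rho(i)}' \in \barL_{u'}$; the analogous argument for $4$-tangent conics gives $g([\barc_j]) = [\barc_{\tau(j)}']$ in $\barF_{u'}$. Together these identities say that the induced bijection $P^{(m,n)}_u \isom P^{(m,n)}_{u'}$ sends $\theta(\zeta)$ to $\theta(\zeta')$, establishing (iii). The two delicate points, and the places where the hardest work lies, are the topological separation of tacnodes from nodes (and hence of bitangents from $4$-tangent conics) and the existence of the branched-cover lift $\tilde\sigma$; both rest on classical facts about complements of plane curves and about branched coverings of $\P^2$.
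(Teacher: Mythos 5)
Your proposal is correct and follows essentially the same route as the paper: (i)$\Leftrightarrow$(ii) via the smoothness of $\theta$ with connected fibers, (ii)$\Leftrightarrow$(iii) via parallel transport plus Harris's surjectivity of the monodromy onto $W(E_7)$, (i)$\Rightarrow$(iv) via Example~\ref{example:alphastar}, and (iv)$\Rightarrow$(iii) by first reducing to an orientation-preserving homeomorphism using Proposition~\ref{prop:selfhomeo} and then lifting it to $Y_u\isom Y_{u\sprime}$ to obtain the isometry carrying $h_u$ to $h_{u\sprime}$ and $Y$-lifts to $Y$-lifts. Your added details (topological identification of the components via genus and tacnode counts, and existence of the lift via the unique index-$2$ subgroup of $\pi_1(\P^2\setminus Q_u)\cong\Z/4$) merely flesh out steps the paper asserts without elaboration.
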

\begin{proof}
By Proposition~\ref{prop:selfhomeo},
condition ${\rm (iv)}$ is equivalent to the following:
\begin{itemize}
\item[${\rm (iv)}\sprime$]
there exists 
an orientation-preserving homeomorphism $(\P^2, Z_{\zeta})\isom(\P^2, Z_{\zeta\sprime})$.
\end{itemize} 
We will show that ${\rm (i)}$, ${\rm (ii)}$, ${\rm (iii)}$ and ${\rm (iv)}\sprime$
are equivalent.
The implication ${\rm (i)}\Longleftrightarrow{\rm (ii)}$
follows from Lemma~\ref{lem:smooth}, and  ${\rm (i)}\Longrightarrow{\rm (iv)\sprime}$  
follows from Example~\ref{example:alphastar}.
\par
We show ${\rm (iv)\sprime}\Longrightarrow{\rm (iii)}$.
Suppose that 
$\sigma \colon (\P^2, Z_{\zeta})\isom(\P^2, Z_{\zeta\sprime})$
is an orientation-preserving homeomorphism.
We can assume, after renumbering the curves,
that $\sigma$ induces homeomorphisms 
$\barl_i\isom \barl_i\sprime$ and $\barc_j\isom \barc_j\sprime$
that preserve the orientation.
We have 
a homeomorphism
$\sigma_Y\colon Y_u\isom Y_{u\sprime}$
that covers $\sigma$, and $ \sigma_Y$ induces an isometry 
\[
\sigma_{\SY}\colon \SY_u\isom \SY_{u\sprime},
\] 
which maps $h_u$ to $h_{u\sprime}$.
If  $l_i\subset Y_u$  is  a $Y$-lift of  
a bitangent $\barl_i\subset Z_{\zeta}$,
there exists a $Y$-lift $l_i\sprime\subset Y_{u\sprime}$ of the bitangent $\barl_i\sprime\subset Z_{\zeta\sprime}$
such that $\sigma_Y$ induces a homeomorphism $l_i\isom l_i\sprime$ preserving the orientation.
In particular, we have $\sigma_{\SY}([l_i])=[ l_i\sprime]$.
The same holds for
a $Y$-lift  $c_j\subset Y_u$ of a $4$-tangent conic $\barc_j\subset Z_{\zeta}$.
Hence the bijection $P\spmn_{u}\isom P\spmn_{u\sprime}$  induced by  the isometry $\sigma_{\SY}$ maps 
$\theta(\zeta)$ to $\theta(\zeta\sprime)$.
Thus ${\rm (iii)}$ holds.
\par
We show ${\rm (iii)}\Longrightarrow{\rm (ii)}$.
Suppose that ${\rm (iii)}$ holds.
We choose a path $\beta\colon I\to \UUU$ from $u$ to the base-point $b$
and a path $\beta\sprime\colon I\to \UUU$ from $u\sprime$ to $b$,
and consider  the isometries 
\[
\beta_{*}\colon \SY_u\isom \SY_b, \quad
\beta\sprime_{*}\colon \SY_{u\sprime}\isom \SY_b
\]
obtained by the parallel transports along $\beta$ and $\beta\sprime$.
Note that $\beta_{*}(h_u)=h_b$ and $\beta\sprime_{*}(h_{u\sprime})=h_b$.
Hence $\beta\sprime_*\circ g \circ \beta\inv_{*}$ is an element of $\OG(\SY_b, h_b)$.
By Theorem~\ref{thm:monodromy}, there exists a loop $\alpha\colon I\to \UUU$ 
with the base point $b$ such that
\[
\alpha_*=\beta\sprime_*\circ g \circ \beta\inv_{*}.
\]
Therefore the isometry $g\colon \SY_u\isom \SY_{u\sprime}$ is equal to 
the parallel transport $\gamma_{*}$ along the path $\gamma:=\beta^{\prime-1}\alpha \beta$ from $u$ to $u\sprime$.
Let 
\[
\tilgamma\colon I\to \PPP\spmn
\]
 be the lift of $\gamma$ such that $\tilgamma(0)=\theta(\zeta)$.
Since $g=\gamma_*$ maps $\theta(\zeta)$ to $\theta(\zeta\sprime)$,
we see that $\tilgamma(1)=\theta(\zeta\sprime)$.
Therefore $\theta(\zeta)$ and $\theta(\zeta\sprime)$ are in the same connected component of $\PPP\spmn$.
\end{proof}
\section{Geometry of the $K3$ surface $X_u$}\label{sec:familyofconics2}
We investigate the connected families of  $4$-tangent conics
more closely 
for a \emph{general} point $u\in \UUU$.
Our main result of this section is as follows.
\begin{theorem}\label{thm:63}
Suppose that $u\in \UUU$ is general.
Then each connected family  of $4$-tangent conics $\barc$ of $Q_u$
is parameterized by a  rational curve minus $12+6$ points.
A member 
$\barc$ of this family becomes  a $3$-tangent conic
at  each of $12$ punctured points, 
and 
$\barc$  degenerates into a union of two distinct bitangents
at  each of  the remaining $6$ punctured points.
\end{theorem}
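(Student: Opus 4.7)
The plan is to analyze, for each $v \in F_u$ representing a connected family of $4$-tangent conics, the elliptic fibration $\phi_v : X_u \to \P^1$ constructed in the proof of Theorem~\ref{thm:barCCC}. By that construction, the fiber $\Phi_u\inv(v) \subset C_u$ is identified with the open subset of the base $\P^1$ over which $\phi_v$ is smooth, so the problem reduces to classifying and counting the singular fibers of $\phi_v$ and matching each to a degeneration of the associated conic $\barc = \gamma_u(f)$.

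For the classification, I would repeat the $(d,\delta)$ dichotomy from the proof of Theorem~\ref{thm:barCCC} applied to an arbitrary (possibly singular) fiber $f$: the image $\gamma_u(f) \subset \P^2$ is a conic $\barc$ meeting $Q_u$ with total multiplicity $8$, and the only degenerations are (a) $\barc$ is smooth but $3$-tangent, with intersection pattern $(4,2,2)$, or (b) $\barc = \barl_1 + \barl_2$ is a union of two distinct bitangents. In case (a), a $Y$-lift $c$ is a smooth rational curve meeting the branch divisor $R$ of $\eta_u$ with multiplicities $(2,1,1)$, so $\eta_u\inv(c) \to c \cong \P^1$ is a double cover branched only at the two odd-multiplicity points; it is itself rational, but the two sheets come together at the preimage of the tangency, producing a nodal rational curve of arithmetic genus $1$, i.e., a Kodaira fiber of type $I_1$. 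In case (b), a $Y$-lift is $c = l_1 + l_2$ with $\intfY{l_1, l_2} = 1$, and the $\eta_u$-pullback is $\till_1 + \till_2$ with $\intfX{\till_1, \till_2} = 2\intfY{l_1, l_2} = 2$; since for general $u$ the point $l_1 \cap l_2$ lies off the branch divisor, the two intersections upstairs are transverse, yielding a Kodaira fiber of type $I_2$.

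For the counts, type (b) is extracted directly from the proof of Theorem~\ref{thm:monodromy}: $v$ admits exactly $6$ representations $v = [l] + [l']$ with $\intfY{l, l'} = 1$ and the $12$ lifts pairwise distinct, so they yield $6$ distinct pairs of bitangents and hence $6$ type-$I_2$ fibers. For type (a), I would invoke the identity $\chi(X_u) = 24$ for the $K3$ surface together with $\chi(I_1) = 1$ and $\chi(I_2) = 2$: if $N_a$ denotes the number of $3$-tangent degenerations, then
\[
N_a \cdot 1 + 6 \cdot 2 = 24,
\]
so $N_a = 12$, giving the stated $12 + 6 = 18$ punctured points.

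The hard part will be ruling out, for general $u$, any singular fibers beyond these $I_1$ and $I_2$ types, such as cuspidal fibers of type $II$ or cycles $I_n$ with $n \geq 3$. I would invoke the Shioda--Tate formula: for general $u$ the Picard rank of $X_u$ equals $\mathrm{rk}\,\eta_u^*(\SY_u) = 8$, and the six $I_2$ fibers already contribute $\sum_t (m_t - 1) = 6$ to
\[
\rho(X_u) = 2 + \mathrm{rk}(\mathrm{MW}(\phi_v)) + \sum_t (m_t - 1),
\]
forcing $\mathrm{rk}(\mathrm{MW}(\phi_v)) = 0$ and leaving no room for further reducible fibers; moreover, replacing any $I_1$ fiber by a cuspidal type-$II$ fiber would violate the Euler characteristic count. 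Establishing the generic value $\rho(X_u) = 8$ by a Noether--Lefschetz argument within the $14$-dimensional family $\UUU$ is the delicate ingredient; alternatively, one could verify the fiber types on a single explicit smooth quartic (using the well-understood $K3$ geometry) and then propagate to general $u$ by semicontinuity of the Kodaira type combined with the Euler characteristic constraint.
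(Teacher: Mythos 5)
Your overall strategy matches the paper's: both reduce the theorem to classifying the singular fibers of the genus-one fibration $\phi_v\colon X_u\to\P^1$, both obtain the $6$ reducible fibers from the $6$ decompositions $v=[l]+[l\sprime]$ with $\intfY{l, l\sprime}=1$, both use the tangency analysis of Proposition~\ref{prop:lifts2} (your case (a)) to see that a $3$-tangent degeneration yields a nodal fiber, and both finish with the Euler number $24=6\cdot 2+12\cdot 1$. The genuine difference is in how extra singular fibers are excluded. The paper first proves $\SX_u=\eta_u^*(\SY_u(2))$ for general $u$ via Kondo's period computation (Proposition~\ref{prop:Kondo}), then computes the nef cone of $X_u$ by linear programming to show $\Rats(X_u)=\tilL_u$ (Proposition~\ref{prop:Rats}); the set of $\till\in\tilL_u$ orthogonal to the fiber class has dual graph of type $6\widetilde{A}_1$, which immediately gives exactly $6$ reducible fibers with two components each. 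You instead propose Shioda--Tate together with $\rho(X_u)=8$. That is a legitimate alternative, and you correctly isolate $\rho(X_u)=8$ as the crux; the paper supplies precisely this input from Kondo's result rather than from a Noether--Lefschetz argument.

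Two points need repair. First, Shioda--Tate in the form $\rho=2+\mathrm{rk}\,\mathrm{MW}+\sum_t(m_t-1)$ presupposes a section, and the paper proves that $\phi_v$ has none; you must pass to the Jacobian fibration (which for a $K3$ surface has the same Picard number and the same configuration of fiber components) before applying the formula. Second, the claim that a cuspidal fiber of type $\typeII$ "would violate the Euler characteristic count" is false: since $\chi(\typeII)=2$, the total $\sum\chi=24$ is equally consistent with, say, ten $\typeI_1$ fibers and one $\typeII$ fiber alongside the six $\typeI_2$'s. Type $\typeII$ has to be excluded geometrically, as the paper does: every irreducible singular fiber maps to a splitting conic, which by Lemma~\ref{lem:2222or422} must be $3$-tangent, and Proposition~\ref{prop:lifts2} shows its pullback is one-nodal, hence of type $\typeI_1$. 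Your own case (a) discussion already contains this argument, so the fix is only to rely on it instead of the Euler number; but as written the justification is a gap. Finally, "semicontinuity of the Kodaira type" from a single explicit quartic is not a safe propagation mechanism (fiber types do not vary semicontinuously in a way that would pin down the general member); the clean route to the generic statement is the lattice-theoretic one the paper takes.
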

For the proof, we add the following easy result to Proposition~\ref{prop:lifts}.
\begin{proposition}\label{prop:lifts2}
Let $\barc$ be a $3$-tangent conic of $Q_u$.
Then 
$\gamma_u^*(\barc)$ is a union of two one-nodal rational  curves.
\qed
\end{proposition}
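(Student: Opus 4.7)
The plan is to carry out the local analysis at the order-$4$ osculation point of $\barc$ with $Q_u$ (the new feature, compared to Proposition~\ref{prop:lifts}(2)) and then combine it with a global irreducibility argument for the pullback to $X_u$ of each component of $\pi_u^*(\barc)$.

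First I would reestablish for the $3$-tangent case that $\pi_u^*(\barc) = c + c'$ with $c$ and $c'$ smooth rational curves on $Y_u$. The splitting is built into the definition of a splitting conic; smoothness follows from a local calculation at each of the three tangency points. Choosing coordinates on $\P^2$ with $\barc$ as a coordinate axis and $\varphi|_{\barc}$ vanishing of order $2k$ at a tangency point (with $k=2$ at the osculation point $p$ and $k=1$ at each ordinary tangency point), the local equation $w^2 = x^{2k}$ of $\pi_u^{-1}(\barc)$ factors as $(w - x^k)(w + x^k) = 0$, displaying two smooth branches. Away from $Q_u$ the map $\pi_u$ is \'etale on $\barc$, so $c$ and $c'$ are globally smooth and birational to $\barc \cong \P^1$, hence themselves isomorphic to $\P^1$. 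At $p$ the two branches meet with intersection multiplicity $k=2$ on $Y_u$ (tangential contact), while at the two ordinary tangency points the intersection is transverse. Summing yields $\intfY{c, c'} = 4$, and then $\intfY{c, c} = \intfY{c', c'} = 0$ from $(c+c')^2 = 2\barc^2 = 8$.

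Next I would analyze $\eta_u^*(c)$. Let $R_u := \pi_u^{-1}(Q_u)_{\mathrm{red}} \subset Y_u$, so that $[R_u] = 2h_u$ in $\SY_u$; the cover $\eta_u$ is defined by the line bundle $\mathcal{O}_{Y_u}(h_u)$ together with a section of its square cutting out $R_u$. Restricting to $c \cong \P^1$, the induced double cover is defined by $\mathcal{O}_{\P^1}(2)$ with a section $\sigma$ of $\mathcal{O}_{\P^1}(4)$ whose zero divisor on $c$ equals $c \cap R_u = 2p + q_1 + q_2$, where $q_1, q_2 \in c$ are the preimages of the two ordinary tangency points. Since $q_1 \neq q_2$, the divisor $2p + q_1 + q_2$ is not of the form $2D$ on $\P^1$, so $\sigma$ is not a global square and the double cover is irreducible. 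Locally at $p$, in coordinates where $\eta_u$ is given by $s^2 = w$ and $c$ by $\{w = x^2\}$, the preimage $\eta_u^{-1}(c)$ is cut out by $s^2 = x^2$, exhibiting a node; at $q_1$ and $q_2$, where $c$ meets $R_u$ transversely, $\eta_u^{-1}(c)$ is locally smooth at the corresponding ramification points of $\eta_u$.

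To finish, I would compute the genus. Adjunction on the $K3$ surface $X_u$ gives $p_a(\eta_u^*(c)) = 1 + \intfX{\eta_u^*(c), \eta_u^*(c)}/2 = 1 + \intfY{c, c} = 1$, using the general identity $\intfX{\eta_u^* D, \eta_u^* D} = 2\, \intfY{D, D}$ for the degree-$2$ cover $\eta_u$. Riemann--Hurwitz applied to the smooth double cover of $c \cong \P^1$ branched precisely at $q_1, q_2$ (the points of odd vanishing order of $\sigma$) shows the normalization of $\eta_u^*(c)$ has genus $0$. Hence $p_a - p_g = 1$ is accounted for entirely by the single node over $p$ located above, and $\eta_u^*(c)$ is a one-nodal rational curve; the same argument applies to $c'$, and $\gamma_u^*(\barc) = \eta_u^*(c) + \eta_u^*(c')$ yields the claim. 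The main obstacle is the global irreducibility step: the local picture at $p$ already shows two branches meeting at a node, so one must separately rule out global splitting of $\eta_u^*(c)$, and this is where the condition $q_1 \neq q_2$ is essential.
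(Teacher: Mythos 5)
Your argument is correct, and in fact the paper offers no proof at all of this proposition --- it is stated as an ``easy result'' with an immediate \qed, in the same spirit as Proposition~\ref{prop:lifts} and Table~\ref{table:pullbacks}. Your local analysis of $w^2=x^{2k}$ at the tangency points, the irreducibility criterion for the double cover of $c\cong\P^1$ via the branch divisor $2p+q_1+q_2$ not being twice a divisor, and the genus bookkeeping ($p_a=1$, $p_g=0$, one node over the osculation point) supply exactly the verification the paper leaves to the reader.
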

Recall that the double covering $\eta_u\colon X_u\to Y_u$ 
induces a primitive embedding of lattices $\eta_u\sp*\colon \SY_u(2)\inj \SX_u$.
\begin{proposition}\label{prop:Kondo}
If  $u\in \UUU$ is general, then 
 $\eta_u^*$ is an isomorphism.
\end{proposition}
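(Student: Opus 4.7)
The plan is to prove Proposition~\ref{prop:Kondo} by a moduli-theoretic dimension count, exploiting the extra structure given by the order-$4$ automorphism $j_u \in \mathrm{Gal}(X_u/\P^2)$. Since $\eta_u^*\colon \SY_u(2) \inj \SX_u$ is a primitive embedding of rank-$8$ lattices (primitivity being a standard consequence of the fact that the branch divisor $\pi_u\inv(Q_u) \subset Y_u$ of the double cover $\eta_u$ is $2$-divisible in $\SY_u$, its class being $2h_u$), it suffices to show that $\mathrm{rank}_{\Z}\,\SX_u = 8$ for general $u \in \UUU$.

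First, I would analyze the action of $j_u^*$ on the cohomology of $X_u$. Both quotients $X_u/\langle j_u\rangle = \P^2$ and $X_u/\langle j_u^2\rangle = Y_u$ are rational, hence carry no nonzero holomorphic $2$-form, so $j_u^*$ acts on the line $H^{2,0}(X_u)$ by a primitive $4$th root of unity $\zeta \in \{\pm i\}$; in particular, $j_u$ is non-symplectic. Next, for general $u$, the action of $j_u^*$ on $T_u \otimes \Q$, where $T_u := \SX_u^{\perp} \subset H^2(X_u, \Z)$ is the transcendental lattice, has no $\pm 1$-eigenvalues: any rational $\pm 1$-eigenvector would be $j_u^{*2}$-invariant, hence of Hodge type $(1,1)$ (since $j_u^{*2}$ acts by $-1$ on $H^{2,0}$), hence algebraic by the Lefschetz $(1,1)$-theorem, contradicting the definition of $T_u$. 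Consequently $T_u$ is a $\Z[\zeta]$-module, and $\dim_{\C}(T_u\otimes\C)_{\zeta} = \mathrm{rank}_{\Z} T_u/2$.

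The key step is the dimension comparison. The assignment $u \mapsto (X_u, j_u)$ factors through the moduli $\UUU/\PGL_3$, which has dimension $14 - 8 = 6$, and is generically injective, because $(\P^2, Q_u)$ is recovered from $(X_u, j_u)$ as the quotient $X_u/\langle j_u\rangle$ together with the branch divisor of the quotient map. On the moduli side, since $j_u^*$ preserves the cup product and $\zeta^2 = -1 \ne 1$, the form vanishes identically on $(T_u\otimes\C)_\zeta$, and the periods of $X_u$ vary in a complex ball of dimension $\dim_\C (T_u\otimes\C)_\zeta - 1 = \mathrm{rank}_{\Z} T_u/2 - 1$, cut out by the positivity condition $\omega\bar\omega > 0$ inside $\P((T_u\otimes\C)_\zeta)$. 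By the local Torelli theorem applied to our family of pairs $(X_u, j_u)$, the $6$-dimensional image embeds into this ball, forcing $\mathrm{rank}_{\Z} T_u/2 - 1 \ge 6$, so that $\mathrm{rank}_{\Z} \SX_u \le 8$. Combined with the reverse inequality from the embedding $\eta_u^*$ and its primitivity, we conclude $\eta_u^* \SY_u(2) = \SX_u$.

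The main obstacle will be the rigorous identification of the period target as a complex ball of the stated dimension and the verification that the relevant period map is a local isomorphism on the parameter space; both constitute the substance of the Kondo-type ball-quotient uniformization of moduli of K3 surfaces with non-symplectic automorphisms of order~$4$, which the labelling of the proposition alludes to.
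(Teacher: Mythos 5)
Your strategy coincides with the paper's: both rest on Kondo's eigenperiod construction for the order-$4$ non-symplectic automorphism $j_u$ --- the splitting of $H^2(X_u,\Z)$ into the rank-$8$ invariant lattice $\LL_S=\eta_u^*(\SY_u(2))$ of $j_u^{*2}$ and its rank-$14$ orthogonal complement $\LL_T$, the observation that $j_u^*$ acts with eigenvalues $\pm\sqrt{-1}$ on the transcendental part, and the comparison of the $6$-dimensional moduli of quartics with a $6$-dimensional eigenperiod ball. The paper outsources the dominance of the eigenperiod map to Kondo's paper, whereas you re-derive it from local Torelli together with the generic injectivity of $u\mapsto(X_u,j_u)$ modulo $\PGL_3(\C)$; that difference is cosmetic.

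The one step I would not accept as written is the inequality $\mathrm{rank}_{\Z}\,T_u/2-1\ge 6$. You build the ball inside $\P_*\bigl((T_u\tensor\C)_{\sqrt{-1}}\bigr)$ out of the transcendental lattice of the \emph{single} fibre $X_u$, and then assert that the $6$-dimensional image of the period map embeds into it. But if $T_u$ were small, the periods of nearby fibres would simply fail to lie in $T_u\tensor\C$, so the embedding statement presupposes exactly what you are trying to prove: as stated, the bound is circular. There are two standard repairs. (a) Take $u$ \emph{very general}, so that $\SX_u$ is minimal and hence $T_{u'}\subseteq T_u$ for all nearby $u'$ (the N\'eron--Severi lattice only jumps up on countably many proper closed subsets); then the periods of the family genuinely stay in $\P_*\bigl((T_u\tensor\C)_{\sqrt{-1}}\bigr)$ and your inequality follows. (b) As in the paper, take the receptacle to be the ball in $\P_*(V_{\sqrt{-1}})$ with $V_{\sqrt{-1}}\subset\LL_T\tensor\C$ built from the \emph{locally constant} lattice $\LL_T$, whose dimension is $14/2-1=6$ on the nose; local Torelli (or Kondo's theorem) then makes the period of a general $u$ a general point of this ball, and since $\LL_T\tensor\C=V_{\sqrt{-1}}\oplus\overline{V_{\sqrt{-1}}}$, the minimal $\Z$-submodule $M\subset\LL$ with $H^{2,0}(X_u)\subset M\tensor\C$ is all of $\LL_T$, whence $\SX_u=M\sperp=\LL_S$. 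With either repair your argument becomes the paper's.
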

\begin{proof}
Kondo~\cite{Kondo2000} studied 
the moduli of genus-$3$ curves
by considering the periods of $K3$ surfaces $X$
that are cyclic covers of $\P^2$ of degree $4$ branched along quartic curves $Q\subset \P^2$.
Let $j$ denote the generator  of $\Gal(X/\P^2)\cong \mu_4$ that
acts on $H^{2,0}(X)$ as $\sqrt{-1}$.
Kondo exhibits an action of the cyclic group
$\mu_4$ on the $K3$ lattice
\[
\LL:=E_8^{\oplus 2}\oplus U^{\oplus 3}
\]
that is obtained by a marking $H^2(X, \Z)\cong \LL$.
Let $\LL_{S}$ and $\LL_{T}$ be the kernel of $j^{*2}-1$ and of $j^{*2}+1$  on $\LL$, respectively.
Then $\LL_{S}$ is of rank $8$,  and,
via the marking,  equal  to the image of
the pull-back of $H^2(Y, \Z)(2)$ by the double covering $X\to Y:=X/\gens{j^2}$.
The period $H^{2,0}(X)$ is a point of $\P_*(V_{\sqrt{-1}})$,  
where $V_{\sqrt{-1}}$
is the kernel of $j^{*}-\sqrt{-1}$ on $\LL_{T}\tensor \C$.
We have $\dim \P_*(V_{\sqrt{-1}})=6$.
The result of~\cite{Kondo2000} implies that,  when $Q$ varies,
the point $H^{2,0}(X)$ of $\P_*(V_{\sqrt{-1}})$ sweeps an open subset of 
$\P_*(V_{\sqrt{-1}})$.
\par
We fix a marking $H^2(X_u, \Z)\cong \LL$.
Since $u\in \UUU$ is general,
the period $H^{2,0} (X_u)$ 
is general in $\P_*(V_{\sqrt{-1}})$.
Since $\LL_{T}\tensor\C=V_{\sqrt{-1}}\oplus \overline{V_{\sqrt{-1}}}$, 
the minimal $\Z$-submodule $M$ of $\LL$ such that  $M\tensor\C$ contains $H^{2,0} (X_u)$
is equal to $\LL_{T}$, and hence its orthogonal complement $M\sperp=\SX_{u}$
is equal to $\LL_{S}=\eta_u^* (\SY_u(2))$.
\end{proof}
Let $\Rats (X_u)$ denote the set of rational curves on $X_u$, 
and  $\Ells (X_u)$  the set of elliptic fibrations on $X_u$.
\begin{proposition}\label{prop:Rats}
Suppose that $u\in \UUU$ is general.
Then $\Rats(X_u)$ is equal to the set $\tilL_u:=\set{\eta_u^*(l)}{l\in L_u}$
of $56$ smooth rational curves on $X_u$.
\end{proposition}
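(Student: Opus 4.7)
The plan has two inclusions. The reverse inclusion $\tilL_u \subset \Rats(X_u)$ follows from Proposition~\ref{prop:lifts}(1): for each $l \in L_u$ lifting an ordinary bitangent $\barl$, the divisor $\eta_u^*(l)$ is one of the two smooth rational $(-2)$-curves $\till, \till'$ comprising $\gamma_u^*(\barl)$. These $56$ curves are pairwise distinct because $\eta_u^*$ is injective on classes (by Proposition~\ref{prop:Kondo}, it is even an isomorphism).

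For the forward inclusion, let $C \subset X_u$ be a smooth rational (hence $(-2)$-) curve. The key input is Proposition~\ref{prop:Kondo}: since $\eta_u^* \colon \SY_u(2) \isom \SX_u$ is an isomorphism, the deck involution $j := j_u^2$ of $\eta_u$ acts trivially on $\SX_u$, so $[j(C)] = j^*[C] = [C]$. Because $(-2)$-classes on a $K3$ surface are rigid (so $h^0(\OOO_{X_u}(C))=1$ and $|C| = \{C\}$), this upgrades to $j(C) = C$ as divisors. The fixed locus $\mathrm{Fix}(j)$ is the ramification curve of $\eta_u$, which is isomorphic to $Q_u$ and hence a smooth irreducible curve of genus $3$; so $C$ is not contained in $\mathrm{Fix}(j)$, and $j$ restricts to a non-trivial involution on $C \cong \P^1$.

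I would then set $D := \eta_u(C) \subset Y_u$, an irreducible curve not contained in the branch locus $\pi_u^{-1}(Q_u)$, with $\eta_u|_C \colon C \to D$ of degree $2$. Consequently $\eta_u^{-1}(D) = C$ as reduced schemes and $\eta_u^*[D] = [C]$ in $\SX_u$, so $[D]^2 = -1$ in $\SY_u$. Adjunction on the del Pezzo $Y_u$ (with $K_{Y_u}=-h_u$) gives
\[
p_a(D) \;=\; \frac{D^2 + D \cdot K_{Y_u}}{2}+1 \;=\; \frac{1 - h_u \cdot D}{2}.
\]
Since $h_u$ is ample and $D$ is an integral effective curve, $h_u \cdot D \geq 1$ and $p_a(D) \geq 0$; both constraints force $h_u \cdot D = 1$ and $p_a(D) = 0$. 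Hence $D$ is a smooth rational $(-1)$-curve on $Y_u$, i.e.\ an element of $L_u$, and $C = \eta_u^*(D) \in \tilL_u$.

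The main obstacle is the step promoting the class identity $j^*[C] = [C]$ to the scheme-theoretic equality $j(C) = C$, and then ruling out $C \subset \mathrm{Fix}(j)$ by invoking the genus-$3$ nature of the ramification divisor. Once these two rigidity observations are in place, the identification of $D$ as an element of $L_u$ is a direct combination of Riemann--Hurwitz for $\eta_u|_C$ and the adjunction formula on the del Pezzo $Y_u$.
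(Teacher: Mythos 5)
Your argument is correct, and it takes a genuinely different route from the paper. The paper proves $\Rats(X_u)=\tilL_u$ by an explicit lattice computation: it considers the cone in $\SX_u\tensor\R$ cut out by the $56$ classes of $\tilL_u$, enumerates all of its faces and rays by linear programming, and shows each ray generator is nef by listing all $(-2)$-vectors $r$ with $1\le\intfX{\tilh_u,r}\le 5$ and checking they all lie in $\tilL_u$; since every smooth rational curve supports a wall of the nef cone, equality of the two cones gives the claim. Your proof instead exploits the covering geometry: Proposition~\ref{prop:Kondo} makes $j_u^{*2}$ trivial on all of $\SX_u$, rigidity of irreducible negative curves upgrades $[j_u^2(C)]=[C]$ to $j_u^2(C)=C$, the genus-$3$ fixed curve rules out $C\subset\mathrm{Fix}(j_u^2)$, and then Riemann--Hurwitz plus adjunction on the del Pezzo surface $Y_u$ forces $D=\eta_u(C)$ to be a $(-1)$-curve, hence an element of $L_u$ via the identification~\eqref{eq:Lu}. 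All the intermediate steps check out (in particular $\eta_u^*D=C$ with multiplicity one because $D$ is not in the branch locus, so $D^2=-1$, and the parity/positivity constraints pin down $h_u\cdot D=1$). Your approach is more conceptual and avoids the face enumeration entirely; what it does not buy is the detailed face structure of the nef cone (Table~\ref{table:faces}), which the paper explicitly says it needs again in the proof of Proposition~\ref{prop:Ells} to classify the elliptic fibrations and their singular fibers --- that is the stated reason the paper gives the computational proof rather than citing~\cite{XRAtlas2021}.
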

This proposition is proved by Proposition~\ref{prop:Kondo} 
and~\cite[Proposition 98]{XRAtlas2021}.
See also~\cite[Remark 99]{XRAtlas2021}.
We give a proof, however, 
because the argument  is also used in the proof of 
Proposition~\ref{prop:Ells} below.
Recall from the proof of Theorem~\ref{thm:barCCC} that,
for  $v\in F_u$,
there exists an elliptic fibration $\phi_v\colon X_u\to \P^1$
such that 
the class of a  fiber of $\phi_v$ is   $\eta_u^*(v)$.
\begin{proposition}\label{prop:Ells}
Suppose that $u\in \UUU$ is general.
Then $v\mapsto \phi_v$ gives a  bijection $F_u \cong \Ells (X_u)$.
Each  fibration $\phi_v$  has no section.
The singular fibers of $\phi_v$ consist  of  
$6$  fibers of type $\typeI_2$ and $12$ fibers of type $\typeI_1$.
\end{proposition}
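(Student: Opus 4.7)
The plan has three parts, matching the three claims of the proposition.

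To establish the bijection $F_u \cong \Ells(X_u)$, the map $v \mapsto \phi_v$ is already well-defined from the construction in the proof of Theorem~\ref{thm:barCCC}, and it is injective because $\phi_v$ is determined by its primitive fiber class $\tilh_u$-analog $\eta_u^*(v) \in \SX_u$ while $\eta_u^*$ is injective. For surjectivity, I will start with any elliptic fibration $\phi\colon X_u \to \P^1$ with primitive isotropic nef fiber class $e \in \SX_u$; by Proposition~\ref{prop:Kondo} one has $e = \eta_u^*(v)$ for a unique primitive $v \in \SY_u$ with $v^2 = 0$, and by Proposition~\ref{prop:Rats} every $(-2)$-curve on $X_u$ is $\eta_u^*(l)$ with $l \in L_u$, so nefness of $e$ forces $v \cdot l \geq 0$ for all $l \in L_u$, i.e., $v$ is nef on the del Pezzo surface $Y_u$. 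To pin down $v \cdot h_u = 2$, I will combine Riemann--Roch on $Y_u$ (which forces $v \cdot h_u$ to be even and at least $2$, using ampleness of $h_u$) with adjunction applied to a Bertini-generic smooth member $D \in |v|$, giving $p_a(D) = 1 - (v \cdot h_u)/2 \geq 0$ and hence $v \cdot h_u \leq 2$.

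The absence of a section is a short parity argument: a section would be a $(-2)$-curve $R$ on $X_u$ with $R \cdot \eta_u^*(v) = 1$, but Proposition~\ref{prop:Rats} gives $R = \eta_u^*(l)$ for some $l \in L_u$, so $R \cdot \eta_u^*(v) = 2\, v \cdot l$ is even, a contradiction.

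For the singular fibers, I will analyze the pencil $|v|$ on $Y_u$ separately from the double cover $\eta_u$. By Bertini, a general $c \in |v|$ is smooth and irreducible. Singular members of $|v|$ are therefore reducible and, by expanding $(l + l')^2 = 0$, have the form $c = l + l'$ with $l, l' \in L_u$ and $l \cdot l' = 1$; the proof of Theorem~\ref{thm:barCCC} identifies exactly $6$ such decompositions. For general $u$, the unique intersection point of $l$ and $l'$ on $Y_u$ lies above $\bar{l} \cap \bar{l}' \in \P^2 \setminus Q_u$, hence off the branch curve $B := \pi_u^{-1}(Q_u)$ of $\eta_u$, so the corresponding fiber of $\phi_v$ consists of two $(-2)$-curves meeting at $2$ distinct points, i.e., a fiber of Kodaira type $I_2$. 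The remaining singular fibers come from smooth members $c \in |v|$ whose double cover $\eta_u^{-1}(c)$ is singular, which happens precisely when $c$ is tangent to $B$. The restriction of the pencil $|v|$ to $B \cong Q_u$ gives a degree-$4$ map $B \to \P^1$ (since $B \cdot v = 4$), and Riemann--Hurwitz on the genus-$3$ curve $B$ gives a ramification divisor of degree $2 \cdot 3 - 2 - 4(-2) = 12$; for general $u$ all ramifications are simple, each producing a nodal rational fiber of $\phi_v$, i.e., a fiber of type $I_1$. This yields $6$ fibers of type $I_2$ and $12$ of type $I_1$.

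The main obstacle will be the last step: ensuring that for general $u$ all $12$ ramifications of $B \to \P^1$ are simple, so that no cuspidal fibers of type $II$ (or other non-generic fibers) appear. I would establish this by an openness argument, noting that higher-order tangency between a $|v|$-fiber and $B$ is a proper closed condition on the moduli $\UUU$ of smooth quartics and so is avoided for generic $u$; the genericity of $u$ is likewise what justifies, at each earlier stage, the use of Propositions~\ref{prop:Kondo} and~\ref{prop:Rats} and the assumption that $\bar{l} \cap \bar{l}'$ lies off $Q_u$.
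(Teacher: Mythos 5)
Your conclusions are all correct, but your route to them differs from the paper's in an interesting way, and one step needs shoring up. For the bijection $F_u \cong \Ells(X_u)$, the paper computes the nef cone of $X_u$ outright: using Proposition~\ref{prop:Kondo} to identify $\SX_u$ with $\eta_u^*(\SY_u(2))$, it enumerates by linear programming all faces of the cone cut out by the $56$ classes in $\tilL_u$, finds that the isotropic rays are exactly the $126$ classes $\eta_u^*(v)$ with $v\in F_u$ (proving Proposition~\ref{prop:Rats} along the way), and invokes the standard correspondence between elliptic fibrations and isotropic nef rays. You instead descend the fiber class of an arbitrary fibration to a primitive nef isotropic $v$ on the del Pezzo surface $Y_u$ and pin down $\intfY{v, h_u}=2$ by Riemann--Roch and adjunction; this is more geometric and avoids the machine computation, but the phrase ``adjunction applied to a Bertini-generic smooth member $D\in|v|$'' presupposes that $|v|$ has an integral member, which is not automatic before you know $\intfY{v, h_u}=2$ (Bertini gives smoothness off the base locus, not irreducibility). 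The gap is fillable --- for instance, push down a general fiber $f$ of $\phi$: since $[f]\in\Image\,\eta_u^*$ is $j_u^{*2}$-invariant, $\eta_u(f)$ is an irreducible curve of class $v$ or $2v$, and $p_a\ge 0$ rules out both the class $2v$ and $\intfY{v, h_u}\ge 4$ --- but as written the step is close to circular. Your no-section parity argument is the paper's. For the singular fibers, your analysis of the six reducible members $l+l\sprime$ (nodes off the branch curve, hence type $\typeI_2$ rather than $\typeIII$) matches the paper; for the count of $\typeI_1$ fibers the paper uses the Euler number $24=6\cdot 2+12$ where you use Riemann--Hurwitz for the degree-$4$ map $B\to\P^1$, which is an equally valid bookkeeping. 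Finally, the issue you flag as the main obstacle --- excluding higher-order tangency, hence cuspidal fibers --- is precisely what Lemma~\ref{lem:2222or422} and Proposition~\ref{prop:lifts2} already provide (the only tangency patterns of splitting conics of a general quartic are $(2,2,2,2)$ and $(4,2,2)$), so you can cite them rather than rerun the dimension count; note that the same lemma is also needed to exclude a single smooth member carrying two simple tangencies (pattern $(4,4)$), without which the identification of the $12$ ramification points with $12$ distinct $\typeI_1$ fibers would not follow.
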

\begin{proof}[Proof of Propositions~\ref{prop:Rats}~and~\ref{prop:Ells}]
The space 
\[
\set{v \in \SX_u\tensor\R}{\intfX{v,v}>0}
\]
has two connected components.
Let $\PPP_u$ be the connected component 
containing the ample class $\tilh_u$.
For a vector  $v\in \SX_u\tensor\R$ with $\intfX{v, v}<0$,
let $[v]\sperp$ be the hyperplane of $\SX_u\tensor\R$ defined by $\intfX{x, v}=0$,
and we put $(v)\sperp:=[v]\sperp\cap \PPP_u$.
We then put
\[
N_u:=\set{v\in \PPP_u}{\intfX{v, \Gamma}\ge 0\;\;\textrm{for all curves}\;\; \Gamma\subset X_u}.
\]
It is well known that $N_u$ is equal to
\[
\set{v\in \PPP_u}{\intfX{v, \Gamma}\ge 0\;\;\textrm{for all}\;\; \Gamma\in \Rats(X_u)},
\]
and that each  $\Gamma\in \Rats(X_u)$ defines a wall of the cone $N_u$,
that is, $(\Gamma)\sperp\cap N_u$ contains a non-empty open subset of $(\Gamma)\sperp$.
Let  $\barN_u$ be the closure of $N_u$ in $ \SX_u\tensor\R$.
For the proof of  Proposition~\ref{prop:Rats}, it is enough to show that $\barN_u$ is equal to 
\[
\barNsprime_u:=\set{v\in \SX_u\tensor\R}{\intfX{v, \till}\ge 0\;\;\textrm{for all}\;\; \till\in \tilL_u}.
\]
\par
A \emph{face} of the cone $\barNsprime_u$ is a closed subset 
$F$ of $\barNsprime_u$ of the form
$F=V\cap \barNsprime_u$,
where $V$ is an intersection of some of the hyperplanes
$[\till]\sperp$ ($\till\in \tilL_u$)
such that $F$ contains a non-empty open subset of 
$V$.
We say that $V$ is  the \emph{supporting linear subspace} of the face $F$,
and put $\dim F:=\dim V$.
A \emph{ray} is 
a $1$-dimensional face.
For the proof of  $\barN_u=\barNsprime_u$,
it is enough to show that all rays
of $\barNsprime_u$ are contained in $\barN_u$.
We can calculate all the faces $F$ of $\barNsprime_u$ by descending induction on $d:=\dim F$ 
using linear programming method~(see~\cite[Section 2.2]{ShimadaHessian2019}).
The result is as follows.
Suppose that $d\ge 2$.
Then a linear subspace 
\begin{equation}\label{eq:Vcaps}
V=[\till_1]\sperp\cap\dots\cap [\till_k]\sperp
\end{equation}
with $\till_1, \dots, \till_k\in \tilL_u$ is the supporting 
linear subspace of a face $F$ with $\dim F=d$ if and only if 
$k=8-d$ and $\till_1, \dots, \till_k$ are disjoint from each other,
that is,  their dual graph is the Dynkin diagram of type $(8-d) A_1$.
Suppose that $d=1$.
Then a linear subspace $V$ as~\eqref{eq:Vcaps} is 
the supporting linear subspace of a ray $F$ 
if and only if  one of the following  holds:
\begin{itemize}
\item[($7A_1$)] $k=7$ and the dual graph of $\till_1, \dots, \till_7$ 
is the Dynkin diagram of type $7A_1$.
In this case, $F$  is generated by a  vector $v\in \SX_u$
with $\intfX{\tilh_u, v}=6$ and  $\intfX{v, v}=2$.
There exist  exactly $576$ rays of this type.
\item[($6 \widetilde{A}_1$)]  $k=12$ and the dual graph of $\till_1, \dots, \till_{12}$ is
the Dynkin diagram of  type $6 \widetilde{A}_1$, 
where $\widetilde{A}_1$ is  $\wtAtwo$.
In this case, $F$  is generated by a  primitive vector 
$\tilv$ with 
$\intfX{\tilh_u, \tilv}=4$ and $\intfX{\tilv, \tilv}=0$.
There exist  exactly $126$ rays of this type,
and these generators $\tilv$ are equal to $\eta_u^*(v)$ for some $v\in F_u$.
\end{itemize}
In Table~\ref{table:faces}, the numbers of faces of  $\barNsprime_u$ are given.
\par
Suppose that there exists a ray $F$ of $\barNsprime_u$
not contained in $\barN_u$.
Then the generating class $v\in \SX_u$ of $F$ given above  is effective but  not nef.
Let $D$ be an effective divisor of $X_u$  such that $[D]=v$.
Then $D$ contains a smooth rational curve $\Gamma$ 
with $\intfX{\Gamma, v}<0$ as an irreducible component.
Since $\tilh_u$ is ample,
the $(-2)$-vector $r=[\Gamma]$ satisfies 
$\intfX{\tilh_u, r}<\intfX{\tilh_u, v}\le 6$.
We make the set of all $(-2)$-vectors $r\sprime\in \SX_u$
with $\intfX{\tilh_u, r\sprime}=1, \dots, 5$,
and confirm that this set is equal to the set of classes of  $\tilL_u$.
In particular, 
it contains no element $r\sprime$ satisfying  $\intfX{r\sprime, v}<0$.
This contradiction shows 
$\barNsprime_u=\barN_u$, and 
$\Rats(X_u)=\tilL_u$  is proved.
\par
It is well known that 
there exists a bijection between $\Ells (X_u)$ and the set of rays contained 
in $\barN_u\cap \bdr \barPPP_u$.
Hence we have $|\Ells (X_u)|=126$,   and $v\mapsto \phi_v$ gives a bijection 
from $F_u$ to $\Ells (X_u)$.
Therefore, 
as was shown in the proof of Theorem~\ref{thm:barCCC},
every  fiber $f$ of any elliptic fibration $\phi_v$
is a double cover of  a splitting conic of $Q_u$.
The class of $f$ is equal to  $\eta_u^*(v)$.
Since no  element $\till\in \Rats(X_u)$ satisfies $\intfX{f, \till}=1$,
the fibration $\phi_v$ has no section.
Since the dual graph of the set of $\till\in \Rats(X_u)$ with $\intfX{f, \till}=0$  
is of type $6 \widetilde{A}_1$,  
the fibration $\phi_v$ has exactly $6$ reducible fibers,
each of which is either of type $\typeI_2$ or of type $\typeIII$.
If  $\till_{i}, \till_{j}\in \Rats(X_u)$ are in the same  fiber of $\phi_v$,
then they 
satisfy $\intfX{\till_{i}, \till_{j}} =2$ and hence 
$\barl_i:=\gamma_u(\till_{i})$ and $\barl_j:=\gamma_u(\till_{j})$ are
distinct bitangents of $Q_u$ by Table~\ref{table:pullbacks}.
Since $u\in \UUU$ is general, the intersection point 
of $\barl_i$ and $\barl_j$ is not on $Q_u$. 
Hence  every reducible fiber of $\phi_v$ is of type $\typeI_2$.
The  irreducible singular fibers are either 
of type $\typeI_1$ or of type $\typeII$.
By Lemma~\ref{lem:2222or422} and Proposition~\ref{prop:lifts2}, 
we see that all irreducible singular fibers must be  of type $\typeI_1$.
Calculating  the Euler number,
we conclude that 
the number of singular fibers of type $\typeI_1$ is $12$. 
\end{proof}
\begin{table}
\[
\begin{array}{c | lllllll}
\dim F\;  &\;\;7 & 6 & 5 & 4 & 3 & 2 & 1 \\
\hline 
\# &\;\; 56 & 756 & 4032 & 10080 & 12096 & 6048 & 576+126
\end{array}
\]
%
\caption{Numbers of faces $F$}\label{table:faces}
\end{table}
\begin{remark}\label{rem:576}
The set of $576$ rays  of type $7A_1$ is in bijective correspondence with
the set  $\disjsevenset_u$ in the proof of Theorem~\ref{thm:monodromy}.
A ray $F$ of type $7A_1$ corresponds to a $7$-tuple 
$\{l_1, \dots, l_7\}\in \disjsevenset_u$
as follows.
The generator $v$ of $F$ with $\intfX{v, v}=2$ is the class of
the pull-back of a line of a plane $\PP$ by the double covering
$X_u\to Y_u\to \PP$,
where $Y_u\to \PP$ is the blowing down of  the $(-1)$-curves $l_1, \dots, l_7$.
\end{remark}
\begin{proof}[Proof of Theorem~\ref{thm:63}]
In fact, the proof was already given in the last paragraph of the proof of Proposition~\ref{prop:Ells}.
\end{proof}
\section{Configurations of $Y$-lifts}\label{sec:config}
Throughout this section, let $u$ be a general point of $\UUU$.
\subsection{Lemmas on quartic polynomials}
Let $[d_1, \dots, d_m]$ be a list of positive integers satisfying $d_1+\dots+d_m=4$.
We put
\[
\Gamma(d_1, \dots, d_m\colon 2):= \Gamma(d_1)\times  \dots\times \Gamma( d_m)\times \Gamma( 2),
\]
and denote by $\psi_{[d_1, \dots, d_m]}\colon  \Gamma(d_1, \dots, d_m\colon 2) \to \Gamma(4)$ the morphism
\[
( f_1, \cdots,  f_m, q)\mapsto f_1\cdots f_m+q^2.
\]
\begin{lemma}\label{lem:dominant}
The morphism $\psi_{[d_1, \dots, d_m]}$ is dominant.
\end{lemma}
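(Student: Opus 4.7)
My plan is to prove dominance of $\psi := \psi_{[d_1,\dots,d_m]}$ by exhibiting a point $(f_1,\dots,f_m,q)$ at which the differential is surjective onto $\Gamma(4)$. A direct computation gives
\[
(d\psi)(\delta f_1,\dots,\delta f_m,\delta q) \;=\; \sum_{i=1}^m F_i\,\delta f_i \,+\, 2q\,\delta q, \qquad F_i:=\prod_{j\ne i} f_j \in \Gamma(4-d_i),
\]
so the image of $d\psi$ at $(f_1,\dots,f_m,q)$ equals
\[
\sum_{i=1}^m F_i\cdot \Gamma(d_i) \;+\; q\cdot \Gamma(2) \;\subseteq\; \Gamma(4),
\]
which is precisely the degree-$4$ graded piece of the homogeneous ideal $J:=(F_1,\dots,F_m,q)\subset\C[x,y,z]$.

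For the partition $[4]$, we have $F_1=1$, and the image is trivially all of $\Gamma(4)$. For $m\ge 2$, I would choose the $f_j$'s so that the plane curves $\{f_j=0\}$ are smooth and meet pairwise transversely, and choose $q\in \Gamma(2)$ so that $\{q=0\}$ is a smooth conic disjoint from all pairwise intersections $\{f_j=0\}\cap \{f_k=0\}$. The common zeros in $\P^2$ of $F_1,\dots,F_m$ lie on at least two of the $\{f_j=0\}$ (since $F_i$ is divisible by every $f_j$ with $j\ne i$), hence form a finite set; for the generic $q$ chosen above, $V(J)=\emptyset$ in $\P^2$. By the Nullstellensatz, $J$ is $(x,y,z)$-primary, so $J_N=\Gamma(N)$ for $N$ at least the Castelnuovo--Mumford regularity of $J$.

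For the partitions $[3,1]$ and $[2,2]$, three of the generators (namely $F_1, F_2, q$) already form a regular sequence in $\C[x,y,z]$ of total degree $\le 6$, and the Koszul complex gives directly
\[
\dim J_4 = \sum_{i}\dim\Gamma(4-e_i) - \sum_{i<j}\dim\Gamma(4-e_i-e_j) + \dim\Gamma(4-e_1-e_2-e_3) = 15
\]
(where $e_1,e_2,e_3$ are the three generator degrees), so $J_4=\Gamma(4)$. The main obstacle is the remaining cases $[2,1,1]$ and $[1,1,1,1]$, where $J$ has four generators and no regular subsequence, so the Koszul bound alone is insufficient. For these I would argue by explicit coordinates: taking $f_j\in\{x,y,z,x+y+z\}$ as appropriate, compute $\sum_i F_i\cdot \Gamma(d_i)$ as an explicit monomial subspace of $\Gamma(4)$, and verify that for generic $q$ the multiplication map $g\mapsto qg$ from $\Gamma(2)$ to the quotient $\Gamma(4)\big/\sum_i F_i\cdot\Gamma(d_i)$ is surjective. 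Since both spaces have dimension $\le 6$, surjectivity is equivalent to injectivity, which is a single non-degeneracy condition on $q$ that one exhibits by inspection (e.g., ensuring that no nonzero $g\in\Gamma(2)$ satisfies $qg\in\sum_i F_i\cdot\Gamma(d_i)$ at the chosen monomial point).
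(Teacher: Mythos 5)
Your differential computation is correct, and the identification of $\operatorname{im}(d\psi)$ with the degree-$4$ graded piece of the ideal $(F_1,\dots,F_m,q)$ is a sound way to organize the verification; the Koszul argument does settle $[4]$, $[3,1]$ and $[2,2]$ cleanly. The paper takes a shorter route that you missed: since any $f\in\Gamma(d_i)$ specializes to a product of $d_i$ linear forms, the image of $\psi_{[1,1,1,1]}$ is contained in the image of every $\psi_{[d_1,\dots,d_m]}$, so dominance of the single morphism $\psi_{[1,1,1,1]}$ implies all the others; the paper then disposes of that one case exactly as you propose for your residual cases, by checking that $d_P\psi$ has rank $15$ at one explicitly chosen (random) point. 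So your case analysis could be collapsed to the one case you find hardest, and for that case your plan and the paper's proof coincide. Two small cautions on your final paragraph: the claim that surjectivity of $g\mapsto qg$ onto $\Gamma(4)\big/\sum_i F_i\Gamma(d_i)$ is \emph{equivalent} to injectivity presupposes that the quotient has dimension exactly $6$, which itself requires checking (for $[1,1,1,1]$ the twelve generators $\ell F_i$ satisfy the three relations $f_iF_i=f_jF_j$, so one must verify there are no further relations); in practice it is simpler to just compute the rank of the full $18\times 15$ matrix, which is what the paper does.
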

%
%
\begin{proof}
It is enough to show that  $\psi_{[1,1,1,1]}$ is dominant, and  then, 
it suffices to find a point $P$ of  $\Gamma(1,1,1,1\colon 2)$
at which  the differential of $\psi:=\psi_{[1,1,1,1]}$ is of rank $\dim \Gamma(4)=15$.
By choosing points $P$ randomly and calculating 
the rank of $d_P \psi$, we can easily find such a point.
\end{proof}
\begin{definition}\label{def:Psi}
For $[d_1, \dots, d_m]$ with  $d_1+\dots+d_m=4$,
we have an open dense subset $\VVV_{[d_1, \dots, d_m]}\subset \Gamma(d_1, \dots, d_m\colon 2)$ and a dominant  morphism
\[
\Psi_{[d_1, \dots, d_m]}\colon  \VVV_{[d_1, \dots, d_m]} \to\UUU
\]
such that, for $p=(f_1,\dots, f_m, q)\in \VVV_{[d_1, \dots, d_m]} $,
the quartic curve corresponding $ \Psi_{[d_1, \dots, d_m]} (p)\in \UUU$
is defined by $f_1\cdots f_m+q^2=0$.
\end{definition}
\begin{lemma}\label{lem:q^2}
If $Q_u$ is defined by $f+q^2=0$ with $f\in \Gamma(4)$ and $q\in \Gamma(2)$,
then $Y_u$ has a divisor 
that is mapped isomorphically to the divisor $\{f=0\}$ of $\P^2$.
\end{lemma}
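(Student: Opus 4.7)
The plan is to exhibit the desired divisor explicitly by factoring $f$ on $Y_u$. Put $L := \OOO(2)$ and realize $Y_u$ as a hypersurface in the total space of $L$, defined by $w^2 = \pi_u^*(f + q^2)$, where $w$ is the tautological fiber coordinate; thus $w$ is a section of $\pi_u^* L$ on $Y_u$. On $Y_u$ one then has the key factorization
\[
\pi_u^* f \;=\; w^2 - (\pi_u^* q)^2 \;=\; (w - \pi_u^* q)(w + \pi_u^* q),
\]
so $\pi_u^* \{f=0\} = D^+ + D^-$ as divisors on $Y_u$, where $D^{\pm}$ is the zero divisor on $Y_u$ of the section $w \mp \pi_u^* q$ of $\pi_u^* L$.

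Next I would construct the inverse of $\pi_u|_{D^+}$ directly. The section $q \in H^0(\P^2, L)$ determines a morphism $\sigma_q$ from $\P^2$ into the total space of $L$ by $P \mapsto (P, q(P))$. At a point $P$ with $f(P) = 0$, the defining equation of $Y_u$ inside the total space of $L$ becomes $w^2 = q(P)^2$, which is satisfied at $w = q(P)$. Hence $\sigma_q$ restricts to a morphism $\sigma \colon \{f=0\} \to Y_u$ with image $D^+$, and by construction $\pi_u \circ \sigma = \mathrm{id}_{\{f=0\}}$.

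Since $\sigma$ is a section of the separated morphism $\pi_u$, it is automatically a closed embedding; this identifies $\{f=0\}$ with $D^+$ as a closed subscheme of $Y_u$, and $\pi_u|_{D^+}$ is then an isomorphism onto $\{f=0\}$ with inverse $\sigma$. The content of the lemma is really just the algebraic identity $w^2 - q^2 = (w-q)(w+q)$ interpreted on the double cover, so there is no serious obstacle; the only bookkeeping to be careful about is treating $q$ consistently as a section of $L$ rather than as a function on $\P^2$.
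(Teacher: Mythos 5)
Your proof is correct and follows essentially the same route as the paper: the paper likewise writes $Y_u$ as $w^2 = f + q^2$ and takes the divisor $\{f = w - q = 0\}$, which is exactly your $D^+$, and then asserts that $\pi_u$ restricts to an isomorphism onto $\{f=0\}$. Your explicit section $\sigma_q$ just spells out the ``obvious'' last step of the paper's argument.
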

\begin{proof}
The surface $Y_u$ is defined by $w^2=f+q^2$,
where $w$ is a new variable,  and hence 
contains  a divisor defined by 
$f=w-q=0$.
It is obvious that $\pi_u$ maps this divisor 
to the divisor $\{f=0\}$ of $\P^2$ isomorphically.
\end{proof}
\subsection{Triangles of bitangents}\label{subsec:triangles}
Recall that $L_u$ is the set of $Y$-lifts $l$ of bitangents $\barl\in \barL_u$ of $Q_u$.
\begin{definition}
A \emph{triangle on $Y_u$} is a subset $\{l_1, l_2, l_3\}$  of $L_u$
such that $\intfY{l_1, l_2}=\intfY{l_2, l_3}=\intfY{l_3, l_1}=1$.
A \emph{liftable triangle} of bitangents of $Q_u$ is a subset $\{\barl_1, \barl_2, \barl_3\}$  of $\barL_u$
that is the  image of a triangle on $Y_u$ by $\pi_u$.
\end{definition}
Let $\barl_1, \barl_2, \barl_3$ be bitangents of $Q_u$.
We choose $Y$-lifts $l_1, l_2, l_3 \in L_u$
of  $\barl_1, \barl_2, \barl_3$ in such a way that 
$\intfY{l_1, l_2}=\intfY{l_2, l_3}=1$.
Then 
$\{\barl_1, \barl_2, \barl_3\}$ is  liftable  if and only if $\intfY{l_3, l_1}=1$.
\par
Let $T_u$ be 
the set  of triangles on $Y_u$.
We have calculated $L_u\subset \SY_u$ explicitly.
Using this data, we enumerate 
$T_u$, and see that 
$|T_u|=2520$.
Let 
\[
\overline{T}_u:=T_u/\gens{\iota_u}
\]
be the set  of liftable triangles of bitangents of  $Q_u$.
\begin{corollary}\label{cor:1260}
There exist exactly $|\overline{T}_u|=1260$ liftable triangles.
\qed
\end{corollary}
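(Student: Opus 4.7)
Since $|T_u|$ has already been computed to be $2520$, the assertion amounts to showing that the quotient map $T_u \to T_u/\langle \iota_u\rangle = \overline{T}_u$ is exactly two-to-one, i.e.\ that $\iota_u$ acts freely on $T_u$ and that the orbit map identifies $T_u/\langle\iota_u\rangle$ with the set of liftable triangles of bitangents.

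First I would record the following arithmetic consequence of $\pi_u^*(\barl_i)=l_i+l_i\sprime$ together with $\intfY{l_i,l_i\sprime}=2$: for any two distinct bitangents $\barl_i,\barl_j$ with $Y$-lifts $\{l_i,l_i\sprime\},\{l_j,l_j\sprime\}$,
\[
\intfY{l_i,l_j}+\intfY{l_i,l_j\sprime}=1,
\]
obtained by expanding $\intfY{l_i+l_i\sprime,\,l_j+l_j\sprime}=\pi_u^*\bigl(\intfP{\barl_i,\barl_j}\bigr)=2$ and using $\iota_u$-symmetry. Consequently, once one chooses a $Y$-lift $l_i$ of $\barl_i$, for every other bitangent $\barl_j$ exactly one of its two $Y$-lifts meets $l_i$ transversely, and the other is disjoint from $l_i$.

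Next I would check that $\iota_u$ acts without fixed points on $T_u$. If a triangle $\{l_1,l_2,l_3\}$ were $\iota_u$-invariant, then the set equality $\{l_1,l_2,l_3\}=\{l_1\sprime,l_2\sprime,l_3\sprime\}$ would force some $l_i\sprime=l_j$ with $j\ne i$ (since $l_i\sprime\ne l_i$), and applying $\pi_u$ would give $\barl_i=\barl_j$; but the triangle condition $\intfY{l_i,l_j}=1$ excludes $l_j\in\{l_i,l_i\sprime\}$ (indeed $\intfY{l_i,l_i}=-1$ and $\intfY{l_i,l_i\sprime}=2$), so $\barl_i\ne\barl_j$, contradiction.

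Finally I would show the orbit map $T_u/\langle\iota_u\rangle\to\overline{T}_u$ is a bijection onto the liftable triangles. Surjectivity is the definition. For injectivity, suppose two triangles $\{l_1,l_2,l_3\}$ and $\{l_1'',l_2'',l_3''\}$ project to the same $\{\barl_1,\barl_2,\barl_3\}$; after relabeling, $l_i''\in\{l_i,l_i\sprime\}$. If $l_1''=l_1$, then by the displayed identity above, $\intfY{l_1,l_j''}=1$ for $j=2,3$ forces $l_j''=l_j$; if instead $l_1''=l_1\sprime$, the same reasoning forces $l_j''=l_j\sprime$ for $j=2,3$, i.e.\ $\{l_1'',l_2'',l_3''\}=\iota_u\{l_1,l_2,l_3\}$. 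Hence every liftable triangle has exactly two preimages in $T_u$, and $|\overline{T}_u|=|T_u|/2=1260$.

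The only potentially subtle point is the bookkeeping in the injectivity step, which is where it must be ruled out that a ``mixed'' choice like $l_1''=l_1,\,l_2''=l_2\sprime$ could again give a triangle; this is precisely ruled out by the $\intfY{l_i,l_j}+\intfY{l_i,l_j\sprime}=1$ identity, and everything else is straightforward.
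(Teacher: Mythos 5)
Your proposal is correct and follows the same route as the paper: the paper treats the corollary as immediate from the computed value $|T_u|=2520$ and the definition $\overline{T}_u=T_u/\langle\iota_u\rangle$, so that $|\overline{T}_u|=2520/2$. Your verification that $\iota_u$ acts freely on $T_u$ and that the quotient map is exactly two-to-one onto liftable triangles (via the identity $\intfY{l_i,l_j}+\intfY{l_i,l_j\sprime}=1$) correctly supplies the details the paper leaves implicit.
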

By Theorem~\ref{thm:monodromy},
we obtain the following:
\begin{proposition}\label{prop:2520}
By the monodromy, 
$\pione(\UUU, b)$ acts transitively on $T_b$ and hence on $\overline{T}_b$.
\qed
\end{proposition}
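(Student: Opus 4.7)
The strategy is to reduce the claim to a finite combinatorial question about $W(E_7)$ and then settle that question by explicit computation. By Theorem~\ref{thm:monodromy}, the monodromy homomorphism $\pi_1(\UUU, b) \to \OG(\SY_b, h_b) \cong W(E_7)$ is surjective. Any element of $W(E_7)$ preserves $h_b$ and the intersection form on $\SY_b$, so via the identification in~\eqref{eq:Lu} it permutes $L_b$ and preserves the numerical conditions $\intfY{l_i, l_j}=1$ defining triangles. Hence $W(E_7)$ acts on $T_b$, and the $\pi_1(\UUU, b)$-action on $T_b$ factors through this action; it therefore suffices to show that $W(E_7)$ acts transitively on $T_b$. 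Transitivity on $\overline{T}_b = T_b/\gens{\iota_b}$ then follows automatically, since $\iota_b$ acts as $-1$ on $\Sigma_b$ and thus commutes with the $W(E_7)$-action.

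To establish $W(E_7)$-transitivity on $T_b$, I would fix a base triangle $\tau_0 = \{l_1, l_2, l_3\} \in T_b$, obtained from the enumeration of $L_b$ provided by~\eqref{eq:Lu}, and then compute the $W(E_7)$-orbit of $\tau_0$. The explicit action of the seven standard generating reflections of $W(E_7)$ on $L_b$ is already available from the combinatorial framework used in Section~\ref{subsec:computationalexample} to compute the numbers $N\spmn$, and this action extends immediately to an action on $T_b$ by the assignment $\{l_1, l_2, l_3\} \mapsto \{l_1^w, l_2^w, l_3^w\}$. A short orbit computation in \texttt{GAP} then confirms that the $W(E_7)$-orbit of $\tau_0$ exhausts all $|T_b| = 2520$ triangles from Corollary~\ref{cor:1260}, proving transitivity.

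The main obstacle is essentially bookkeeping rather than conceptual: one just needs a concrete presentation of $W(E_7)$ acting on $L_b$ to carry out the orbit computation, which is already in place. A computation-free alternative would be to show that every triangle can be placed in a normal form relative to some blow-down $Y_b \to \P^2$ contracting a member of $\disjsevenlist_b$, and then invoke the transitivity of $W(E_7)$ on $\disjsevenlist_b$ established in the proof of Theorem~\ref{thm:monodromy}. However, this requires classifying all incidence patterns of triangles with disjoint septuples of $(-1)$-curves and checking that each type admits such a normalization, which is itself a finite combinatorial verification and offers no genuine simplification over the direct orbit computation.
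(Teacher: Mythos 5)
Your proposal is correct and follows essentially the same route as the paper: the paper likewise deduces the statement directly from Theorem~\ref{thm:monodromy} (surjectivity of the monodromy onto $W(E_7)\cong\OG(\SY_b,h_b)$), with the transitivity of $W(E_7)$ on the explicitly enumerated set $T_b$ of $2520$ triangles verified by the same kind of finite computation on $L_b\subset\SY_b$ that the paper uses throughout, and transitivity on $\overline{T}_b=T_b/\gens{\iota_b}$ descending via the equivariant quotient map.
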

\begin{proposition}\label{prop:f1f2f3f4}
Let $\barl_1, \barl_2, \barl_3$ be bitangents of $Q_u$.
Suppose that $\barl_i$ is defined by $f_i=0$ for $i=1, \dots, 3$, where $f_i\in \Gamma(1)$.
Then $\{\barl_1, \barl_2, \barl_3\}$ is  liftable  if and only if
there exist polynomials $f_4\in \Gamma(1)$ and $q\in \Gamma(2)$ 
such that $Q_u$ is defined by $f_1f_2 f_3 f_4+q^2=0$.
\end{proposition}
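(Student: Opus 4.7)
The plan is to prove both directions by relating $Y$-lifts of the bitangents $\barl_i$ to sign choices for ``square roots'' of $\varphi|_{\barl_i}$, where $\varphi\in \Gamma(4)$ is a defining equation of $Q_u$ and $Y_u$ is presented as $\{w^2=\varphi\}$ with $w$ a section of $\OOO(2)$.

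For the ``if'' direction, given $\varphi=f_1f_2f_3f_4+q^2$, I would argue as in Lemma~\ref{lem:q^2} that for each $i$ the divisor $l_i:=\{f_i=0,\ w=q\}\subset Y_u$ maps isomorphically onto $\barl_i$ and so lies in $L_u$. The scheme-theoretic intersection $l_i\cap l_j$ is $\{f_i=f_j=0,\ w=q\}$, which is the unique point of $\pi_u\inv(p_{ij})$ whose $w$-coordinate equals $q(p_{ij})$, where $p_{ij}:=\barl_i\cap\barl_j$. For general $u$ the points $p_{ij}$ lie off $Q_u$ and $q(p_{ij})\ne 0$, so this intersection is a single transverse point and $\intfY{l_i,l_j}=1$. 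Hence $\{l_1,l_2,l_3\}$ is a triangle on $Y_u$, exhibiting $\{\barl_1,\barl_2,\barl_3\}$ as liftable.

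For the ``only if'' direction, suppose $l_1,l_2,l_3\in L_u$ form a triangle above $\{\barl_1,\barl_2,\barl_3\}$. Since each $\barl_i$ is a bitangent, $\varphi|_{\barl_i}$ is a perfect square $h_i^2$ for some $h_i\in H^0(\barl_i,\OOO_{\barl_i}(2))$ well-defined up to sign, and the two $Y$-lifts of $\barl_i$ are cut out locally by $w=\pm h_i$; fixing $l_i$ therefore pins down a sign of $h_i$. The triangle condition $\intfY{l_i,l_j}=1$ asserts that $l_i$ and $l_j$ share a common point of $\pi_u\inv(p_{ij})$, which translates exactly to $h_i(p_{ij})=h_j(p_{ij})$ for every pair. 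Thus $(h_1,h_2,h_3)$ is a compatible triple, i.e., an element of $H^0(\OOO_X(2))$ for $X:=\barl_1\cup \barl_2\cup \barl_3$.

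To finish, I would lift this compatible triple to a global $q\in \Gamma(2)$. Since $X$ is a plane cubic cut out by $f_1f_2f_3=0$, the ideal sheaf $I_X$ is isomorphic to $\OOO(-3)$, so $I_X(2)\cong \OOO(-1)$, whose $H^0$ and $H^1$ both vanish. The sequence $0\to I_X(2)\to \OOO(2)\to \OOO_X(2)\to 0$ then shows that the restriction map $\Gamma(2)\to H^0(\OOO_X(2))$ is an isomorphism, producing a unique $q\in \Gamma(2)$ with $q|_{\barl_i}=h_i$. Then $\varphi-q^2$ vanishes on $X$ and so is divisible by $f_1f_2f_3$ in $\Gamma(4)$, giving $\varphi-q^2=f_1f_2f_3\cdot f_4$ for some $f_4\in \Gamma(1)$. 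The main conceptual step, and the only place where the ``liftable'' hypothesis genuinely enters, is the translation of the triangle condition on $Y_u$ into the sign-compatibility $h_i(p_{ij})=h_j(p_{ij})$; once this dictionary is in place, the remainder is routine cohomology and polynomial manipulation.
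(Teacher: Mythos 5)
Your argument is correct, and your proof of the ``only if'' direction is genuinely different from the one in the paper. The paper's proof of that direction is indirect: it forms the finite \'etale covering $\barTTT\to\UUU$ of liftable triangles, invokes the monodromy transitivity of Proposition~\ref{prop:2520} (hence ultimately Harris's Theorem~\ref{thm:monodromy}) to see that $\barTTT$ is irreducible, and then deduces from the dominance of $\psi_{[1,1,1,1]}$ (Lemma~\ref{lem:dominant}, itself checked by a rank computation at a random point) that the induced map $\VVV_{[1,1,1,1]}\to\barTTT$ is dominant, so that over a general $u$ every liftable triangle arises from a factorization $f_1f_2f_3f_4+q^2$. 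You instead construct $q$ and $f_4$ explicitly: writing $\varphi|_{\barl_i}=h_i^2$ and identifying the two $Y$-lifts with the two signs of $h_i$, you translate $\intfY{l_i,l_j}=1$ into $h_i(p_{ij})=h_j(p_{ij})$, glue the $h_i$ to a section of $\OOO_X(2)$ on the triangle cubic $X=\barl_1\cup\barl_2\cup\barl_3$, lift it to $q\in\Gamma(2)$ using $H^0(\OOO(-1))=H^1(\OOO(-1))=0$, and factor $\varphi-q^2$ by $f_1f_2f_3$. This is more elementary (no monodromy input), effective, and makes transparent exactly which genericity is needed --- ordinary bitangents, no two meeting on $Q_u$, and (a point you should state explicitly, since the gluing criterion ``sections agree at the nodes'' presumes $X$ has three distinct ordinary nodes) no three of the bitangents concurrent; all of these hold under the standing assumption that $u$ is general. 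What the paper's softer argument buys is reusability: the same irreducibility-plus-dominance template is applied again verbatim in the proof of Proposition~\ref{prop:c1c2}, where an explicit construction would be harder.
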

\begin{proof}
The if-part follows from Lemma~\ref{lem:q^2}.
Let $\bar{\tau}\colon \barTTT\to \UUU$ be the finite \'etale covering 
obtained from the family 
$\set{\overline{T}_u}{u\in \UUU}$.
Then $\barTTT$ is irreducible
by Proposition~\ref{prop:2520}.
Let $p:=(f_1\sprime, \dots, f_4\sprime, q\sprime)$
be a point of $\VVV_{[1,1,1,1]}$, and we put 
\[
u\sprime:=\Psi_{[1,1,1,1]}(p) \;\in\; \UUU
\]
where $\VVV_{[1,1,1,1]}$ and $\Psi_{[1,1,1,1]}$ are given in Definition~\ref{def:Psi}.
Let  $\barl\sprime_i\subset \P^2$ be  the line  $\{f_i\sprime=0\}$.
By the if-part,
we have $\{\barl\sprime_1, \barl\sprime_2, \barl\sprime_3\}\in \overline{T}_{u\sprime}$.
By $p\mapsto \{\barl\sprime_1, \barl\sprime_2, \barl\sprime_3\}$, 
we obtain a morphism $\Psi_{\barTTT}\colon\VVV_{[1,1,1,1]}\to \barTTT$.
Since $\bar{\tau}\circ \Psi_{\barTTT}=\Psi_{[1,1,1,1]}$, $\bar{\tau}$ is \'etale,  $ \barTTT$ is irreducible, 
and $\Psi_{[1,1,1,1]}$ is dominant, 
we conclude that $\Psi_{\barTTT}$ is dominant.
Since $u\in \UUU$ is  general,
we obtain the proof.
\end{proof}
\begin{corollary}\label{cor:tetra}
There exists  a set  $ \barR_u$ consisting of 
$315$ subsets $\{\barl_a, \barl_b, \barl_c, \barl_d\}\subset \barL_u$ 
of size $4$ with the following properties:
a subset  $\{\barl_i, \barl_j, \barl_k\}\subset \barL_u $ of size $3$ is liftable
if and only if there exists an element $\{\barl_a, \barl_b, \barl_c, \barl_d\}\in  \barR_u$  
containing $\{\barl_i, \barl_j, \barl_k\}$.
\qed
\end{corollary}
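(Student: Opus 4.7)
The statement asserts that the liftable triangles of bitangents of $Q_u$ are exactly the $3$-subsets of $315$ distinguished $4$-element subsets of $\barL_u$. The plan is to work on the del Pezzo surface $Y_u$ and the lattice $\SY_u$: define a tetrad on $Y_u$ as a $4$-subset of $L_u$ whose pairwise intersections are all $1$, count such tetrads, and descend to $\barL_u$ by quotienting with the Geiser involution $\iota_u$.

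Given a triangle $\{l_1,l_2,l_3\}\in T_u$, I set $l_4:=2h_u-l_1-l_2-l_3\in\SY_u$. Using $\intfY{h_u,h_u}=2$, $\intfY{h_u,l_i}=1$, $\intfY{l_i,l_i}=-1$, and $\intfY{l_i,l_j}=1$ for $i\ne j$, one computes $\intfY{l_4,l_4}=-1$, $\intfY{h_u,l_4}=1$, and $\intfY{l_4,l_i}=1$ for $i=1,2,3$. By the characterization~\eqref{eq:Lu} together with the fact that on the degree-$2$ del Pezzo $Y_u$ the ample anti-canonical $h_u$ makes every $(-1)$-class $v$ with $\intfY{v,h_u}=1$ effective, $l_4$ lies in $L_u$. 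So $\{l_1,l_2,l_3,l_4\}$ is a tetrad, and by the symmetry of the relation $\sum l_i=2h_u$ every $3$-subset of it is again a triangle. Let $R_u$ denote the collection of such tetrads: each triangle extends to a unique tetrad (uniqueness of $l_4$ in the lattice) and each tetrad contains $\binom{4}{3}=4$ triangles, hence $|R_u|=|T_u|/4=630$.

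The involution $\iota_u$ sends $\{l_1,l_2,l_3,l_4\}$ to $\{h_u-l_1,h_u-l_2,h_u-l_3,h_u-l_4\}$---still a tetrad---and acts freely, since $l_i=h_u-l_j$ would force $\intfY{l_i,l_j}=\intfY{h_u,l_j}-\intfY{l_j,l_j}=2$, contradicting the tetrad condition. Consequently $\barR_u:=R_u/\iota_u$ has $315$ elements and embeds into the set of $4$-subsets of $\barL_u$ via $\pi_u$: once a lift $l_1$ of $\barl_1$ is fixed, the lifts $l_2,l_3,l_4$ are uniquely determined by $\intfY{l_1,\cdot}=1$, so the only two tetrads of $R_u$ projecting to a given $4$-subset of $\barL_u$ are $\iota_u$-related. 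The liftability criterion then follows: a $3$-subset of an element of $\barR_u$ is the $\pi_u$-image of a $3$-subset of some tetrad in $R_u$, hence a triangle in $T_u$ and thus a liftable triangle; conversely any liftable triangle lifts to $T_u$, extends uniquely to a tetrad in $R_u$, and its image in $\barR_u$ contains the original $3$-subset.

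I expect the main obstacle to be the uniqueness of the fourth bitangent. Proposition~\ref{prop:f1f2f3f4} supplies existence of $\barl_4$ through the factorization $Q_u=f_1f_2f_3f_4+q^2$, but uniqueness is not transparent at the polynomial level, where different choices of $q$ would a priori permit different candidates for $f_4$. The cleanest route is via the lattice identity $l_4=2h_u-l_1-l_2-l_3$ on $Y_u$, which simultaneously forces $l_4$, delivers a purely numerical extension of triangles to tetrads on $Y_u$, and reduces the count of $\barR_u$ to the elementary arithmetic $|T_u|/(4\cdot 2)=315$.
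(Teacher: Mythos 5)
Your argument is correct, and it takes a genuinely different route from the paper's. The paper deduces the corollary from Proposition~\ref{prop:f1f2f3f4}: a liftable triangle yields a factorization $Q_u=\{f_1f_2f_3f_4+q^2=0\}$, the fourth line $\{f_4=0\}$ is a bitangent by Lemma~\ref{lem:q^2}, the if-part of the proposition makes every $3$-subset of the resulting $4$-set liftable, and the count $315=1260/4$ comes out of the same explicit enumeration in $\SY_u$ that produced $|T_u|=2520$ and Corollary~\ref{cor:1260}. You instead stay entirely inside the lattice $\SY_u$ and manufacture the fourth class by the formula $l_4=2h_u-l_1-l_2-l_3$; your intersection computations are right, and the identification~\eqref{eq:Lu} does guarantee $l_4\in L_u$. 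The descent to $\barL_u$ is also correct: $\iota_u$ acts on $L_u$ by $l\mapsto h_u-l$, it carries tetrads to tetrads and acts freely on them for the reason you give, and your observation that a lift of $\barl_1$ determines the compatible lifts of $\barl_2,\barl_3,\barl_4$ shows that $R_u\to S_0^4(\barL_u)$ is exactly $2$-to-$1$ onto its image. What your route buys is a self-contained numerical description of $\barR_u$ (consistent with the $W(E_7)$-orbit of size $315$ appearing in the decomposition of $P^{(4,0)}$); what the paper's route buys is the classical geometric interpretation of a tetrad as a syzygetic quadruple of bitangents, i.e.\ the four lines occurring in a single factorization $f_1f_2f_3f_4+q^2$ of the quartic.

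The one step you should not leave as a parenthesis is the uniqueness of the completion. The formula $l_4=2h_u-l_1-l_2-l_3$ exhibits \emph{one} element of $L_u$ meeting $l_1,l_2,l_3$ each with multiplicity $1$; it does not by itself exclude a second such element, and the whole count $|R_u|=|T_u|/4$ hinges on there being none. Here is a one-line fix that stays in your framework. If $\{l_1,l_2,l_3,l_4'\}$ is any tetrad, put $w:=l_1+l_2+l_3+l_4'$; then $\intfY{w,w}=4\cdot(-1)+12\cdot 1=8$ and $\intfY{w,h_u}=4$, so $w-2h_u$ is an integral vector orthogonal to $h_u$, hence lies in $\Sigma_u$, and has self-intersection $8-16+8=0$. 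Since $\Sigma_u$ is negative definite, $w=2h_u$, that is, $l_4'=2h_u-l_1-l_2-l_3$. (Alternatively, one can verify uniqueness for a single triangle by inspecting the $56$ classes of $L_u$ and invoke the transitivity of the $W(E_7)$-action on $T_b$ from Proposition~\ref{prop:2520}.) With that sentence added, your proof is complete.
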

\subsection{Pairs of  splitting conics}\label{subsec:cc}
Recall  that 
$F_u\subset \SY_u$ is the set of classes $[c]$ of 
$Y$-lifts $c$ of  $4$-tangent conics $\barc$ of $Q_u$,
and that $\barF_u=F_u/\gens{\iota_u}$
is regarded 
as the set of connected families of $4$-tangent conics of $Q_u$,
or equivalently as  the set of connected families of splitting conics of $Q_u$.
For a splitting conic $\barc$,
let $[\barc]\in \barF_u$ denote 
the connected family containing $\barc$.
By Theorem~\ref{thm:monodromy}, we obtain the following:
\begin{proposition}\label{prop:Ftransitive}
By the monodromy, 
$\pione(\UUU, b)$ acts transitively on $F_b$ and hence on $\barF_b$.
\qed
\end{proposition}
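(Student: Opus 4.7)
The plan is to reduce the statement to a standard fact about the Weyl group $W(E_7)$, by exploiting Theorem~\ref{thm:monodromy} exactly as was done for $\disjsevenlist_b$ and $\overline{T}_b$ in the preceding arguments.

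First I would rewrite $F_b$ in the form most convenient for applying the monodromy result. By the second identification in~\eqref{eq:Fu}, the map $v \mapsto v - \tfrac12 \intfY{v,h_b}\, h_b$ identifies $F_b$ with
\[
\set{v \in \Sigma_b}{\intfY{v,v} = -2},
\]
that is, with the root system of the negative-definite lattice $\Sigma_b$ of type $E_7$. This identification is $\OG(\SY_b, h_b)$-equivariant, since every element of $\OG(\SY_b, h_b)$ fixes $h_b$ and therefore preserves $\Sigma_b = (\Z h_b)^{\perp}$.

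Next I would invoke Theorem~\ref{thm:monodromy}, which asserts that the monodromy homomorphism $\pi_1(\UUU,b) \to \OG(\SY_b,h_b) \cong W(E_7)$ is surjective. Hence the monodromy action on $F_b$ factors through, and realizes, the full action of $W(E_7)$ on the root system of $E_7$. It is a standard fact (cf.\ Bourbaki) that the Weyl group of an irreducible simply-laced root system acts transitively on its roots; applied to $E_7$, this gives transitivity on the 126-element set $F_b$.

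Finally, since the family of involutions $\iotaUUU$ commutes with the monodromy action (both $\iota_b$ and every element of the image of $\pi_1(\UUU,b)$ lie in $\OG(\SY_b,h_b)$, and $\iota_b$ acts as $-1$ on $\Sigma_b$), the action descends to the quotient $\barF_b = F_b/\gens{\iota_b}$, and transitivity is preserved under quotients. This yields the second claim. There is no real obstacle here: the only step that is not immediate is the transitivity of $W(E_7)$ on roots, and this is a classical consequence of the fact that any two roots of an irreducible simply-laced root system are related by a product of reflections in the roots.
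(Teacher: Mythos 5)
Your proof is correct and follows essentially the same route as the paper, which derives the proposition directly from Theorem~\ref{thm:monodromy} together with the identification of $F_b$ with the $126$ roots of $\Sigma_b\cong E_7$ and the classical transitivity of $W(E_7)$ on its root system. Your write-up merely makes explicit the steps (the $\OG(\SY_b,h_b)$-equivariant projection $v\mapsto v-h_b$ and the descent to $\barF_b=F_b/\gens{\iota_b}$) that the paper leaves implicit.
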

\begin{definition}
Let $\barc$ be a splitting conic of $Q_u$.
We say that 
a decomposition   $\pi_u^*(\barc)=c+c\sprime$
is  \emph{normal}
if  each of $c$ and $c\sprime$ is a $Y$-lift of $\barc$.
\end{definition}
Note that, if $\barc$ is smooth, then the  decomposition $\pi_u^*(\barc)=c+c\sprime$ is  normal,
whereas 
if $\barc$ is a sum  of two bitangents $\barl+\barl\sprime$,
then $\pi_u^*(\barc)=c+c\sprime$ being  normal means that 
$c=l+l\sprime$ with $\intfY{l, l\sprime}=1$.
\begin{definition}
Let $\barc_1$ and $\barc_2$  be  splitting conics of $Q_u$, and 
let    $\pi_u^*(\barc_1)=c_1+c_1\sprime$ and $\pi_u^*(\barc_2)=c_2+c_2\sprime$
be the normal decompositions.
We put
\[
I([\barc_1], [\barc_2]):= \left[
\begin{array}{cc} 
\intfY{c_1, c_2} &  \intfY{c_1, c_2\sprime } \\ 
\intfY{c_1\sprime, c_2} &  \intfY{c_1\sprime, c_2\sprime } 
\end{array}
\right].
\]
Since we can make switchings  $c_1 \leftrightarrow c_1\sprime$ and 
 $c_2  \leftrightarrow c_2\sprime$, 
the matrix  $I([\barc_1], [\barc_2])$ is well-defined only up to
the transpositions of the two rows and of the two columns.
\end{definition}
%
We have calculated $F_u\subset S_u$ explicitly.
Using this data, we see that 
the matrix $I([\barc_1], [\barc_2])$ is one of the following:
\begin{eqnarray*}
I_A &:=& \left[\begin{array}{cc} 0 & 4 \\ 4 & 0 \end{array}\right] \textrm{or} \left[\begin{array}{cc} 4 & 0 \\ 0 & 4 \end{array}\right], \\
I_B &:=& \left[\begin{array}{cc} 2 & 2 \\ 2 & 2 \end{array}\right], \\
I_C &:=& \left[\begin{array}{cc} 1 & 3 \\ 3 & 1 \end{array}\right] \textrm{or} \left[\begin{array}{cc} 3 & 1 \\ 1 & 3 \end{array}\right].\\
\end{eqnarray*}
%
%
%
\begin{proposition}\label{prop:c1c2}
Let $\barc_1=\{g_1=0\}$ and $\barc_2=\{g_2=0\}$  be  splitting conics of  $Q_u$.
Consider the following conditions:
\begin{enumerate}[{\rm (i)}]
\item $[\barc_1]=[\barc_2]$, that is, 
$\barc_1$ and $\barc_2$  belong to the  same connected family.
\item The matrix $I([\barc_1], [\barc_2])$ is equal to $I_A$.
\item There exists a polynomial  $q\in \Gamma(2)$ such that $Q_u$ is defined by 
$g_1g_2+q^2=0$.
\end{enumerate}
Then we have  ${\rm (iii)}\Longrightarrow {\rm (ii)}\Longleftrightarrow {\rm (i)}$.
If ${\rm (i)}$ holds and $\barc_1$ and $\barc_2$ are general in the connected family 
$[\barc_1]=[\barc_2]\in \barF_u$
of splitting conics, then ${\rm (iii)}$ holds.
\end{proposition}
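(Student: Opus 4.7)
My plan is to prove the three implications in turn. The directions $(\mathrm{iii}) \Rightarrow (\mathrm{ii})$ and $(\mathrm{ii}) \Leftrightarrow (\mathrm{i})$ are short, relying respectively on Lemma~\ref{lem:q^2} and the lattice structure of $\SY_u$. The main content lies in the direction $(\mathrm{i})$ plus genericity $\Rightarrow (\mathrm{iii})$, which I plan to attack by transferring the statement to the curve $Q_u$ via the Picard-theoretic description of $\barF_u$ in Remark~\ref{rem:Pic}.

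For $(\mathrm{iii}) \Rightarrow (\mathrm{ii})$, I apply Lemma~\ref{lem:q^2} with $f := g_1 g_2$ to obtain a divisor $D \subset Y_u$ mapping isomorphically to $\barc_1 \cup \barc_2$, so $D = c_1 + c_2$ with $c_i$ a $Y$-lift of $\barc_i$. In the local model $Y_u = \{w^2 = g_1 g_2 + q^2\}$, $D$ is cut out by $w - \pi_u^*(q) = 0$; since this is a section of $\OOO_{Y_u}(-2 K_{Y_u}) = \OOO_{Y_u}(2 h_u)$, the class $[D] = [c_1]+[c_2]$ equals $2 h_u$. Combined with the standard relation $[c_1]+[c_1'] = 2 h_u$, this forces $[c_2] = [c_1']$. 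From Table~\ref{table:pullbacks}, $\intfY{c_1, c_2} = \intfY{c_1, c_1'} = 4$ and $\intfY{c_1, c_2'} = \intfY{c_1, c_1} = 0$, so $I([\barc_1], [\barc_2]) = I_A$.

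For $(\mathrm{ii}) \Leftrightarrow (\mathrm{i})$, write $[c_i] = h_u + e_i$ with $e_i \in \Sigma_u$; by Table~\ref{table:pullbacks}, $\intfY{e_i, e_i} = -2$, so each $e_i$ is a root of $\Sigma_u \cong E_7$. The direction $(\mathrm{i}) \Rightarrow (\mathrm{ii})$ is immediate: after possibly swapping $c_2 \leftrightarrow c_2'$, we may assume $[c_1] = [c_2]$, giving $\intfY{c_1, c_2} = 0$ and $\intfY{c_1, c_2'} = \intfY{c_1, 2 h_u - c_1} = 4$. Conversely, if $(\mathrm{ii})$ holds with labeling so that $\intfY{c_1, c_2} = 0$, then $\intfY{e_1, e_2} = -2 = \intfY{e_i, e_i}$, and Cauchy--Schwarz in the negative-definite lattice $\Sigma_u \otimes \R$ forces $e_2 = -e_1$. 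Hence $[c_2] = 2 h_u - [c_1] = [c_1']$, and in $\barF_u = F_u/\gens{\iota_u}$ this means $[\barc_1] = [\barc_2]$.

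For the remaining direction, Remark~\ref{rem:Pic} translates $[\barc_1] = [\barc_2]$ into $\Theta_u(\barc_1) \sim \Theta_u(\barc_2)$ on $Q_u$; combined with $2 \Theta_u(\barc_i) \sim 2 \Lambda_u \sim 2 K_{Q_u}$ this gives $\Theta_u(\barc_1) + \Theta_u(\barc_2) \sim 2 K_{Q_u}$. The restriction map $\Gamma(2) \to H^0(Q_u, 2 K_{Q_u})$ is an isomorphism of $6$-dimensional vector spaces (injective since no conic contains the irreducible quartic $Q_u$, hence bijective by dimension count), so there exists $q \in \Gamma(2)$, unique up to scalar, with $\div(q|_{Q_u}) = \Theta_u(\barc_1) + \Theta_u(\barc_2)$. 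Both $g_1 g_2|_{Q_u}$ and $q^2|_{Q_u}$ are sections of $\OOO_{\P^2}(4)|_{Q_u}$ with divisor $2 \Theta_u(\barc_1) + 2 \Theta_u(\barc_2)$, hence proportional; lifting this identity to $\Gamma(4)$ gives $g_1 g_2 + c \, q^2 = \mu \, \varphi_u$ for some $c \in \C^*$ and $\mu \in \C$. After replacing $q$ by $\sqrt{c}\, q$, this is the relation required in $(\mathrm{iii})$, provided $\mu \neq 0$. The main obstacle here is verifying $\mu \neq 0$: this fails precisely when $g_1 g_2$ is itself proportional to a square in $\Gamma(4)$, which, since $\barc_1$ and $\barc_2$ are reduced conics, occurs iff $\barc_1 = \barc_2$; the genericity hypothesis excludes this diagonal.
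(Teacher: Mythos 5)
Your proof is correct and reaches the same conclusions, but it takes a genuinely different route in both of the substantive implications. For ${\rm (ii)}\Rightarrow{\rm (i)}$, the paper passes to the $K3$ surface: setting $f_i=\eta_u^*(c_i)$, the relation $\intfX{f_1,f_2}=2\intfY{c_1,c_2}=0$ forces $f_2$ to be a fibre of the elliptic fibration $\phi_{[c_1]}$, whence $[\barc_1]=[\barc_2]$; you instead argue purely inside the root lattice $\Sigma_u$ via Cauchy--Schwarz, which is more elementary and bypasses the $K3$ machinery. (One harmless sign slip: $\intfY{e_1,e_2}=-2=\intfY{e_1,e_1}$ forces $e_2=e_1$, not $e_2=-e_1$, so you get $[c_2]=[c_1]$ rather than $[c_1\sprime]$; either way the image in $\barF_u$ is the same and the conclusion stands.) For ${\rm (i)}$ plus genericity $\Rightarrow{\rm (iii)}$, the paper runs an irreducibility-and-dominance argument: $\barFFF$ is irreducible by monodromy, $\Psi_{\barFFF}\colon\VVV_{[2,2]}\to\barFFF$ is dominant, and a dimension count on the fibre $W$ shows $W\to\P^1\times\P^1$ is dominant --- this is precisely where ``general in the family'' enters. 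You instead translate $[\barc_1]=[\barc_2]$ into $\Theta_u(\barc_1)\sim\Theta_u(\barc_2)$ on $Q_u$ via Remark~\ref{rem:Pic} and produce $q$ directly from the isomorphism $\Gamma(2)\cong H^0(Q_u,2K_{Q_u})$; this classical ``syzygetic conics'' argument is more explicit and in fact yields ${\rm (iii)}$ for \emph{every} pair of distinct $4$-tangent members of the family, not merely general ones. The trade-offs: your route leans on Remark~\ref{rem:Pic}, which the paper states without proof and never uses in its own arguments, so to be self-contained you would need to justify at least that conics in one connected family cut out linearly equivalent divisors $\Theta_u$; and your $\Theta_u(\barc)$ only behaves as required for honest $4$-tangent conics, so genericity is still needed to exclude $3$-tangent and reducible members of the pencil.
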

\begin{proof}
The implication ${\rm (i)}\Longrightarrow {\rm (ii)}$ follows immediately from Table~\ref{table:pullbacks},
and the implication ${\rm (iii)}\Longrightarrow {\rm (ii)}$ follows  from Lemma~\ref{lem:q^2}.
Suppose that ${\rm (ii)}$ holds.
Let $\pi_u^*(\barc_1)=c_1+c_1\sprime$ and $\pi_u^*(\barc_2)=c_2+c_2\sprime$ be the normal decompositions.
Interchanging  $c_2$ and $c_2\sprime$  if necessary,
we can assume that   $\intfY{c_1, c_2}=0$.
We put $f_1:=\eta_u^*(c_1)$ and $f_2:=\eta_u^*(c_2)$.
Note that $f_1$ is a fiber of the elliptic fibration $\phi_1\in \Ells(X_u)$
corresponding to the class $ [c_1]\in F_u$ of $c_1$ by $ F_u\cong \Ells(X_u)$.
Since $\intfX{f_1, f_2}=2\intfY{c_1, c_2}=0$,
we conclude that $f_2$ is a fiber of $\phi_1$,
that is, 
the  elliptic fibration 
corresponding to $ [c_2]\in F_u\cong \Ells(X_u)$   is equal to $\phi_1$.
Therefore $\barc_1$ and $\barc_2$ belong to the same connected  family of splitting conics, 
and ${\rm (i)}$ holds.
Thus ${\rm (iii)}\Longrightarrow {\rm (ii)}\Longleftrightarrow {\rm (i)}$ is proved.
\par
Suppose that $[\barc_1]=[\barc_2]$.
Let $\sigma\colon \barFFF\to \UUU$ be the finite \'etale covering  
defined by the family $\set{\barF_u}{u\in \UUU}$.
By Proposition~\ref{prop:Ftransitive},
we see that $\barFFF$ is irreducible.
Let $p:=(g_1\sprime, g_2\sprime, q\sprime)$
be a point of $\VVV_{[2,2]}$,
and we put $u\sprime:=\Psi_{[2,2]} (p)\in \UUU$; 
\[
Q_{u\sprime}=\{g_1\sprime g_2\sprime+q^{\prime\, 2}=0\}.
\]
Let  $\barc\sprime_i$ be  the splitting conic  $\{g_i\sprime=0\}$ of $Q_{u\sprime}$ for $i=1,2$.
By the implication ${\rm (iii)}\Longrightarrow {\rm (i)}$,
we have $[\barc_1\sprime]=[\barc_2\sprime]$ in $\barF_{u\sprime}$.
By $p\mapsto [\barc_1\sprime]$, 
we obtain 
a morphism  $\Psi_{\barFFF}\colon \VVV_{[2,2]}\to \barFFF$.
By the same argument as in the proof of Proposition~\ref{prop:f1f2f3f4}, 
we see that $\Psi_{\barFFF}$ is dominant.
Since $u$ is general in $\UUU$,
the point $(u, [\barc_1])=(u, [\barc_2])$ is general in $\barFFF$
and the fiber $W$ of $\Psi_{\barFFF}$ over $(u, [\barc_1])$ is of dimension
\[
\dim \Gamma(2,2\colon 2) -\dim \UUU=18-14=4.
\]
Let $S:=\set{\barc(t)}{t\in \P^1}$ be the connected family of splitting conics
containing $\barc_1$ and $\barc_2$.
If $(g_1\sprime, g_2\sprime, q\sprime)$ is a point of the fiber $W$, then 
we have two members $\barc\sprime_1=\{g_1\sprime=0\}$ and  $\barc\sprime_2=\{g_2\sprime=0\}$ 
of $S$,
and thus we have a morphism $W\to \P^1\times \P^1$,
where $\P^1$ is the base curve  of the family $S$.
If two points 
$(g_1\sprime, g_2\sprime, q\sprime)$ and
$(g_1\spprime, g_2\spprime, q\spprime)$ of $W$ 
are mapped to the same point of $\P^1\times \P^1$, 
then there exist scalars 
$\lambda_1, \lambda_2\in \C\sptimes$
such that 
$g_1\spprime=\lambda_1 g_1\sprime$ and 
$g_2\spprime=\lambda_2 g_2\sprime$.
By the dimension reason, we see that $W\to \P^1\times \P^1$ is dominant.
Hence,  if $\barc_1=\{g_1=0\}$ and $ \barc_2=\{g_2=0\}$ are general members of the family $S$,
there exists  a polynomial $q\in \Gamma(2)$ such that 
$(g_1, g_2, q)\in W$, that is, $Q_u$ is defined by $g_1g_2+q^2=0$.
\end{proof}
The following two propositions are confirmed by direct computation.
%
\begin{proposition}
Among the $1953$ non-ordered pairs $\{[\barc_1], [\barc_2]\}$ of distinct elements $[\barc_1], [\barc_2]$ of $\barF_u$, 
exactly $945$ pairs  satisfy $I([\barc_1], [\barc_2])=I_B$, and the remaining 
$1008$   pairs  satisfy  $I([\barc_1], [\barc_2])=I_C$.
When $u=b$, 
these two sets of pairs are the orbits of 
the monodromy action of $\pione(\UUU, b)$ on the set of non-ordered pairs of  elements of $\barF_b$.
\qed
\end{proposition}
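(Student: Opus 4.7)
The plan is to identify $\barF_u$ with the set of sign orbits of roots in the negative-definite $E_7$ root lattice $\Sigma_u$, to convert $I([\barc_1],[\barc_2])$ into a numerical invariant of the corresponding pair of roots, and then to invoke Theorem~\ref{thm:monodromy} for the monodromy statement.

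First I would use the orthogonal decomposition $\SY_u\otimes\Q=\Q h_u\oplus(\Sigma_u\otimes\Q)$ together with $\intfY{h_u,h_u}=2$ and the second description of $F_u$ in~\eqref{eq:Fu} to write $[c]=h_u+\alpha$ with $\alpha\in\Sigma_u$ a root (i.e.\ $\intfY{\alpha,\alpha}=-2$); since $\iota_u$ acts as $-1$ on $\Sigma_u$, the other $Y$-lift satisfies $[c']=h_u-\alpha$. Substituting into the four entries of $I([\barc_1],[\barc_2])$ gives each entry as $2\pm\intfY{\alpha_1,\alpha_2}$, where $\alpha_1,\alpha_2\in\Sigma_u$ are roots representing $[\barc_1],[\barc_2]$. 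Because $[\barc_1]\ne[\barc_2]$ forces $\alpha_1\ne\pm\alpha_2$, the root pairing $\intfY{\alpha_1,\alpha_2}$ takes values in $\{-1,0,+1\}$, so $I([\barc_1],[\barc_2])=I_B$ precisely when $\alpha_1\perp\alpha_2$ and $I([\barc_1],[\barc_2])=I_C$ precisely when $|\intfY{\alpha_1,\alpha_2}|=1$; the option $I_A$ is automatically excluded.

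Counting is then elementary in the $E_7$ root system: each root is orthogonal to exactly $60$ other roots (forming a $D_6$-subsystem) and pairs to $\pm 1$ with the remaining $126-2-60=64$ roots (split into $32+32$ by sign). Dividing by the swap $\alpha_1\leftrightarrow\alpha_2$ and by the two independent sign changes $\alpha_i\mapsto-\alpha_i$ — a factor of $8$ in all — yields $126\cdot 60/8=945$ unordered pairs of type $I_B$ and $126\cdot 64/8=1008$ of type $I_C$, whose sum is $1953=\binom{63}{2}$.

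For the monodromy statement, Theorem~\ref{thm:monodromy} identifies the monodromy image with $W(E_7)$ acting on $\barF_b$ via the quotient action on sign orbits of roots, so it suffices to show that $W(E_7)$ acts transitively on each of the two sets of pairs. Transitivity on roots is standard, so I would fix $\alpha_1$: its stabilizer in $W(E_7)$ is $W(D_6)$ of order $|W(E_7)|/126=23040$, and the stabilizer of the unordered pair $\{\pm\alpha_1\}$ is twice as large, with $-\mathrm{id}\in W(E_7)$ as the extra coset representative. This $W(D_6)$ acts transitively on its own $60$ roots, settling the $I_B$ case, and preserves the sign of $\intfY{\alpha_1,\cdot}$ on the $64$ non-orthogonal roots; after the identification $\beta\mapsto\beta\pm\alpha_1/2$, the two sets of $32$ such roots are exactly the minimal vectors of the two spinor cosets of $D_6^*/D_6$, which $-\mathrm{id}$ interchanges. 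The main obstacle is verifying that $W(D_6)$ is transitive on the $32$ minimal vectors of a spinor coset; this is the irreducibility of the half-spin representation of $D_6$, and fits naturally with the paper's computational style by direct enumeration on the explicit realization of $F_u\subset\SY_u$ already used to tabulate the possible matrices.
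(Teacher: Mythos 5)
Your argument is correct, but it takes a genuinely different route from the paper. The paper disposes of this proposition (together with the following one) by \emph{direct computation}: the $126$ vectors of $F_u\subset \SY_u$ are listed explicitly, the matrices $I([\barc_1],[\barc_2])$ are evaluated for all $1953$ pairs, and the orbit decomposition under the permutation action of $W(E_7)$ is obtained with {\tt GAP} --- the same computational framework used throughout Sections~\ref{subsec:computationalexample} and~\ref{sec:config}. You instead give a conceptual root-theoretic proof: writing $[c]=h_u+\alpha$ with $\alpha$ a root of $\Sigma_u$ (valid since $\intfY{h_u,h_u}=2$), so that every entry of $I([\barc_1],[\barc_2])$ is $2\pm\intfY{\alpha_1,\alpha_2}$, reduces the trichotomy $I_A/I_B/I_C$ to $\intfY{\alpha_1,\alpha_2}\in\{\pm2\}/\{0\}/\{\pm1\}$, the counts $945$ and $1008$ to the decomposition $126=2+60+64$ of the $E_7$ roots relative to a fixed root, and the monodromy statement (via Theorem~\ref{thm:monodromy}) to transitivity of $W(D_6)$ on the $60$ roots of $\alpha^\perp$ and on the $32$ minimal vectors of a spinor coset of $D_6^*$. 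This buys a machine-free, checkable explanation of \emph{why} the numbers are $945$ and $1008$, at the cost of some standard but nontrivial root-system facts; the paper's computation is shorter to state and uniform with its treatment of the harder cases $P^{(m,n)}$. One small inaccuracy, which does not affect the conclusion: the $64$ roots $\beta$ with $\intfY{\alpha_1,\beta}=\pm1$ do not project onto the two spinor cosets of $D_6^*/D_6$; the projection $\beta\mapsto\beta+\tfrac12\epsilon\alpha_1$ identifies $\beta$ and $\beta+\alpha_1$, so both sets of $32$ map bijectively onto the minimal vectors of one and the same spinor coset (and $-\mathrm{id}$ preserves each spinor coset of $D_6$ since $6$ is even). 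What your argument actually needs --- that $W(D_6)$ is transitive on the $32$ roots with $\intfY{\alpha_1,\beta}=+1$, hence on the $32$ classes $\{\pm\beta\}$ --- is exactly the half-spin orbit statement you cite, and it holds.
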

Recall that each connected family $[c]\in F_u$ of $Y$-lifts of splitting conics 
contains exactly $6$  reducible members,
and the irreducible components $l, l\sprime$ of a reducible member
satisfy $\intfY{l, l\sprime}=1$.
We have a surjective map
\[
\set{\{l, l\sprime\}\;}{\; l, l\sprime\in L_u, \;\intfY{l, l\sprime}=1\;} \to F_u
\]
defined by $\{l,l\sprime\}\mapsto [l]+[l\sprime]$.
Each fiber of size $6$.
The following gives how the cases $I([\barc_1], [\barc_2])=I_B$ and $I([\barc_1], [\barc_2])=I_C$ are distinguished.
\begin{proposition}
Let $[c_1]$ and $[c_2]$ be elements of $F_u$,
and let $[\barc_1]$ and $[\barc_2]$ be their images by  $F_u\to \barF_u$.
Then $I([\barc_1], [\barc_2])=I_B$ holds if and only if there exists a triangle 
$\{l_1, l_2, l_3\}$ on $Y_u$ such that
$[c_1]=[l_1]+[l_3]$ and $[c_2]=[l_2]+[l_3]$.
\qed
\end{proposition}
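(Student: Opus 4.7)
The plan is to prove the two implications separately, treating the forward direction by direct lattice computation and the converse by reducing to an existence statement about the $E_7$ root system.

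For the ``if'' direction, suppose $[c_1] = [l_1] + [l_3]$ and $[c_2] = [l_2] + [l_3]$ for a triangle $\{l_1, l_2, l_3\}$. Since $\iota_u$ fixes $h_u$ and acts as $-1$ on its orthogonal complement $\Sigma_u$, one has $\iota_u([l]) = h_u - [l]$ for every $Y$-lift $l$. Consequently
\[
\intfY{l_i, l_j'} = \intfY{l_i, h_u - l_j} = 1 - \intfY{l_i, l_j},
\]
which equals $0$ when $i \neq j$ (by the triangle condition) and $2$ when $i = j$ (by Proposition~\ref{prop:lifts}). The normal decompositions are $\pi_u^*(\barc_1) = c_1 + c_1'$ with $c_1' = l_1' + l_3'$, and similarly for $\barc_2$. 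A direct expansion of each of the four inner products yields $\intfY{c_1, c_2} = \intfY{c_1, c_2'} = \intfY{c_1', c_2} = \intfY{c_1', c_2'} = 2$, so $I([\barc_1], [\barc_2]) = I_B$.

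For the ``only if'' direction, assume $I([\barc_1], [\barc_2]) = I_B$, so in particular $\intfY{c_1, c_2} = 2$. I would reduce to finding an $l_3 \in L_u$ with $\intfY{l_3, c_1} = \intfY{l_3, c_2} = 0$. Given such an $l_3$, setting $l_1 := c_1 - l_3$ and $l_2 := c_2 - l_3$ yields classes with $\intfY{l_i, h_u} = 1$ and $\intfY{l_i, l_i} = -1$, hence $l_1, l_2 \in L_u$ by~\eqref{eq:Lu}. Moreover $\intfY{l_1, l_3} = \intfY{l_2, l_3} = 1$ and $\intfY{l_1, l_2} = \intfY{c_1, c_2} + \intfY{l_3, l_3} = 2 - 1 = 1$, so $\{l_1, l_2, l_3\}$ is a triangle realizing the required decomposition.

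To produce $l_3$, write $c_i = h_u + \alpha_i$ with $\alpha_i$ a root of $\Sigma_u \cong E_7$; the condition $\intfY{c_1, c_2} = 2$ translates to $\intf{\alpha_1, \alpha_2} = 0$. Under the orthogonal projection to $\Sigma_u^\vee$, the sought $l_3$ corresponds to a norm $-3/2$ vector $x_0 \in \Sigma_u^\vee$ with $\intf{\alpha_i, x_0} = -1$ for $i=1,2$. Since $W(E_7)$ acts transitively on ordered pairs of orthogonal roots (the stabilizer of a root is $W(D_6)$, which acts transitively on the $60$ roots of its orthogonal $D_6$-subsystem), existence of such $x_0$ for a single example suffices, and such an example is furnished by the ``if'' direction applied to any liftable triangle (which exists by Corollary~\ref{cor:1260}). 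The main obstacle is this Weyl-group-transitivity ingredient; alternatively one may bypass it by invoking the preceding proposition's monodromy transitivity on the $945$ unordered $I_B$-pairs and noting that the subset admitting such a witness $l_3$ is monodromy-invariant and non-empty.
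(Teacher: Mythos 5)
Your proof is correct, but it takes a genuinely different route from the paper: there the statement is one of the results ``confirmed by direct computation,'' i.e.\ it is checked mechanically on the explicitly enumerated sets $L_u, F_u\subset \SY_u$ (as with the neighbouring propositions on $I_B$ versus $I_C$ and on $J_\alpha$ versus $J_\beta$), and no conceptual argument is given. Your argument replaces the enumeration by structure theory: the key observations are that $\iota_u$ acts by $[l]\mapsto h_u-[l]$ on $L_u$ and $[c]\mapsto 2h_u-[c]$ on $F_u$, so that writing $[c_i]=h_u+\alpha_i$ with $\alpha_i$ a root of $\Sigma_u$ shows $I([\barc_1],[\barc_2])=I_B$ is equivalent to $\intf{\alpha_1,\alpha_2}=0$ (note the rows and columns of $I$ sum to $4$, so one entry equal to $2$ forces all four), and the existence of the witness $l_3$ then reduces, via \eqref{eq:Lu}, to finding a norm $-3/2$ vector of $\Sigma_u\dual$ pairing to $-1$ with both $\alpha_i$, which follows from $W(E_7)$-transitivity on ordered pairs of orthogonal roots (stabilizer $W(D_6)$ acting transitively on the $60$ roots of the orthogonal $D_6$) together with one explicit instance supplied by your ``if'' direction and Corollary~\ref{cor:1260}. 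All the individual computations check out ($\intfY{l_1,l_3}=0-(-1)=1$, $\intfY{l_1,l_2}=2-0-0-1=1$, distinctness of the $l_i$ since $\intfY{l_i,l_j}=1\ne -1$), and your fallback via the monodromy transitivity on the $945$ unordered $I_B$-pairs is also legitimate. What your approach buys is a human-verifiable proof and the conceptual dividend that $I_B$ means exactly orthogonality of the associated roots (consistent with $945=63\cdot 30/2$); what the paper's approach buys is brevity, since the explicit lattice data had already been computed for the orbit counts.
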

\subsection{Pairs of a bitangent and a splitting conic}\label{subsec:lc}
Let $\barl$ be a bitangent of $Q_u$ with $\pi_u^*(\barl)=l+l\sprime$,
and let $\barc$ be a splitting conic of $Q_u$ with the normal decomposition 
$\pi_u^*(\barc)=c+c\sprime$.
We put
\[
J(\barl, [\barc]):= \left[
\begin{array}{cc} 
\intfY{l, c} &  \intfY{l, c\sprime } \\ 
\intfY{l\sprime, c} &  \intfY{l\sprime, c\sprime } 
\end{array}
\right].
\]
The matrix $J(\barl, [\barc])$ is one of the following:
\begin{eqnarray*}
J_{\alpha} &:=& \left[\begin{array}{cc} 0 & 2 \\ 2& 0 \end{array}\right] \textrm{or} \left[\begin{array}{cc} 2 & 0 \\ 0 & 2 \end{array}\right], \\
J_{\beta} &:=& \left[\begin{array}{cc} 1 & 1 \\ 1 & 1 \end{array}\right].
\end{eqnarray*}
By direct computation,
we confirm the following:
\begin{proposition}
Let $\barl$ be a bitangent of $Q_u$,
and $\barc$ a splitting conic of $Q_u$.
Then $J(\barl, [\barc])$ is equal to $J_{\alpha}$ 
if and only if the connected family $[\barc]\in \barF_u$ 
of splitting conics 
 has 
a singular member containing $\barl$ as an irreducible component.
\par
When $u=b$, 
the monodromy action of $\pione (\UUU, b)$ acts on the set of pairs $(\barl, [\barc])\in \barL_b\times\barF_b$ 
with $J(\barl, [\barc])=J_{\alpha}$ transitively,
and the set of pairs $(\barl, [\barc])$ 
with $J(\barl, [\barc])=J_{\beta}$ also  transitively.
\qed
\end{proposition}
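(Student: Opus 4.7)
The plan is to handle the geometric characterization of $J_\alpha$ versus $J_\beta$ first, and then derive both transitivity statements from Theorem~\ref{thm:monodromy} together with an explicit orbit computation in $W(E_7)$.

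For the equivalence $J(\barl, [\barc]) = J_\alpha$ $\Longleftrightarrow$ ``$\barl$ is a component of a singular member of $[\barc]$'', my strategy is to pass to the $K3$ surface $X_u$ and the elliptic fibration $\phi_v\colon X_u \to \P^1$ of Proposition~\ref{prop:Ells} attached to $v=[c] \in F_u$. The key identity is $\intfX{\eta_u^*(l), f} = 2\intfY{l, c}$ for a fiber $f$ of $\phi_v$; it translates the vanishing $\intfY{l, c} = 0$ into the condition that the $(-2)$-curve $\eta_u^*(l) \in \Rats(X_u)$ (Proposition~\ref{prop:Rats}) lies inside some fiber of $\phi_v$. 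By Proposition~\ref{prop:Ells} the only reducible fibers are the six of type $\typeI_2$, and they correspond bijectively to the six reducible members $\barl_1 + \barl_2$ of the family $[\barc]$: the two $(-2)$-curve components of such a fiber are $\eta_u^*(l_1), \eta_u^*(l_2)$ for the $Y$-lifts appearing in the normal decomposition $[c] = [l_1] + [l_2]$. Thus $\eta_u^*(l)$ lies in a fiber of $\phi_v$ if and only if $\barl$ is one of the components of some singular member of $[\barc]$, which is precisely $J(\barl, [\barc]) = J_\alpha$.

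To verify the forward direction by an intersection-number computation, given a singular member $\barl_1 + \barl_2$ of $[\barc]$ I would choose $Y$-lifts $l_i, l_i\sprime$ of $\barl_i$ so that the normal decomposition reads $c = l_1 + l_2$, $c\sprime = l_1\sprime + l_2\sprime$ with $\intfY{l_1, l_2} = \intfY{l_1\sprime, l_2\sprime} = 1$. Then $\intfY{l_1, c} = -1 + 1 = 0$; and $\intfY{l_1, h_u} = 1 = \intfY{l_1, l_2} + \intfY{l_1, l_2\sprime}$ forces $\intfY{l_1, l_2\sprime} = 0$, whence $\intfY{l_1, c\sprime} = \intfY{l_1, l_1\sprime} + 0 = 2$. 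Symmetry yields the second row of $J_\alpha$.

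For the two transitivity claims, I would apply Theorem~\ref{thm:monodromy} to reduce to an orbit computation for $W(E_7)$ acting on $\barL \times \barF$ through the identifications of Section~\ref{subsec:computationalexample}. A brief count, using the equivalence above together with the observation that $[l] + [l_1] = [l] + [l_2]$ in $\SY_u$ forces $\barl_1 = \barl_2$ (so that the $12$ bitangents in the six singular members of a fixed $[\barc]$ are pairwise distinct), yields $63 \cdot 12 = 756$ pairs of type $J_\alpha$ and $63 \cdot 16 = 1008$ of type $J_\beta$. Using the permutation representations of the seven standard generators of $W(E_7)$ on $\barL$ and on $\barF$ already prepared for the orbit counts in Section~\ref{subsec:computationalexample}, I would then compute the $W(E_7)$-orbit decomposition of $\barL \times \barF$ and verify that it consists of exactly two orbits, of the predicted sizes $756$ and $1008$.

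The main obstacle I expect is precisely this last transitivity step: while the characterization via reducible fibers is geometrically transparent, the one-orbit statement for each of the two types is at heart a combinatorial fact about $W(E_7)$ acting on the relevant minimal vectors of $\Sigma\dual$ and roots of $\Sigma$, which I would verify by the same direct {\tt GAP} computation used throughout Sections~\ref{subsec:triangles} and~\ref{subsec:cc} rather than by a conceptual argument.
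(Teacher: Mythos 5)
Your proposal is correct, and for the first half it takes a genuinely different route from the paper. The paper disposes of the whole proposition with ``by direct computation'': having the finite sets $L_u\subset \SY_u$ and $F_u\subset\SY_u$ explicitly, it checks on the lattice which pairs give $J_\alpha$, matches them against the reducible members $[l]+[l\sprime]$ of each pencil, and gets both transitivity claims by running the $W(E_7)$-action in {\tt GAP} exactly as you describe. You instead prove the characterization conceptually: $\intfY{l,c}=0$ forces the irreducible $(-2)$-curve $\eta_u^*(l)$ to be vertical for $\phi_{[c]}$, hence contained in one of the six $\typeI_2$ fibers of Proposition~\ref{prop:Ells}, which are precisely the pull-backs of the six reducible members of $[\barc]$; together with your direct computation of the converse direction (which is correct: the row sum $\intfY{l,c}+\intfY{l,c\sprime}=2$ and column sum both equal $2$, so a single zero entry pins down $J_\alpha$) this gives a clean geometric proof where the paper offers only a computer check. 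What your approach buys is an explanation of \emph{why} $J_\alpha$ occurs, at the cost of invoking the generality of $u$ through Propositions~\ref{prop:Rats}--\ref{prop:Ells}. Two small points: your argument that the $12$ bitangents occurring in the six reducible members of a fixed family are pairwise distinct only treats the case where the two members share the \emph{same} $Y$-lift $l$; to exclude $\barl$ appearing via the two different lifts $l$ and $\iota_u(l)$ in two distinct members, use that the pencil $[c]$ is base-point free (Proposition~\ref{prop:nobasepts}), so distinct members are disjoint, whereas $\intfY{l,\iota_u(l)}=2$. This is harmless since you verify the counts $756+1008=28\cdot 63$ computationally anyway. Finally, note that the transitivity step cannot be made conceptual here any more than in the paper: it rests on Theorem~\ref{thm:monodromy} plus the same finite $W(E_7)$-orbit computation, so on that half you and the paper coincide.
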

\section{Intersection graph}\label{sec:intgraph}
\begin{definition}
An \emph{intersection graph} is a pentad
$(V_{\barl}, V_{\barc}, T, E_{\barc\barc}, E_{\barl\barc})$
such that
\begin{itemize}
\item $V_{\barl}$ and  $V_{\barc}$ are finite sets, 
\item $T$ is a subset of $S^3_0(V_{\barl})$, 
\item $E_{\barc\barc}$ is a map $S^2(V_{\barc})\to \{A, B, C\}$, and 
\item $E_{\barl\barc}$ is a map $V_{\barl} \times V_{\barc} \to \{\alpha, \beta\}$.
\end{itemize}
Two intersection graphs
$(V_{\barl}, V_{\barc}, T, E_{\barc\barc}, E_{\barl\barc})$
and
$(V\sprime_{\barl}, V\sprime_{\barc}, T\sprime, E\sprime_{\barc\barc}, E\sprime_{\barl\barc})$
are \emph{isomorphic} if there exists a pair of  bijections
$V_{\barl}\cong V\sprime_{\barl}$ and 
$V_{\barc}\cong V\sprime_{\barc}$ that induces  
$T\cong T\sprime$, 
$E_{\barc\barc}\cong E\sprime_{\barc\barc}$, 
and 
$E_{\barl\barc}\cong E\sprime_{\barl\barc}$.
\end{definition}
\begin{definition}
For a $\Qmn$-curve $Z$ as in~\eqref{eq:Z}, 
we define an intersection  graph 
\[
g(Z):=(V_{\barl}, V_{\barc}, T, E_{\barc\barc}, E_{\barl\barc})
\]
by the following:
\begin{itemize}
\item $V_{\barl}$ is $\{\barl_1, \dots, \barl_m\}$ and $V_{\barc}$ is $\{\barc_1, \dots, \barc_n\}$,
\item $T$ is the set of liftable triangles 
$\{\barl_i, \barl_j, \barl_k\}\subset \{\barl_1, \dots, \barl_m\}$,
\item $E_{\barc\barc}(\barc_i, \barc_j)$ is the type of the matrix 
$I([\barc_i], [\barc_j])$ defined in Section~\ref{subsec:cc}, and
\item $E_{\barl\barc}(\barl_i, \barc_j)$ is the type of the matrix 
$J(\barl_i, [\barc_j])$ defined in Section~\ref{subsec:lc}. 
\end{itemize}
\end{definition}
\begin{remark}\label{rem:equivrelA}
By Proposition~\ref{prop:c1c2},
the relation 
\[
\barc_i\sim\barc_j \Longleftrightarrow E_{\barc\barc}(\barc_i, \barc_j)=A
\]
is an equivalence relation on $V_{\barc}$,
and the functions $E_{\barc\barc}$ and $E_{\barl\barc}$ are compatible with this  equivalence relation.
\end{remark}
\begin{remark}
When $n=0$, the  intersection graph equal to the two-graph  in~\cite{BannaiOhno2020}.
\end{remark}
It is obvious that, if $\zeta$ and $\zeta\sprime$ are in the same connected component 
of $\ZZZ\spmn$, 
the intersection  graphs $g(Z_\zeta)$ and $g(Z_{\zeta\sprime})$ are isomorphic.
The converse is not true in general,
as examples in  the next section show.
\section{Examples}\label{sec:examples}
\subsection{The case $(m, n)=(6, 0)$}\label{subsec:m6n0}
We have  $|P_b^{(6, 0)}|=376740$. 
The action of $W(E_7)$  decomposes  $P_b^{(6, 0)}$ into orbits 
as in Table~\ref{table:Pbm6n0}.
For each orbit $o_i\subset P_b^{(6, 0)}$,
we choose a point $\zeta\in o_i$ and 
indicate the following data of the intersection graph $g(Z_{\zeta})$
of $Z_{\zeta}=Q_u+\barl_1+\dots+\barl_6$:
$|T|=k$ is the number    of the liftable triangles $t_1, \dots, t_k$ in $\{\barl_1, \dots, \barl_6\}$,
and $a_{\nu}$ is the number of 
pairs $\{t_i, t_j\}$ of liftable triangles such that $|t_i\cap t_j|=\nu$.
The orbit $o_1$ and $o_2$ cannot be distinguished by the two-graph $(V_{\barl}, T)$,
but they belong to different $W(E_7)$-orbits,
and hence the corresponding $\Qtype^{(6,0)}$-curves are of different
homeomorphism types. 
\begin{table}
{\small
\[
\begin{array}{c|ccccc}
i & |o_i| & |T| &  a_0 & a_1 & a_2  \\
\hline
1 & 2016 & 0 & 0 & 0 & 0 \\ 
2 & 1008 & 0 & 0 & 0 & 0 \\ 
3 & 30240 & 4 & 0 & 0 & 6 \\ 
4 & 60480 & 6 & 0 & 6 & 9 \\ 
5 & 22680 & 8 & 2 & 10 & 16 \\ 
6 & 181440 & 8 & 2 & 14 & 12 \\ 
7 & 5040 & 8 & 4 & 12 & 12 \\ 
8 & 12096 & 10 & 0 & 30 & 15 \\ 
9 & 60480 & 10 & 2 & 24 & 19 \\ 
10 & 1260 & 12 & 6 & 30 & 30 \\ 
\end{array}
\]
%
%
}
\caption{The orbit decomposition for $(m, n)=(6, 0)$}\label{table:Pbm6n0}
\end{table}
\subsection{The case $n=0$}\label{subsec:n0}
We continue to consider the case where $n=0$.
From the two-graph $g=(V_{\barl}, T)$,
we can construct a graph $\tilde{g}$ whose set of vertices is $T$ and
whose edge connecting $t_{\mu}, t_{\nu}\in T$ has weight $|t_{\mu}\cap t_{\nu}|$.
If the graphs  $\tilde{g}$ and  $\tilde{g}\sprime$ 
are not isomorphic as graphs with weighted edges,
then the two-graphs $g$ and $g\sprime$ are not isomorphic.
Using this method, we prove the following:
\begin{proposition}
Except for the two orbits $o_1$ and $o_2$ in the case $m=6$ described in Section~\ref{subsec:m6n0},
all $W(E_7)$-orbits  of   $P_b^{(m, 0)}$ are distinguished by their two-graphs.
\qed
\end{proposition}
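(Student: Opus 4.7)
The plan is to verify the claim by direct computation, extending the $W(E_7)$-orbit enumeration that was already used to obtain the numbers $N^{(m,0)}$ in Table~\ref{table:Nm0}. The input data, already assembled in Section~\ref{subsec:computationalexample}, consists of the permutation representation of $W(E_7)$ on the $56$-element set $\barL$ and the set $\barR_b$ of $315$ tetrads provided by Corollary~\ref{cor:tetra}.

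First, for each $m$ with $2\le m\le 27$, I would list representatives of all $W(E_7)$-orbits $o_i\subset P_b^{(m,0)}=S_0^m(\barL)$ using the standard generators of $W(E_7)$ acting on $\barL$, as in Section~\ref{subsec:computationalexample}. For each representative $\zeta=\{\barl_{i_1},\dots,\barl_{i_m}\}$ I would compute the two-graph $g(\zeta)=(V_{\barl},T)$ from Corollary~\ref{cor:tetra}: a triple $\{\barl_i,\barl_j,\barl_k\}\subset V_{\barl}$ lies in $T$ precisely when it is contained in some tetrad of $\barR_b$. From $g$ I would then assemble the auxiliary labelled graph $\tilde g$ with vertex set $T$ and with edge $\{t_\mu,t_\nu\}$ carrying the label $|t_\mu\cap t_\nu|\in\{0,1,2\}$; since $g$ determines $\tilde g$ up to isomorphism, distinct $\tilde g$'s force distinct $g$'s.

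Second, for each fixed $m$ I would compare, across all pairs of $W(E_7)$-orbits at that $m$, the cheap numerical invariants $(|T|,a_0,a_1,a_2)$ exemplified in Table~\ref{table:Pbm6n0}, where $a_\nu$ counts unordered pairs of triangles in $T$ meeting in exactly $\nu$ bitangents. When these invariants already separate two orbits we are done; otherwise I would refine by the weighted degree sequence of $\tilde g$, and, only if this still fails, by a direct graph-isomorphism test. The expected outcome is that for every $m\ne 6$ the tuple $(|T|,a_0,a_1,a_2)$ alone separates all orbits, whereas at $m=6$ the orbits $o_1$ and $o_2$, of sizes $2016$ and $1008$ respectively, both have $|T|=0$; their two-graphs are therefore the empty graph on $6$ vertices and are tautologically isomorphic, exactly reproducing the discrepancy $N^{(6,0)}-G^{(6,0)}=1$ in Table~\ref{table:Nm0}.

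The main obstacle is the size of the case analysis at intermediate $m$: at $m=14$ there are $N^{(14,0)}=103$ orbits, hence on the order of $5\cdot 10^3$ orbit pairs to dispatch, and for $m>14$ an independent run is required because the two-graph of a subset of $\barL_b$ is not recoverable from the two-graph of its complement, so the symmetry $N^{(m,0)}=N^{(28-m,0)}$ cannot be used to halve the workload. In practice the partition of orbits by $|T|$ is already very fine, which keeps the number of candidate pairs that need the refined $\tilde g$ invariant or a direct isomorphism test very small. The entire verification is carried out in GAP~\cite{GAP}, reusing the permutation representation of $W(E_7)$ on $\barL_b$ from Section~\ref{subsec:computationalexample}.
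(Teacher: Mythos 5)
Your proposal is correct and takes essentially the same route as the paper: a direct \texttt{GAP} computation of the $W(E_7)$-orbits on $S_0^m(\barL)$, the two-graphs $T$ read off from the $315$ tetrads, and the auxiliary weighted graph $\tilde g$ on vertex set $T$ used to separate orbits with identical cruder invariants. One caveat: your expectation that $(|T|,a_0,a_1,a_2)$ alone suffices whenever $m\ne 6$ is too optimistic --- the paper's own example at $m=22$ has two orbits with $|T|=600$ that are only separated by a \emph{cubic} invariant (the number of triples of pairwise disjoint liftable triangles, $8203640$ versus $8203760$), so the fallback to finer invariants of $\tilde g$ or a full isomorphism test that you build into your plan is genuinely needed, not just a safety net.
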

\begin{example}
Let  $o_1\sprime$ and $o_2\sprime$ be the orbits in $ P_b^{(22, 0)}$ 
containing $22$-tuples  
obtained by taking the complement in $\barL_b$ of  $6$-tuples in 
the orbits $o_1\subset P_b^{(6, 0)}$ and $o_2\subset P^{(6, 0)}$ above, respectively.
Let $g_1\sprime$ and $g_2\sprime$ be the two-graphs of $o_1\sprime$ and $o_2\sprime$.
We have $|T|=600$ for both $g_1\sprime$ and $g_2\sprime$.
The associated graphs $\tilde{g}\sprime_1$ and $\tilde{g}\sprime_2$
with weighted edges are not isomorphic.
The graph $\tilde{g}\sprime_1$ has exactly $8203640$ triples $\{t_{\lambda}, t_{\mu}, t_{\nu}\}$ 
of liftable triangles with weight   $|t_{\lambda}\cap t_{\mu}|=|t_{\mu}\cap t_{\nu}| =|t_{\nu}\cap t_{\lambda}|=0$,
whereas  the number of such triples in $\tilde{g}\sprime_2$ is $8203760$.
\end{example}
%
%
\subsection{The case $(m, n)=(0, 3)$}\label{subsec:m0n3}
By Remark~\ref{rem:equivrelA},
the three edges of the graph $(V_{\barc}, E_{\barc\barc})$  are labelled as in the second column of Table~\ref{table:m0n3}.
The set $P^{(0, 3)}_b$ of size $43680$ is decomposed  into nine $W(E_7)$-orbits 
with  sizes given  in the third column of Table~\ref{table:m0n3}.
\begin{table}
{\small
\[
\begin{array}{c|c|l}
i & \textrm{edge labels}  & \textrm{orbit sizes} \\
\hline 
1 & AAA & 63\\
2 & ABB &1890 \\
3 &ACC & 2016 \\
4 &BBB & 3780+315 \\
5 &BBC & 15120\\
6 &BCC & 15120\\
7 & CCC & 5040 +336
\end{array}
%
\]
}
\caption{The orbit decomposition for $(m, n)=(0,3)$}\label{table:m0n3}
\end{table}
\begin{table}
{\small
\[
\begin{array}{c|c|c|l}
i & E_{\barl\barc} & E_{\barc\barc} & \textrm{orbit sizes} \\
\hline
1 & [ [ \alpha, \alpha ], [ \alpha, \alpha ] ] & A & 3780+378  \\ 
2 & [ [ \alpha, \alpha ], [ \alpha, \alpha ] ] & B & 3780+1890  \\ 
3 & [ [ \alpha, \alpha ], [ \alpha, \alpha ] ] & C & 15120  \\
4 & [ [ \alpha, \alpha ], [ \alpha, \beta ] ] & B & 60480  \\
5 & [ [ \alpha, \alpha ], [ \alpha, \beta ] ] & C & 60480+12096  \\
6 & [ [ \alpha, \alpha ], [ \beta, \beta ] ] & A & 12096  \\
7 & [ [ \alpha, \alpha ], [ \beta, \beta ] ] & B & 30240  \\
8 & [ [ \alpha, \alpha ], [ \beta, \beta ] ] & C & 60480  \\
9 & [ [ \alpha, \beta ], [ \alpha, \beta ] ] & B & 45360+7560  \\
10 & [ [ \alpha, \beta ], [ \alpha, \beta ] ] & C & 30240  \\
11 & [ [ \alpha, \beta ], [ \beta, \alpha ] ] & B & 60480  \\
12 & [ [ \alpha, \beta ], [ \beta, \alpha ] ] & C & 30240+ 6048  \\
13 & [ [ \alpha, \beta ], [ \beta, \beta ] ] & B & 120960  \\
14 & [ [ \alpha, \beta ], [ \beta, \beta ] ] & C & 120960  \\
15 & [ [ \beta, \beta ], [ \beta, \beta ] ] & A & 7560  \\
16 & [ [ \beta, \beta ], [ \beta, \beta ] ] & B & 22680+ 3780  \\
17 & [ [ \beta, \beta ], [ \beta, \beta ] ] & C & 45360  
\end{array}
\]
}
\caption{The orbit decomposition for $(m, n)=(2,2)$}\label{table:m2n2}
\end{table}
\subsection{The case $(m, n)=(2,2)$ }\label{subsec:m2n2}
There exist $17$ intersection graphs indicated in Table~\ref{table:m2n2},
where $E_{\barc\barc}$ is shown  by the type of  $I([\barc_1], [\barc_2])$,
and 
\[
E_{\barl\barc}:=[[J(\barl_1, [\barc_1]), J(\barl_1,  [\barc_2])],[J(\barl_2,  [\barc_1]), J(\barl_2,  [\barc_2])]].
\]
The set $P^{(2, 2)}_b$ of size $762048$ is decomposed  into $23$ orbits by the action of $W(E_7)$,
and their sizes are given  in the $4$th column of Table~\ref{table:m2n2}.
\bibliographystyle{plain}
\bibliography{myrefsQZM}

\begin{thebibliography}{10}

\bibitem{Artal1994}
Enrique Artal-Bartolo.
\newblock Sur les couples de {Z}ariski.
\newblock {\em J. Algebraic Geom.}, 3(2):223--247, 1994.

\bibitem{ABCT2008}
Enrique Artal~Bartolo, Jos\'{e}~Ignacio Cogolludo, and Hiro-o Tokunaga.
\newblock Pencils and infinite dihedral covers of {$\Bbb P^2$}.
\newblock {\em Proc. Amer. Math. Soc.}, 136(1):21--29, 2008.

\bibitem{Survey2008}
Enrique Artal~Bartolo, Jos\'{e}~Ignacio Cogolludo, and Hiro-o Tokunaga.
\newblock A survey on {Z}ariski pairs.
\newblock In {\em Algebraic geometry in {E}ast {A}sia---{H}anoi 2005},
  volume~50 of {\em Adv. Stud. Pure Math.}, pages 1--100. Math. Soc. Japan,
  Tokyo, 2008.

\bibitem{Bannai2016}
Shinzo Bannai.
\newblock A note on splitting curves of plane quartics and multi-sections of
  rational elliptic surfaces.
\newblock {\em Topology Appl.}, 202:428--439, 2016.

\bibitem{BannaiOhno2020}
Shinzo Bannai and Momoko Ohno.
\newblock Two-graphs and the embedded topology of smooth quartics and its
  bitangent lines.
\newblock {\em Canad. Math. Bull.}, 63(4):802--812, 2020.

\bibitem{BannaiEtAl2019}
Shinzo Bannai, Hiro-o Tokunaga, and Momoko Yamamoto.
\newblock A note on the topology of arrangements for a smooth plane quartic and
  its bitangent lines.
\newblock {\em Hiroshima Math. J.}, 49(2):289--302, 2019.

\bibitem{DolgachevBook}
Igor~V. Dolgachev.
\newblock {\em Classical algebraic geometry. A modern view}.
\newblock Cambridge University Press, Cambridge, 2012.

\bibitem{GBS2017}
Beno\^{\i}t Guerville-Ball\'{e} and Taketo Shirane.
\newblock Non-homotopicity of the linking set of algebraic plane curves.
\newblock {\em J. Knot Theory Ramifications}, 26(13):1750089, 13, 2017.

\bibitem{Harris1979}
Joe Harris.
\newblock Galois groups of enumerative problems.
\newblock {\em Duke Math. J.}, 46(4):685--724, 1979.

\bibitem{Kondo2000}
Shigeyuki Kondo.
\newblock A complex hyperbolic structure for the moduli space of curves of
  genus three.
\newblock {\em J. Reine Angew. Math.}, 525:219--232, 2000.

\bibitem{Kuwata2005}
Masato Kuwata.
\newblock Twenty-eight double tangent lines of a plane quartic curve with an
  involution and the {M}ordell-{W}eil lattices.
\newblock {\em Comment. Math. Univ. St. Pauli}, 54(1):17--32, 2005.

\bibitem{PSV2011}
Daniel Plaumann, Bernd Sturmfels, and Cynthia Vinzant.
\newblock Quartic curves and their bitangents.
\newblock {\em J. Symbolic Comput.}, 46(6):712--733, 2011.

\bibitem{XRAtlas2021}
Xavier Roulleau.
\newblock An atlas of {K}3 surfaces with finite automorphism group, 2021.
\newblock arXiv:2003.08985v2.

\bibitem{MSBook2019}
Matthias Sch\"{u}tt and Tetsuji Shioda.
\newblock {\em Mordell-{W}eil lattices}, volume~70 of {\em Ergebnisse der
  Mathematik und ihrer Grenzgebiete. 3. Folge.}
\newblock Springer, Singapore, 2019.

\bibitem{Shimada2003}
Ichiro Shimada.
\newblock Equisingular families of plane curves with many connected components.
\newblock {\em Vietnam J. Math.}, 31(2):193--205, 2003.

\bibitem{Shimada2010}
Ichiro Shimada.
\newblock Lattice {Z}ariski {$k$}-ples of plane sextic curves and
  {$Z$}-splitting curves for double plane sextics.
\newblock {\em Michigan Math. J.}, 59(3):621--665, 2010.

\bibitem{ShimadaHessian2019}
Ichiro Shimada.
\newblock On an {E}nriques surface associated with a quartic {H}essian surface.
\newblock {\em Canad. J. Math.}, 71(1):213--246, 2019.
\newblock Corrigendum: \emph{Canad. J. Math.}, 74(2): 603--605, 2022.

\bibitem{Shirane2017}
Taketo Shirane.
\newblock A note on splitting numbers for {G}alois covers and
  {$\pi_1$}-equivalent {Z}ariski {$k$}-plets.
\newblock {\em Proc. Amer. Math. Soc.}, 145(3):1009--1017, 2017.

\bibitem{Shirane2019}
Taketo Shirane.
\newblock Galois covers of graphs and embedded topology of plane curves.
\newblock {\em Topology Appl.}, 257:122--143, 2019.

\bibitem{GAP}
{The GAP Group}.
\newblock {\em {G}{A}{P} - {G}roups, {A}lgorithms, and {P}rogramming}.
\newblock Version 4.11.0 (\url{http://www.gap-system.org}).

\end{thebibliography}

\end{document}